\documentclass[10pt,reqno]{amsart}
\usepackage[a4paper,top=2.6cm, bottom=2.6cm,left=2.6cm, right=2.6cm,twoside]{geometry}


\usepackage{amsthm}
 \usepackage{amsmath}
 \usepackage{amsfonts}
 \usepackage{amssymb}
 \usepackage{amscd}
 \usepackage{url}
 \usepackage[T1]{fontenc}
 \usepackage[utf8x]{inputenc}
 \usepackage[english]{babel}
 \usepackage[mathscr]{eucal}
 \usepackage{lmodern}
 \usepackage{bera}
 \usepackage{float} 
 \usepackage{bold-extra}
 \usepackage{textcomp}
 \usepackage{graphicx}
 \usepackage{caption}
\usepackage{subcaption} 
\usepackage{multicol}
 \usepackage[dvipsnames,table]{xcolor}
 \usepackage{mathtools} 
 \usepackage{enumitem}
 \usepackage{tikz} 
 \usepackage{tikz-cd}
 \usepackage{wasysym}
 \usepackage{todonotes}
 \usepackage{fancybox}
 \usepackage{fancyhdr}
 \usepackage{breqn}
 \usepackage[nodisplayskipstretch]{setspace}
\usepackage{hyperref}
\hypersetup{
	colorlinks=true,
	linkcolor=blue,
	filecolor=darkmagenta,      
	urlcolor=cyan,
	citecolor=red
}


 \theoremstyle{definition}  
  \newtheorem{defn}{Definition}[section]

   \newtheorem{rmk}[defn]{Remark}

  \theoremstyle{plain}  
  \newtheorem{thm}[defn]{Theorem}
  \newtheorem{lem}[defn]{Lemma}
  \newtheorem{prop}[defn]{Proposition}
  \newtheorem{cor}[defn]{Corollary}

  \theoremstyle{remark}


 \renewcommand{\sf}[1]{\textsf{#1}}

 \newcommand{\mbb}[1]{\mathbb{#1}}
 \newcommand{\mcl}[1]{\mathcal{#1}}

 \newcommand{\msc}[1]{\mathscr{#1}}

 \newcommand{\norm}[1]{\left\lVert#1\right\rVert}

 \newcommand{\M}[1]{\mbb{M}_{#1}}

 \newcommand{\B}[1]{\msc{B}({#1})}

 \newcommand{\ranko}[2]{|{#1}\rangle\langle{#2}|}

 \newcommand{\ip}[1]{\langle#1\rangle}

 \newcommand{\ran}[1]{\sf{range}(#1)}
 
 \renewcommand{\ker}[1]{\sf{ker}(#1)}

 \newcommand{\Matrix}[1]{\begin{bmatrix}#1\end{bmatrix}}

 \DeclareMathOperator{\T}{\sf{T}}
 \DeclareMathOperator{\tr}{\sf{tr}}
 
 \DeclareMathOperator{\id}{\sf{id}}

 \DeclareMathOperator{\lspan}{\sf{span}}

 \numberwithin{equation}{section}
 \allowdisplaybreaks[4] 
 \setlist[enumerate]{font=\upshape,noitemsep, topsep=0pt} 
 \setlist[itemize]{noitemsep, topsep=0pt}

\makeatletter
\@namedef{subjclassname@2020}{\textup{2020} Mathematics Subject Classification}
\makeatother

\linespread{1.25}

\title{On the rank of extremal marginal states}

\author{Repana Devendra}
\address{Repana Devendra, Department of Mathematics, Indian Institute of Technology Bombay, Powai, Mumbai 400076, India. }
\email{r.deva1992@gmail.com, ma16d020@smail.iitm.ac.in}

\author{Pankaj Dey}
 \address{Pankaj Dey, Department of Mathematics, Indian Institute of Technology Bombay, Powai, Mumbai 400076, India.}
 \email{pankajdey2022@gmail.com, pankaj@math.iitb.ac.in}

\author{Santanu Dey}
\address{Santanu Dey, Department of Mathematics, Indian Institute of Technology Bombay, Powai, Mumbai 400076, India. }
 \email{santanudey@iitb.ac.in, dey@math.iitb.ac.in}

\begin{document}

\date{\today}

\begin{abstract}
Let $\rho_1$ and $\rho_2$ be two states on $\mathbb{C}^{d_1}$ and $\mathbb{C}^{d_2}$ respectively. The marginal state space, denoted by $\msc{C}(\rho_1,\rho_2)$, is the set of all states $\rho$ on $\mathbb{C}^{d_1}\otimes \mathbb{C}^{d_2}$ with partial traces $\rho_1, \rho_2$. K. R. Parthasarathy established that if $\rho$ is an extreme point of $\msc{C}(\rho_1,\rho_2)$, then the rank of $\rho$ does not exceed $\sqrt{d_1^2+d_2^2-1}$. Rudolph posed a question regarding the tightness of this bound. In 2010, Ohno gave an affirmative answer by providing examples in low-dimensional matrix algebras $\M{3}$ and $\M{4}$. This article aims to provide a positive answer to the Rudolph question in various matrix algebras.
Our approaches, to obtain the extremal marginal states with tight upper bound, are based on Choi-Jamio\l kowski isomorphism and tensor product of extreme points.

\end{abstract}

\keywords{States, Completely positive maps, Extreme point, Choi-rank.}

\subjclass[2020]{46L30, 81P47, 47L07, 46M05, 46N50}

\maketitle


\section{Introduction}


Let $\mcl{H}_j$ denote a finite-dimensional Hilbert space that represents the quantum system $S_j$ for $j=1,2$. The combined quantum system $S_{12}$ is associated with the Hilbert space $\mcl{H}_1\otimes \mcl{H}_2$, which is the tensor product of the Hilbert spaces $\mcl{H}_1$ and $\mcl{H}_2$. A \emph{state} $\rho$ on $\mcl{H}$ is  a positive operator with unit trace. Let $\rho_j$ represent a state on $\mcl{H}_j$ for $j=1,2$. Consider the collection $\msc{C}(\rho_1,\rho_2)$, which includes all states $\rho$ in the coupled system $S_{12}$ such that the partial trace with respect to the quantum system $S_1$ yields $\rho_2$, and the partial trace with respect to the quantum system $S_2$ yields $\rho_1$. It is known that the set $\msc{C}(\rho_1,\rho_2)$ is a compact convex set. So, by the Krein-Milman theorem, it implies that this set $\msc{C}(\rho_1,\rho_2)$ is closed convex hull of its extreme points, which we denote as $\msc{E}(\rho_1,\rho_2)$. 

The necessary and sufficient conditions for a state to be an extreme point within the set  $\msc{C}(\rho_1,\rho_2)$ have been studied in \cite{LaSt93,krp98, Ru04}. One of the necessary conditions stipulates that the rank of the extreme point is constrained above by $\sqrt{d_1^2+d_2^2}$, where $d_j$ represents the dimension of the Hilbert space $\mcl{H}_j$ for $j=1,2$. Later in $2005$, K. R. Parthasarathy \cite{krp05} improved the upper bound of the rank to $\sqrt{d_1^2+d_2^2-1}$. Rudolph posed a question whether the upper bound on the rank given by K. R. Parthasarathy is sharp. That is, whether $\max\{\text{rank}(\rho): \rho \text{ is an extreme point of the set } \msc{C}(\rho_1,\rho_2)\}=\lfloor{\sqrt{d_1^2+d_2^2-1}}\rfloor$, with $\lfloor{a}\rfloor$ indicating the largest integer that does not exceed $a$.

It has been established in \cite{krp05} that when $d_1=d_2=2$, all extreme points of the set $\msc{C}(\rho_1, \rho_2)$ are pure, which is equivalent to having a rank of one. This proves that the upper bound on the rank of these extreme points is not sharp. In \cite{PrSa07}, it is conjectured that every extreme point of the set $\msc{C}(\frac{I_{d_1}}{d_1}, \frac{I_{d_2}}{d_2})$ is pure for all $d_1 = d_2 \geq 3$. However, Ohno \cite{Ohno10} provided counterexamples demonstrating the existence of extreme points that are not pure. Furthermore, he examined the sharpness of the upper bound in lower-dimensional matrix algebras, proving that for $d_1 = d_2 = 3$ and $d_1 = d_2 = 4$, the upper bound $\lfloor{\sqrt{d_1^2 + d_2^2 - 1}}\rfloor$ is indeed sharp. This indicates that there exist extreme points $\rho$ in the set $\msc{C}(\rho_1, \rho_2)$ with a rank of $\lfloor \sqrt{d_1^2 + d_2^2 - 1} \rfloor$ when $d_1 = d_2 = 3$ and $d_1 = d_2 = 4$. However, the question posed by Rudolph remains open for matrix algebras $\M{d}$ where $d\geq 5$.

Regarding the case when $d_1\neq d_2$, to the best of our knowledge, the only scenario that has been investigated yet is $d_1 = 2, d_2 = 3$. It is shown in \cite{KS21} that the upper bound on the rank given by K. R. Parthasarathy is sharp for $d_1 = 2, d_2 = 3$. That is, there exists an extreme point $\rho$ in the set $\msc{C}(\rho_1, \rho_2)$ with rank $\lfloor \sqrt{2^2 + 3^2 - 1} \rfloor=3$.

This article aims to demonstrate that the upper bound is indeed sharp for various cases. Most explicitly, we establish that $\max\{\text{rank}(\rho): \rho \text{ is an extreme point of the set } \msc{C}(\rho_1,\rho_2)\}=\lfloor{\sqrt{d_1^2+d_2^2-1}}\rfloor$ holds true for the following instances:
\begin{enumerate}[label=(\roman*)]
\item $d_1=d_2=5$.
\item $d_1=d_2=9$.
\item $d_1=d_2=12$.
\item $d_1=d_2=5k;\quad 3\leq k\leq 14$.
\item $d_1=3$ and $d_2=4$.
\item $d_1=2$ and $d_2\geq 4$.
\end{enumerate}
Additionally, we present an example of an extreme point with rank $d_1+1$ for the case where $(d_1,d_2)=(d_1,d_1+1)$, which is strictly less than $\lfloor{\sqrt{d_1^2+d_2^2-1}}\rfloor$ for all $d_1\geq 4$.

Our approach to prove the above cases is using Choi-Jamio\l kowski isomorphism. The structure of the article is as follows. In Section \ref{pre}, we recall a few definitions and preliminary results from the literature that are needed in this paper. We study tensor product of extreme points in Section \ref{tensor product}. This is the key idea that is used in subsequent sections. Section \ref{sharpness for d_1=d_2} demonstrates various classes of extremal marginal states that attain the bound given by K. R. Parthasarathy for the cases when $d_1=d_2$. Most explicitly, we study the cases when $(d_1,d_2)=(5,5), (9,9), (12,12),(5k,5k)$ where $3\leq k\leq 14$. Section \ref{section 5} contains the similar study for $d_1\neq d_2$. Most precisely, we discuss the cases when $(d_1,d_2)=(3,4), (2,d)$ where $d\geq 4$. This section also contains an example of an extreme point with rank $d+1$ for the case when $(d_1,d_2)=(d,d+1)$ with $d\geq 2$. We conclude this paper with appendix in Section \ref{appendix}.

\section{Preliminaries}\label{pre}

Let $\mbb{C}^d$ represent the $d$-dimensional complex Hilbert space, where the inner product is defined to be linear in the second variable and conjugate linear in the first. We denote $\B{ \mbb{C}^{d_1},\mbb{C}^{d_2}}$ as the space of all linear maps from $\mbb{C}^{d_1}$ to $\mbb{C}^{d_2}$. The standard orthonormal basis of $\mbb{C}^d$ is represented by the set $\{e_i^{(d)}: 1\leq i\leq d\}$. We define $\M{d_1\times d_2}$ as the algebra of complex matrices of size $d_1\times d_2$, and for convenience, we use $\M{d}$ to refer to $\M{d\times d}$. Each element $A$ in $\B{\mbb{C}^{d_1},\mbb{C}^{d_2}}$ can be associated with its $d_2\times d_1$ matrix corresponding to the standard bases of $\mbb{C}^{d_1}$ and $\mbb{C}^{d_2}$. Consequently, we interpret each matrix $A\in\M{d_1\times d_2}$ as representing a linear map from $\mbb{C}^{d_2}$ to $\mbb{C}^{d_1}$ and vice versa. The identity matrix in $\M{d}$ is denoted as $I=I_d$. Additionally, we define $\T=\T_d: \M{d}\to \M{d}$ as the transpose operation, $\tr=\tr_{d}: \M{d}\to \mbb{C}$ as the trace operation, and $\id=\id_d:\M{d}\to\M{d}$ as the identity operation. By writing $A=[a_{ij}]\in\M{d_1\times d_2}$, it indicates that $A$ is a complex matrix of dimensions $d_1\times d_2$, with the entry in the $(i,j)^{th}$ position represented by $a_{ij}$, for all indices satisfying $1\leq i\leq d_1$ and $1\leq j\leq d_2$. Furthermore, if $A=[a_{ij}]\in\M{d_1\times d_2}$, the adjoint of $A$, denoted as $A^*$, is a matrix of size $d_2\times d_1$. This adjoint is defined as the conjugate transpose of $A$, specifically expressed as $A^*=[b_{ij}]\in\M{d_2\times d_1}$, where each entry $b_{ij}$ is given by $\overline{a_{ji}}$ for all $1\leq i\leq d_2$ and $1\leq j\leq d_1$.

A matrix $A\in\M{d}$ is said to be \textit{Hermitian} if $A=A^*$ and a Hermitian matrix $A$ is called \textit{positive (semi-definite)} if $\ip{x,Ax}\geq 0$ for all $x\in \mbb{C}^d$. The collection of all positive matrices within $\M{d}$ is denoted by $\M{d}^+$. If $A\in\M{d}^+$, then we write $A\geq 0$. Let $x\in\mbb{C}^{d_1}$ and $y\in\mbb{C}^{d_2}$. We define the mapping $\ranko{x}{y}:\mbb{C}^{d_2}\to\mbb{C}^{d_1}$ by the expression $\ranko{x}{y}(z):=\ip{y,z}x$ for every $z\in\mbb{C}^{d_2}$. It is important to observe that if $x=(x_1,x_2,\ldots,x_{d_1})^{\T}$ and $y=(y_1,y_2,\ldots,y_{d_2})^{\T}$, then the matrix representation of $\ranko{x}{y}$ with respect to the standard basis is given by $[x_i\overline{y_j}]=xy^*\in\M{d_1\times d_2}$. For convenience, we also refer to this matrix as $\ranko{x}{y}$. It is straightforward to verify that $A\ranko{x}{y}B=\ranko{Ax}{B^*y}$ holds for any $x\in\mbb{C}^{d_1}$, $y\in\mbb{C}^{d_2}$, and matrices $A\in\M{d_1}, B\in\M{d_2}$. Additionally, it is noteworthy that the matrix $\ranko{x}{x}$ is positive for all $x\in\mbb{C}^{d}$.

Let $\mbb{C}^{d_1}\otimes\mbb{C}^{d_2}$ denote the tensor product of the Hilbert spaces $\mbb{C}^{d_1}$ and $\mbb{C}^{d_2}$. For operators $A\in\B{\mbb{C}^{d_1}}$ and $B\in\B{\mbb{C}^{d_2}}$, there exists a unique operator $A\otimes B\in\B{\mbb{C}^{d_1}\otimes \mbb{C}^{d_2}}$ such that for all vectors $x\in\mbb{C}^{d_1}$ and $y\in\mbb{C}^{d_2}$, the relation $A\otimes B(x\otimes y)=Ax\otimes By$ holds. If $\{u_j\}_{j=1}^{d_1}$ and $\{v_j\}_{j=1}^{d_2}$ represent orthonormal bases for $\mbb{C}^{d_1}$ and $\mbb{C}^{d_2}$, respectively, then the set $\{u_i\otimes v_j\}_{i,j=1}^{d_1,d_2}$ forms an orthonormal basis for $\mbb{C}^{d_1}\otimes\mbb{C}^{d_2}$. We arrange the elements of the set $\{u_i\otimes v_j :1\leq i\leq d_1, 1\leq j\leq d_2\}$ in the lexicographic order, defined by the condition $(i, j) < (i',j')$ if $i < i'$ or if $i = i'$ and $j < j'$. If $A=[a_{ij}]\in\M{d_1}$ and $B=[b_{ij}]\in\M{d_2}$, then according to this ordering, we have $A\otimes B=[a_{ij}B]$. This identification allows us to view $\M{d_1}\otimes\M{d_2}$ as $\M{d_1}(\M{d_2})$, treating the elements of $\M{d_1}\otimes\M{d_2}$ as block matrices, where each block corresponds to an entry from $A$ multiplied by the entire matrix $B$.

Given the linear maps $\Phi_1:\M{d_1}\to\M{d_2}$ and $\Phi_2:\M{d_3}\to\M{d_4}$, the tensor product map $\Phi_1\otimes\Phi_2:\M{d_1}\otimes\M{d_3}\to\M{d_2}\otimes\M{d_4}$ is uniquely determined by the property that $\Phi_1\otimes\Phi_2(A\otimes B)=\Phi_1(A)\otimes\Phi_2(B)$ for all elements $A\in\M{d_1}$ and $B\in\M{d_3}$.

\begin{defn}
	A linear  map $\Phi:\M{d_1}\to\M{d_2}$ is said to be 
	\begin{enumerate}[label=(\roman*)]
		\item \emph{Hermitian preserving} if $\Phi(X^*)=\Phi(X)^*$ for all $X\in\M{d_1}$.
		\item \emph{positive} if $\Phi(\M{d_1}^+)\subseteq \M{d_2}^+$.
		\item \emph{completely positive (CP)} if the map $\id_k\otimes\Phi:\M{k}\otimes\M{d_1}\to\M{k}\otimes\M{d_2}$ is a positive map for all $k\geq 1$.
		\item \emph{unital} if $\Phi(I_{d_1})=I_{d_2}$.
		\item \emph{trace-preserving } if $\tr(\Phi(X))=\tr(X)$ for all $X\in\M{d_1}$.
	\end{enumerate}
\end{defn}

The Hilbert-Schmidt inner product on the space $\M{d}$ is defined as $\ip{X, Y}:=\tr(X^*Y)$. This inner product enables the consideration of $\M{d}$ as a Hilbert space. Utilizing this inner product, we can define the dual $\Phi^*:\M{d_2}\to\M{d_1}$ of a linear map $\Phi:\M{d_1}\to\M{d_2}$ as the unique linear map that fulfills the condition $\tr({\Phi(X)}^*Y)=\tr(X^*\Phi^*(Y))$ for every $X\in\M{d_1}$ and $Y\in\M{d_2}$. Let $\B{\M{d_1},\M{d_2}}$ denote the space of all linear maps from $\M{d_1}$ to $\M{d_2}$.

In the realm of quantum information theory, the Choi-Jamio\l kowski isomorphism \cite{Cho75, Jam72}, commonly known as channel-state duality, illustrates the connection between quantum channels, characterized by completely positive maps, and quantum states, which are expressed through density matrices.

\begin{thm}[Choi-Jamio\l kowski \cite{Cho75, Jam72}]\label{thm-C-J-map}
	Let $\msc{J}$ be the linear map from $\B{\M{d_1},\M{d_2}}\to\M{d_1}\otimes\M{d_2}$ defined by 
	$$\Phi\mapsto (\id_{d_1}\otimes \Phi)(\ranko{\Omega_{d_1}}{\Omega_{d_1}}),$$
	where $\Omega_{d_1}=\sum_{j=1}^{d_1}e_j^{(d_1)}\otimes e_j^{(d_1)}$. Then we have the following:
	
	\begin{enumerate}
		\item $\msc{J}$ is a vector space isomorphism. 
		\item $\Phi$ is Hermitian preserving if and only if $\msc{J}(\Phi)$ is Hermitian.
		\item $\Phi$ is positive map if and only if $\msc{J}(\Phi)$ is positive on simple tensor vectors, that is, $\ip{x\otimes y, \msc{J}(\Phi)x\otimes y}\geq 0$ for all $x\in\mbb{C}^{d_1}$ and $y\in\mbb{C}^{d_2}$. 
		\item $\Phi$ is CP if and only if $\msc{J}(\Phi)$ is positive. 
	\end{enumerate}
\end{thm}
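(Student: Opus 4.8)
The plan is to first translate the definition of $\msc{J}$ into the standard ``Choi matrix'' form and then read off each of the four claims. Writing $E_{ij}:=\ranko{e_i^{(d_1)}}{e_j^{(d_1)}}$ for the matrix units of $\M{d_1}$, one has $\ranko{\Omega_{d_1}}{\Omega_{d_1}}=\sum_{i,j=1}^{d_1}E_{ij}\otimes E_{ij}$, hence
\[
\msc{J}(\Phi)=\sum_{i,j=1}^{d_1}E_{ij}\otimes\Phi(E_{ij});
\]
under the identification $\M{d_1}\otimes\M{d_2}=\M{d_1}(\M{d_2})$ this says that $\msc{J}(\Phi)$ is the block matrix whose $(i,j)$-block equals $\Phi(E_{ij})$. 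Claim (1) is then immediate: $\msc{J}$ is linear because $\Phi\mapsto\id_{d_1}\otimes\Phi$ is, and the passage from a block matrix $C=[C_{ij}]$ back to the unique linear map sending $E_{ij}$ to $C_{ij}$ is an explicit two-sided inverse; equivalently, $\dim\B{\M{d_1},\M{d_2}}=d_1^2d_2^2=\dim(\M{d_1}\otimes\M{d_2})$ and the block description shows $\msc{J}$ is injective.

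For (2) I would use $E_{ij}^*=E_{ji}$ to get $\msc{J}(\Phi)^*=\sum_{i,j}E_{ji}\otimes\Phi(E_{ij})^*$: if $\Phi$ is Hermitian preserving the right-hand side is $\sum_{i,j}E_{ji}\otimes\Phi(E_{ji})=\msc{J}(\Phi)$, while conversely Hermiticity of $\msc{J}(\Phi)$ forces $\Phi(E_{ij})^*=\Phi(E_{ji})$ for all $i,j$, and extending by linearity over $X=\sum_{i,j}x_{ij}E_{ij}$ gives $\Phi(X^*)=\Phi(X)^*$. For (3) the crucial computation is that, for $x\in\mbb{C}^{d_1}$ and $y\in\mbb{C}^{d_2}$,
\[
\ip{x\otimes y,\,\msc{J}(\Phi)(x\otimes y)}=\sum_{i,j}\ol{x_i}\,x_j\,\ip{y,\Phi(E_{ij})y}=\ip{y,\,\Phi(\ranko{\ol{x}}{\ol{x}})\,y},
\]
where $\ol{x}=(\ol{x_1},\dots,\ol{x_{d_1}})^{\T}$ and I have used $\sum_{i,j}\ol{x_i}x_jE_{ij}=\ranko{\ol x}{\ol x}\geq 0$. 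If $\Phi$ is positive the right side is $\geq 0$ for all $x,y$; conversely, given $A\in\M{d_1}^+$ I would spectrally decompose $A=\sum_r\ranko{z_r}{z_r}$, set $w_r=\ol{z_r}$, and sum the displayed identity over $r$ to obtain $\ip{y,\Phi(A)y}=\sum_r\ip{w_r\otimes y,\msc{J}(\Phi)(w_r\otimes y)}\geq 0$ for every $y$, whence $\Phi(A)\geq 0$.

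Finally (4). If $\Phi$ is CP then (taking $k=d_1$ in the definition) $\id_{d_1}\otimes\Phi$ is positive, and since $\ranko{\Omega_{d_1}}{\Omega_{d_1}}\geq 0$ we get $\msc{J}(\Phi)\geq 0$. The content of the converse is Choi's construction: if $C:=\msc{J}(\Phi)\geq 0$, write $C=\sum_r\ranko{\xi_r}{\xi_r}$ with $\xi_r\in\mbb{C}^{d_1}\otimes\mbb{C}^{d_2}$, expand $\xi_r=\sum_{i=1}^{d_1}e_i^{(d_1)}\otimes v_{r,i}$, and let $V_r\in\B{\mbb{C}^{d_1},\mbb{C}^{d_2}}$ be the operator with $V_re_i^{(d_1)}=v_{r,i}$. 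Then $\ranko{\xi_r}{\xi_r}=\sum_{i,j}E_{ij}\otimes\ranko{v_{r,i}}{v_{r,j}}=\sum_{i,j}E_{ij}\otimes V_rE_{ij}V_r^*$, and comparing blocks with $\msc{J}(\Phi)=\sum_{i,j}E_{ij}\otimes\Phi(E_{ij})$ yields $\Phi(X)=\sum_rV_rXV_r^*$ for all $X$; this form is manifestly CP because $(\id_k\otimes\Phi)(Y)=\sum_r(I_k\otimes V_r)\,Y\,(I_k\otimes V_r)^*\geq 0$ for every $k$ and every $Y\in(\M{k}\otimes\M{d_1})^+$. I expect the only real obstacle to be this last step --- correctly installing the vectorization correspondence $\xi_r\leftrightarrow V_r$ between $\mbb{C}^{d_1}\otimes\mbb{C}^{d_2}$ and $\B{\mbb{C}^{d_1},\mbb{C}^{d_2}}$ and keeping the index bookkeeping straight so that the rank-one summands of the Choi matrix reassemble into a Kraus decomposition of $\Phi$; the remaining assertions are routine manipulations with matrix units.
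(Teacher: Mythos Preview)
Your proof is correct and complete. Note, however, that the paper does not supply its own proof of this theorem: it is stated in the preliminaries as a cited result from \cite{Cho75, Jam72}, so there is no in-paper argument to compare against. Your block-matrix rewriting $\msc{J}(\Phi)=\sum_{i,j}E_{ij}\otimes\Phi(E_{ij})$ followed by direct verification of each item is exactly the standard route taken in those references; in particular your handling of (4) via a rank-one decomposition of the Choi matrix and the vectorization $\xi_r\leftrightarrow V_r$ reproduces Choi's original argument and, as a bonus, immediately yields the Kraus form quoted in the paper's Corollary~\ref{cor-choi-kraus-rep}. One small cosmetic point: the paper's convention is $\mathrm{Ad}_V(X)=V^*XV$, so your Kraus operators would appear as $V_r^*$ in that notation, but this does not affect correctness.
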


\begin{cor}[Choi-Kraus \cite{Cho75,Kra71}]\label{cor-choi-kraus-rep}
	Let $\Phi:\M{d_1}\to\M{d_2}$ be a linear map. Then $\Phi$ is CP if and only if there exists $\{V_j\}_{j=1}^n\subseteq\M{d_1\times d_2}$ such that  
	\begin{align}\label{eq-Kraus-decompo}
		\Phi=\sum_{j=1}^n\mathrm{Ad}_{V_j}
	\end{align}
	where $\mathrm{Ad}_V(X):=V^*XV$ for all $X\in\M{d_1}$.
\end{cor}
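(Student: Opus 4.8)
The plan is to deduce the corollary directly from Theorem~\ref{thm-C-J-map}, the only computational ingredient being an explicit formula for $\msc{J}(\mathrm{Ad}_V)$. First I would set up the \emph{vectorization} map: for $V\in\M{d_1\times d_2}$, regarded as an operator $\mbb{C}^{d_2}\to\mbb{C}^{d_1}$, put
\[
z_V:=\sum_{i=1}^{d_1}e_i^{(d_1)}\otimes V^*e_i^{(d_1)}\ \in\ \mbb{C}^{d_1}\otimes\mbb{C}^{d_2}.
\]
The assignment $V\mapsto z_V$ is linear, and it is injective: if $z_V=0$, then, projecting onto the mutually orthogonal subspaces $e_i^{(d_1)}\otimes\mbb{C}^{d_2}$, we get $V^*e_i^{(d_1)}=0$ for every $i$, hence $V=0$. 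Since $\dim\M{d_1\times d_2}=d_1d_2=\dim\bigl(\mbb{C}^{d_1}\otimes\mbb{C}^{d_2}\bigr)$, this map is in fact a linear isomorphism, so every vector of $\mbb{C}^{d_1}\otimes\mbb{C}^{d_2}$ is of the form $z_V$ for a unique $V\in\M{d_1\times d_2}$.

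The key step is to show that $\msc{J}(\mathrm{Ad}_V)=\ranko{z_V}{z_V}$ for all $V\in\M{d_1\times d_2}$. To prove this I would expand $\ranko{\Omega_{d_1}}{\Omega_{d_1}}=\sum_{i,j}\ranko{e_i^{(d_1)}}{e_j^{(d_1)}}\otimes\ranko{e_i^{(d_1)}}{e_j^{(d_1)}}$, apply $\id_{d_1}\otimes\mathrm{Ad}_V$ summand by summand, and use the identity $A\ranko{x}{y}B=\ranko{Ax}{B^*y}$ with $A=V^*$ and $B=V$ to rewrite $V^*\ranko{e_i^{(d_1)}}{e_j^{(d_1)}}V=\ranko{V^*e_i^{(d_1)}}{V^*e_j^{(d_1)}}$. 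Then, using $\ranko{a}{b}\otimes\ranko{c}{d}=\ranko{a\otimes c}{b\otimes d}$ together with bilinearity of $\ranko{\cdot}{\cdot}$, the double sum collapses to $\ranko{\sum_i e_i^{(d_1)}\otimes V^*e_i^{(d_1)}}{\sum_j e_j^{(d_1)}\otimes V^*e_j^{(d_1)}}=\ranko{z_V}{z_V}$, as claimed.

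With this in hand both implications follow from Theorem~\ref{thm-C-J-map}(4). If $\Phi=\sum_{j=1}^n\mathrm{Ad}_{V_j}$, then linearity of $\msc{J}$ gives $\msc{J}(\Phi)=\sum_{j=1}^n\ranko{z_{V_j}}{z_{V_j}}\geq 0$, so $\Phi$ is CP. Conversely, if $\Phi$ is CP then $\msc{J}(\Phi)\geq 0$, and the spectral theorem lets us write $\msc{J}(\Phi)=\sum_{j=1}^n\ranko{w_j}{w_j}$ with $w_j\in\mbb{C}^{d_1}\otimes\mbb{C}^{d_2}$ (absorbing the nonnegative eigenvalues into the eigenvectors; here $n$ is the rank of $\msc{J}(\Phi)$, i.e.\ the Choi-rank of $\Phi$). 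Choosing $V_j\in\M{d_1\times d_2}$ with $z_{V_j}=w_j$, the key step gives $\msc{J}\bigl(\sum_{j=1}^n\mathrm{Ad}_{V_j}\bigr)=\sum_{j=1}^n\ranko{w_j}{w_j}=\msc{J}(\Phi)$, and injectivity of $\msc{J}$ forces $\Phi=\sum_{j=1}^n\mathrm{Ad}_{V_j}$. The one place demanding care — and what I would regard as the main obstacle — is the identity $\msc{J}(\mathrm{Ad}_V)=\ranko{z_V}{z_V}$: one has to track the indices and, crucially, the placement of the adjoints so that $z_V$ comes out with $V^*$ (rather than $V$) acting on the second tensor leg; the rest is routine manipulation with the isomorphism $\msc{J}$ and the spectral theorem.
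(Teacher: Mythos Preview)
Your argument is correct. The paper does not actually supply a proof of this corollary: it is simply stated with attribution to \cite{Cho75,Kra71}, so there is nothing to compare against line by line. That said, your route is exactly the standard one implicit in the paper's presentation (and in Choi's original argument): deduce the Kraus form from Theorem~\ref{thm-C-J-map}(4) by writing the positive Choi matrix $\msc{J}(\Phi)$ as a sum of rank-one projections and translating each rank-one piece back through the vectorization isomorphism. Your identification $\msc{J}(\mathrm{Ad}_V)=\ranko{z_V}{z_V}$ with $z_V=\sum_i e_i^{(d_1)}\otimes V^*e_i^{(d_1)}$ is the right computation, and you have handled the placement of the adjoint correctly. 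The remark that $n$ can be taken to be $\mathrm{rank}\,\msc{J}(\Phi)$ is a nice bonus, as it also yields Proposition~\ref{prop-choi-rank}.
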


The representation given by \eqref{eq-Kraus-decompo} is known as the \emph{Kraus decomposition} of the map $\Phi$, with the operators $V_i$ being called Kraus operators. It is noteworthy that the Kraus decomposition of $\Phi$ need not be unique. However, in cases where $\Phi$ is expressed as $\sum_{j=1}^n\mathrm{Ad}_{V_j}$ and also as $\sum_{i=1}^m\mathrm{Ad}_{W_i}$, which are two different Kraus decompositions, it can be concluded that $\lspan\{V_j:1\leq j\leq n\}=\lspan\{W_i: 1\leq i\leq m\}$.

\begin{defn}
	Let $\Phi:\M{d_1}\to\M{d_2}$ be a CP map. The smallest number of Kraus operators necessary to express $\Phi$ as indicated in \eqref{eq-Kraus-decompo} is referred to as the \emph{Choi-rank} of $\Phi$, denoted by $CR(\Phi)$. A Kraus decomposition of $\Phi$, as outlined in \eqref{eq-Kraus-decompo}, is considered minimal if $n=CR(\Phi)$.
\end{defn}

Let $\Phi:\M{d_1}\to\M{d_2}$ be a CP map with Kraus operators $\{V_j:1\leq j\leq n\}$. It can be checked that $n=CR(\Phi)$ if and only if $\{V_j:1\leq j\leq n\}$ is linearly independent. The following proposition connects the Choi-rank of $\Phi$ with $\mbox{rank}\big(\msc{J}(\Phi)\big)$.

\begin{prop}[\cite{Cho75}]\label{prop-choi-rank}
	Let $\Phi:\M{d_1}\to\M{d_2}$ be a CP map. Then $CR(\Phi)=\mbox{rank}\big(\msc{J}(\Phi)\big)$.
\end{prop}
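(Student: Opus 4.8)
The plan is to exploit the explicit form of the Choi–Jamio\l kowski isomorphism together with the characterisation of minimality of a Kraus decomposition already recorded in the excerpt (namely, that $n = CR(\Phi)$ precisely when the Kraus operators $\{V_j\}_{j=1}^n$ are linearly independent, and that the span of the Kraus operators is an invariant of $\Phi$). First I would fix a Kraus decomposition $\Phi = \sum_{j=1}^n \mathrm{Ad}_{V_j}$ with $V_j \in \M{d_1\times d_2}$, and I may as well take it to be minimal, so $n = CR(\Phi)$ and the $V_j$ are linearly independent. The goal is then to show $\mathrm{rank}\big(\msc{J}(\Phi)\big) = n$.

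The key computational step is to write $\msc{J}(\Phi) = (\id_{d_1}\otimes\Phi)(\ranko{\Omega_{d_1}}{\Omega_{d_1}})$ in terms of the $V_j$. Since $\ranko{\Omega_{d_1}}{\Omega_{d_1}} = \sum_{k,l} \ranko{e_k^{(d_1)}}{e_l^{(d_1)}} \otimes \ranko{e_k^{(d_1)}}{e_l^{(d_1)}}$ and $\Phi(\ranko{e_k^{(d_1)}}{e_l^{(d_1)}}) = \sum_j V_j^*\, \ranko{e_k^{(d_1)}}{e_l^{(d_1)}}\, V_j$, one gets
\begin{align*}
\msc{J}(\Phi) = \sum_{j=1}^n \sum_{k,l} \ranko{e_k^{(d_1)}}{e_l^{(d_1)}} \otimes \big(V_j^*\, \ranko{e_k^{(d_1)}}{e_l^{(d_1)}}\, V_j\big) = \sum_{j=1}^n \ranko{\xi_j}{\xi_j},
\end{align*}
where $\xi_j \in \mbb{C}^{d_1}\otimes\mbb{C}^{d_2}$ is the vector obtained from $V_j$ under the canonical identification $\M{d_1\times d_2} \cong \mbb{C}^{d_1}\otimes\mbb{C}^{d_2}$ (concretely, $\xi_j = (I_{d_1}\otimes V_j^*)\Omega_{d_1} = \sum_k e_k^{(d_1)} \otimes V_j^* e_k^{(d_1)}$). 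Thus $\msc{J}(\Phi)$ is a sum of $n$ rank-one positive operators $\ranko{\xi_j}{\xi_j}$. I would then verify that the assignment $V_j \mapsto \xi_j$ is a linear isomorphism $\M{d_1\times d_2} \to \mbb{C}^{d_1}\otimes\mbb{C}^{d_2}$, so that linear independence of $\{V_j\}$ is equivalent to linear independence of $\{\xi_j\}$.

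Finally, I would invoke the elementary fact that for vectors $\xi_1,\dots,\xi_n$ in a Hilbert space, the rank of $\sum_{j=1}^n \ranko{\xi_j}{\xi_j}$ equals $\dim \lspan\{\xi_1,\dots,\xi_n\}$ — indeed, the range of this positive operator is exactly $\lspan\{\xi_j\}$, since $\ip{x, \sum_j \ranko{\xi_j}{\xi_j}\, x} = \sum_j \abs{\ip{\xi_j,x}}^2 = 0$ iff $x \perp \lspan\{\xi_j\}$. Because the $V_j$ were chosen linearly independent, so are the $\xi_j$, giving $\mathrm{rank}\big(\msc{J}(\Phi)\big) = n = CR(\Phi)$. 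For the general (non-minimal) statement one notes that the span of the Kraus operators, hence $\dim\lspan\{\xi_j\}$, does not depend on the chosen decomposition. I do not expect a genuine obstacle here; the only point requiring care is bookkeeping with the index conventions and the identification $\M{d_1\times d_2}\cong\mbb{C}^{d_1}\otimes\mbb{C}^{d_2}$ so that the vectors $\xi_j$ are correctly matched to $V_j$ (in particular getting the conjugation/adjoint placement right in $\xi_j = (I_{d_1}\otimes V_j^*)\Omega_{d_1}$).
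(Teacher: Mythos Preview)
The paper does not supply its own proof of this proposition; it is simply cited from \cite{Cho75}. Your argument is correct and is essentially the standard proof. One small point of bookkeeping: the assignment $V\mapsto (I_{d_1}\otimes V^*)\Omega_{d_1}$ is conjugate-linear rather than linear in $V$ (because of the adjoint), so strictly speaking it is not a linear isomorphism; nevertheless a conjugate-linear bijection preserves linear independence, so the conclusion stands --- and you yourself flag the conjugation placement as the one spot requiring care.
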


Let $\{u_j:1\leq j\leq d_1\}$ and $\{v_j:1\leq j\leq d_2\}$ be orthonormal basis of $\mbb{C}^{d_1}$ and $\mbb{C}^{d_2}$, respectively. For any matrix $Z\in\M{d_1}\otimes\M{d_2}$, we define the sesquilinear forms as follows:

\begin{align*}
	&\Delta_1(x_1,y_1)=\sum_{j=1}^{d_2}\ip{x_1\otimes v_j, Z(y_1\otimes v_j)} \qquad \text{for all } x_1,y_1\in\mbb{C}^{d_1},\\
	&\Delta_2(x_2,y_2)=\sum_{j=1}^{d_1}\ip{u_j\otimes x_2, Z(u_j\otimes y_2)}\qquad \text{for all } x_2,y_2\in\mbb{C}^{d_2}.
\end{align*}

From the fact that any sesquilinear form is given by a matrix, it follows that there exists unique $Z_j\in\M{d_j}$ such that 
\begin{align*}
	\Delta_j(x_j,y_j)=\ip{x_j, Z_jy_j} \quad \text{for all } x_j,y_j\in\mbb{C}^{d_j}, j=1,2.
\end{align*}
It is important to note that the operators $Z_j$ do not depend on the specific orthonormal bases $\{u_j\}$ and $\{v_j\}$. Furthermore, $Z_1$ can be derived from $Z$ by performing a trace over $\mbb{C}^{d_2}$. Consequently, we have a mapping from $\M{d_1}\otimes\M{d_2}$ to $\M{d_1}$ defined by
$$Z\mapsto\tr_{\mbb{C}^{d_2}}(Z)$$
which have the following properties:
\begin{enumerate}[label=(\roman*)]
	\item $\tr_{\mbb{C}^{d_2}}$ is a linear map preserves the both adjoint and positivity. In fact, $\tr_{\mbb{C}^{d_2}}$ is a CP map.
	\item $\tr(Z)=\tr\big(\tr_{\mbb{C}^{d_2}}(Z)\big)$ for all $Z\in\M{d_1}\otimes\M{d_2}$.
	\item $\tr_{\mbb{C}^{d_2}}((A\otimes I_{d_2})Z(B\otimes I_{d_2}))=A\tr_{\mbb{C}^{d_2}}(Z)B,$ for all $A,B\in\M{d_1}$ and $Z\in\M{d_1}\otimes\M{d_2}$. 
\end{enumerate}
 In particular, the equations $\tr_{\mbb{C}^{d_2}}(A_1\otimes A_2)=\tr(A_2)A_1$ and $\tr(A_1\otimes A_2)=\tr(A_1)\tr(A_2)$ are valid for all matrices $A_j\in\M{d_j}$, where $j=1,2$. For notational simplicity, we, henceforth, denote $\tr_{\mbb{C}^{d_j}}$ as $\tr_j$ for $j=1,2$.

 If $\rho$ is a state within $\M{d_1}\otimes\M{d_2}$, then both $\tr_2(\rho)$ and $\tr_1(\rho)$ are also states in $\M{d_1}$ and $\M{d_2}$, respectively. Given a state $\rho_j\in\M{d_j}$ for $j=1,2$. Consider the set 
 \begin{align*}
 	\msc{C}(\rho_1,\rho_2):=\{\rho\in(\M{d_1}\otimes\M{d_2})^+: \tr_2(\rho)=\rho_1, \tr_1(\rho)=\rho_2 \mbox{ and } \tr(\rho)=1\}.
 \end{align*}

 \begin{prop}
 	Let $\rho_j\in\M{d_j}$  be states for $j=1,2$. Then the set   $\msc{C}(\rho_1,\rho_2)$ is compact convex set.
 \end{prop}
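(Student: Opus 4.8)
The plan is to verify the two properties separately, since $\msc{C}(\rho_1,\rho_2)$ is defined as a subset of the finite-dimensional real vector space of Hermitian matrices in $\M{d_1}\otimes\M{d_2}$.

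First I would establish convexity. Given $\rho, \sigma \in \msc{C}(\rho_1,\rho_2)$ and $t\in[0,1]$, set $\tau = t\rho + (1-t)\sigma$. Positivity of $\tau$ is immediate since $\M{d_1}\otimes\M{d_2})^+$ is a convex cone. By linearity of the partial trace maps $\tr_1$ and $\tr_2$ (property (i) above), we get $\tr_2(\tau) = t\tr_2(\rho) + (1-t)\tr_2(\sigma) = t\rho_1 + (1-t)\rho_1 = \rho_1$, and similarly $\tr_1(\tau) = \rho_2$. Linearity of the trace gives $\tr(\tau) = t + (1-t) = 1$. Hence $\tau \in \msc{C}(\rho_1,\rho_2)$, so the set is convex.

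Next I would establish compactness. Since we work in a finite-dimensional space, it suffices to show $\msc{C}(\rho_1,\rho_2)$ is closed and bounded. Closedness: the maps $\rho \mapsto \tr_2(\rho)$, $\rho \mapsto \tr_1(\rho)$, and $\rho \mapsto \tr(\rho)$ are all linear, hence continuous, and the cone $(\M{d_1}\otimes\M{d_2})^+$ is closed; therefore $\msc{C}(\rho_1,\rho_2)$ is the intersection of the closed cone with the preimages of the closed (singleton) sets $\{\rho_1\}$, $\{\rho_2\}$, $\{1\}$, hence closed. Boundedness: for any $\rho \in \msc{C}(\rho_1,\rho_2)$ we have $\rho \geq 0$ and $\tr(\rho) = 1$, so all eigenvalues of $\rho$ lie in $[0,1]$; consequently $\norm{\rho} \leq 1$ in operator norm (and hence the set is bounded in any norm, all norms being equivalent in finite dimensions). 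Combining closedness and boundedness with the Heine-Borel theorem gives compactness.

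There is no serious obstacle here; the statement is essentially a bookkeeping exercise using linearity and continuity of the partial traces together with finite-dimensionality. The only point requiring a moment's care is the boundedness argument, where one should note that positivity plus unit trace forces a uniform bound on the matrix entries (via the eigenvalue bound), so that $\msc{C}(\rho_1,\rho_2)$ sits inside a fixed ball; everything else is routine.
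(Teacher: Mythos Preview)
Your proposal is correct and follows essentially the same approach as the paper: convexity is immediate, boundedness comes from $\norm{\rho}\leq\tr(\rho)=1$ for states, and closedness follows from continuity of the (linear) partial trace maps together with closedness of the positive cone. The only cosmetic difference is that the paper phrases closedness via a convergent sequence while you phrase it via preimages of closed sets, but these are equivalent routine arguments.
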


 \begin{proof}
 	Indeed, the set $\msc{C}(\rho_1,\rho_2)$ is convex. Since, for any state $\rho$,  $\norm{\rho} \leq \tr(\rho) = 1$, the set $\msc{C}(\rho_1,\rho_2)$ is bounded. To establish its compactness, it suffices to show that the set is closed. Consider a sequence $Z_n \in \msc{C}(\rho_1,\rho_2)$ such that $Z_n \to \rho \in \M{d_1} \otimes \M{d_2}$. Clearly, $\rho\geq 0$ since the set $(\M{d_1} \otimes \M{d_2})^+$ is closed. Moreover, we have $\rho_j = \tr_j(Z_n) \to \tr_j(\rho)$ for $j=1,2$ and $1=\tr(Z_n) \to \tr(\rho)$. Therefore, $\tr_2(\rho) = \rho_1$ and $\tr_1(\rho) = \rho_2$ and, hence $\rho\in\msc{C}(\rho_1,\rho_2)$. This completes the proof.
 \end{proof}

 This is to be noted that, to study the extreme points of $\msc{C}(\rho_1,\rho_2)$, without loss of generality, we may assume both $\rho_1$ and $\rho_2$ are invertible because of the following reason. For any operator $A$ defined on a Hilbert space $\mcl{H}$, let $\ker{A}$ denote the null space and $\ran{A}$ represent the range space of $A$. Let $\rho\in\msc{C}(\rho_1,\rho_2)$.
 Suppose $x\in\ker{\rho_1}$, then we have
 \begin{align*}
 	0&=\ip{x,\rho_1x}
 	=\sum_{j=1}^{d_2}\ip{x\otimes v_j,\rho (x\otimes v_j)},
 \end{align*}
 where $\{ v_j\}_{j=1}^{d_2}$ constitutes any orthonormal basis of $\mathbb{C}^{d_2}$. Given that $\rho$ is a positive operator, it follows that $\rho(x\otimes v_j)=0$ for all $j$. Consequently, we deduce that $\ker{\rho_1}\otimes \mathbb{C}^{d_2}\subseteq\ker{\rho}$. In a similar manner, it can be shown that $\mathbb{C}^{d_1}\otimes \ker{\rho_2}\subseteq\ker{\rho}$. By considering the orthogonal complements, we arrive at the conclusion that $\ran{\rho}\subseteq\ran{\rho_1}\otimes\ran{\rho_2}$.  Denote $\mathcal{H}_0=\ran{\rho_1}\otimes\ran{\rho_2}$. With respect to the decomposition $\mathbb{C}^{d_1}\otimes\mathbb{C}^{d_2}=\mathcal{H}_0\oplus\mathcal{H}_0^\perp$, we obtain
 \begin{align*}
 	\rho=\begin{pmatrix}
 		\widetilde{\rho} & 0 \\
 		0 & 0\\
 	\end{pmatrix}.
 \end{align*}
 So $\rho$ is an extreme point of $\msc{C}(\rho_1,\rho_2)$ if and only if $\tilde{\rho}$ is an extreme point of $\msc{C}(\widetilde{\rho_1},\widetilde{\rho_2})$ where $\widetilde{\rho_1}=\tr_{\ran{\rho_2}}(\tilde{\rho})$ and $\widetilde{\rho_2}=\tr_{\ran{\rho_1}}(\tilde{\rho})$. Also, with respect to the decomposition $\mathbb{C}^{d_j}=\ran{\rho_j}\oplus\ker{\rho_j}$, we get
 \begin{align*}
 	\rho_j=\begin{pmatrix}
 		\widetilde{\rho_j}& 0 \\
 		0 & 0\\
 	\end{pmatrix}
\end{align*}
 for $j=1,2$. Clearly $\widetilde{\rho_j}$ is invertible for $j=1,2$. Therefore, to study the extreme points of $\msc{C}(\rho_1,\rho_2)$, without loss of generality, we may assume both $\rho_1$ and $\rho_2$ are invertible.

 We will now utilize the Choi-Jamio\l kowski map $\msc{J}$, to convert the set $\msc{C}(\rho_1,\rho_2)$ into a subset of $\B{\M{d_1},\M{d_2}}$. The linearity of $\msc{J}$, which implies continuity, guarantees that it will map a compact convex set to another compact convex set. Additionally, this mapping retains the characteristics of extreme points, that is, extreme points are transformed into extreme points and vice versa.

 \begin{prop}\label{prop-C-J-extreme to extreme}
 Let $\msc{J}:\B{\M{d_1},\M{d_2}}\to\M{d_1}\otimes\M{d_2}$ be the Choi-Jamio\l kowski map given in the Theorem \ref{thm-C-J-map}. Then
 \begin{enumerate}[label=(\roman*)]
 	\item $\msc{J}^{-1}\big(\msc{C}(\rho_1,\rho_2)\big)=\{\Phi:\M{d_1}\to\M{d_2}: \Phi \mbox{ is CP}, \; \Phi(I_{d_1})=\rho_2 \text{ and } \Phi^*(I_{d_2})=\rho_1^{\T}\}$ is a compact convex set.
 	\item $\rho$ is an extreme point of the set $\msc{C}(\rho_1,\rho_2)$ if and only if $\msc{J}^{-1}(\rho)$ is an extreme point of the set $\msc{J}^{-1}\big(\msc{C}(\rho_1,\rho_2)\big)$.
 	\item $\sup\{\mbox{rank}(\rho): \rho \in\msc{E}(\rho_1,\rho_2)\}=\sup\{CR(\Phi): \Phi \in\msc{J}^{-1}\big(\msc{E}(\rho_1,\rho_2)\big)\}$, where $\msc{E}(\rho_1,\rho_2)$ denotes extreme points of the set $\msc{C}(\rho_1,\rho_2)$.
 \end{enumerate}
 \end{prop}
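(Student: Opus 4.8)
The plan is to transport everything through the linear isomorphism $\msc{J}$ of Theorem \ref{thm-C-J-map}, so that the two partial-trace constraints defining $\msc{C}(\rho_1,\rho_2)$ become constraints on a completely positive map and its Hilbert--Schmidt dual, after which the statements about compactness, convexity, extreme points and rank are purely formal.

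For part (i), I would first record the two partial-trace formulas for $\msc{J}(\Phi)$. Expanding $\ranko{\Omega_{d_1}}{\Omega_{d_1}}=\sum_{i,j=1}^{d_1}\ranko{e_i^{(d_1)}}{e_j^{(d_1)}}\otimes\ranko{e_i^{(d_1)}}{e_j^{(d_1)}}$ and applying $\id_{d_1}\otimes\Phi$ gives $\msc{J}(\Phi)=\sum_{i,j}\ranko{e_i^{(d_1)}}{e_j^{(d_1)}}\otimes\Phi\big(\ranko{e_i^{(d_1)}}{e_j^{(d_1)}}\big)$. Tracing out the first leg and using $\tr\big(\ranko{e_i^{(d_1)}}{e_j^{(d_1)}}\big)=\delta_{ij}$ gives $\tr_1\big(\msc{J}(\Phi)\big)=\Phi(I_{d_1})$; tracing out the second leg and using the defining relation $\tr\big(\Phi(X)^{*}Y\big)=\tr\big(X^{*}\Phi^{*}(Y)\big)$ with $Y=I_{d_2}$ gives $\tr_2\big(\msc{J}(\Phi)\big)=\Phi^{*}(I_{d_2})^{\T}$ (using that $\Phi$ CP makes $\Phi^{*}$ Hermitian preserving, so $\Phi^{*}(I_{d_2})$ is Hermitian and its entrywise conjugate equals its transpose). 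Since Theorem \ref{thm-C-J-map}(1),(4) says $\msc{J}$ is a linear isomorphism carrying the CP maps bijectively onto $(\M{d_1}\otimes\M{d_2})^{+}$, it follows that $\msc{J}(\Phi)\in\msc{C}(\rho_1,\rho_2)$ iff $\Phi$ is CP, $\Phi(I_{d_1})=\rho_2$ and $\Phi^{*}(I_{d_2})^{\T}=\rho_1$; the last equation is equivalent to $\Phi^{*}(I_{d_2})=\rho_1^{\T}$ because transposition is an involution, and the condition $\tr(\rho)=1$ is then automatic since $\tr(\rho)=\tr\big(\tr_1(\rho)\big)=\tr\big(\Phi(I_{d_1})\big)=\tr(\rho_2)=1$. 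This proves the set equality. For the topological assertion, $\msc{J}$ is a linear bijection between finite-dimensional spaces, hence a homeomorphism that sends convex sets to convex sets, so the preimage of the compact convex set $\msc{C}(\rho_1,\rho_2)$ is again compact and convex.

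For part (ii), I would invoke the elementary fact that a bijective affine map $L$ carries extreme points to extreme points in both directions: if $x=\tfrac12(y+z)$ with $y,z$ in a convex set $K$ then $L(x)=\tfrac12(L(y)+L(z))$ with $L(y),L(z)\in L(K)$, and the converse follows by applying $L^{-1}$; applying this with $L=\msc{J}^{-1}$ and $K=\msc{C}(\rho_1,\rho_2)$ gives (ii). For part (iii), part (ii) shows $\msc{J}$ restricts to a bijection from $\msc{J}^{-1}\big(\msc{E}(\rho_1,\rho_2)\big)$ onto $\msc{E}(\rho_1,\rho_2)$, while Proposition \ref{prop-choi-rank} gives $CR(\Phi)=\mbox{rank}\big(\msc{J}(\Phi)\big)$ for every CP map $\Phi$; combining these,
\begin{align*}
\sup\{CR(\Phi):\Phi\in\msc{J}^{-1}(\msc{E}(\rho_1,\rho_2))\}
&=\sup\big\{\mbox{rank}\big(\msc{J}(\Phi)\big):\Phi\in\msc{J}^{-1}(\msc{E}(\rho_1,\rho_2))\big\}\\
&=\sup\{\mbox{rank}(\rho):\rho\in\msc{E}(\rho_1,\rho_2)\}.
\end{align*}

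The only step with genuine computational content is the identification of $\tr_2\big(\msc{J}(\Phi)\big)$ in part (i): one must be careful that pairing the second leg against the identity produces the Hilbert--Schmidt dual $\Phi^{*}$ (not $\Phi$ itself) and introduces a transpose. Everything else reduces to formal manipulation of the isomorphism $\msc{J}$ together with standard compactness and convexity facts, so I would not expect a real obstacle.
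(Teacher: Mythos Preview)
Your proof is correct and follows essentially the same route as the paper: both compute $\tr_1(\msc{J}(\Phi))=\Phi(I_{d_1})$ and $\tr_2(\msc{J}(\Phi))=\Phi^{*}(I_{d_2})^{\T}$ directly from the block expansion of $\msc{J}(\Phi)$, then use that $\msc{J}$ is a linear (hence affine, bicontinuous) isomorphism to transfer compactness, convexity, extreme points, and rank via Proposition~\ref{prop-choi-rank}. Your treatment of parts (ii) and (iii) is in fact more explicit than the paper's, which dispatches them with ``follows trivially'' and ``follows from Proposition~\ref{prop-choi-rank}''.
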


 \begin{proof}
 	$(i)$ Let $\Phi\in\B{\M{d_1},\M{d_2}}$. Then, 
 	\footnotesize
 	\begin{align*}
 		\tr_{1}(\msc{J}(\Phi))&=\sum_{i,j=1}^{d_1}\tr\left(\ranko{e_i^{(d_1)}}{e_j^{(d_1)}}\right)\Phi\left(\ranko{e_i^{(d_1)}}{e_j^{(d_1)}}\right)=\sum_{i=1}^{d_1}\Phi\left(\ranko{e_i^{(d_1)}}{e_i^{(d_1)}}\right)=\Phi(I_{d_1}),\\
 		\tr_2(\msc{J}(\Phi))&=\sum_{i,j=1}^{d_1}\ranko{e_i^{(d_1)}}{e_j^{(d_1)}}\tr\left(\Phi\left(\ranko{e_i^{(d_1)}}{e_j^{(d_1)}}\right)\right)=\sum_{i,j=1}^{d_1}\tr\left(\Phi^*(I_{d_2})\ranko{e_i^{(d_1)}}{e_j^{(d_1)}}\right)\ranko{e_i^{(d_1)}}{e_j^{(d_1)}}=(\Phi^*(I_{d_2})^*)^{\T}
 	\end{align*}
 	\normalsize
 	So $\msc{J}\Big(\left\{\Phi:\M{d_1}\to\M{d_2}: \Phi \mbox{ is CP}, \; \Phi(I_{d_1})=\rho_2 \text{ and }\Phi^*(I_{d_2})=\rho_1^{\T}\right\}\Big)=\msc{C}(\rho_1,\rho_2)$. Indeed, $\msc{J}^{-1}\big(\msc{C}(\rho_1,\rho_2)\big)$  is convex as $\msc{J}$ and $\msc{J}^{-1}$ are linear.  Since the map $\msc{J}$ and $\msc{J}^{-1}$ are continuous, we have that $\msc{J}^{-1}\big(\msc{C}(\rho_1,\rho_2)\big)$ is compact.   \\
 	$(ii)$ Follows trivially.\\
 	$(iii)$ Follows from  the Proposition \ref{prop-choi-rank}.
 \end{proof}

 Let $A_j\in\M{d_j}$ for $j=1,2$. Denote
 \begin{align*}
 	\msc{CP}[\M{d_1},\M{d_2}; A_1,A_2]:=&\left\{\Phi:\M{d_1}\to\M{d_2}: \Phi \mbox{ is CP},\; \Phi(I_{d_1})=A_2 \text{ and } \Phi^*(I_{d_2})=A_1\right\},\\
 	\msc{CP}[\M{d_1},\M{d_2}; A_2]:=&\left\{\Phi:\M{d_1}\to\M{d_2}: \Phi \mbox{ is CP and }\Phi(I_{d_1})=A_2\right\},\\
 	\msc{MR}(A_1,A_2):=&\sup\left\{CR(\Phi): \Phi \text{ is an extreme point of }\msc{CP}[\M{d_1},\M{d_2}; A_1, A_2]\right\}.
 \end{align*}

 In view of Proposition \ref{prop-C-J-extreme to extreme}, it follows that the above sets $ \msc{CP}[\M{d_1},\M{d_2}; A_1, A_2]$ and $\msc{CP}[\M{d_1},\M{d_2}; A_2]$ are compact convex sets, and studying extreme points of $\msc{C}(\rho_1, \rho_2)$ is equivalent with studying extreme points of $\msc{CP}[\M{d_1},\M{d_2}; \rho_1^{\T}, \rho_2]$. The following results provide necessary and sufficient conditions for an element within the above sets to be extreme for these sets. The reader may refer \cite{Cho75, krp98, krp05, Ru04} for comprehensive insights on this subject.

  \begin{thm}[Thoerem 5, \cite{Cho75}]\label{choi}
 	Let $A_2\in\M{d_2}$ and $\Phi\in \msc{CP}[\M{d_1},\M{d_2}; A_2]$. Then $\Phi$ is an extreme point of the set $ \msc{CP}[\M{d_1},\M{d_2}; A_2]$ if and only if there exists a Kraus decompositon  of $\Phi=\sum_{j=1}^n\mathrm{Ad}_{V_j}$, such that
 	the set $\{V_i^*V_j\}_{i,j=1}^{n}\subseteq\M{d_2}$ is linearly independent.
 \end{thm}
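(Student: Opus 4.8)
\textit{Proof strategy.} The plan is to transport the whole question through the Choi--Jamio\l kowski isomorphism $\msc{J}$ and reduce extremality to a linear-algebraic statement about a single Choi matrix. Write $C=\msc{J}(\Phi)\in(\M{d_1}\otimes\M{d_2})^+$; by Proposition~\ref{prop-C-J-extreme to extreme} the set $\msc{CP}[\M{d_1},\M{d_2};A_2]$ corresponds under $\msc{J}$ to $\{C\geq 0:\tr_1(C)=A_2\}$, and $\msc{J}$ is an affine homeomorphism onto it. First I would record a dictionary: for $V\in\M{d_1\times d_2}$ put $\eta_V:=\sum_{a=1}^{d_1}e_a^{(d_1)}\otimes V^*e_a^{(d_1)}\in\mbb{C}^{d_1}\otimes\mbb{C}^{d_2}$, so that $V\mapsto\eta_V$ is a conjugate-linear bijection. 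A short computation of the type used to evaluate $\msc{J}$ on $\ranko{\Omega_{d_1}}{\Omega_{d_1}}$ shows $\msc{J}\bigl(X\mapsto V^*XW\bigr)=\ranko{\eta_V}{\eta_W}$ for all $V,W$, and the identity $\tr_1(\msc{J}(\Psi))=\Psi(I_{d_1})$ (valid for any linear $\Psi$ by the computation in the proof of Proposition~\ref{prop-C-J-extreme to extreme}) gives $\tr_1(\ranko{\eta_V}{\eta_W})=V^*W$. Consequently, if $\Phi=\sum_{j=1}^n\mathrm{Ad}_{V_j}$, then $C=\sum_{j=1}^n\ranko{\eta_j}{\eta_j}$ with $\eta_j:=\eta_{V_j}$, so $\ran{C}=\lspan\{\eta_1,\dots,\eta_n\}$; by Proposition~\ref{prop-choi-rank} this decomposition is minimal iff $\{\eta_j\}_{j=1}^n$ — equivalently $\{V_j\}_{j=1}^n$ — is linearly independent, in which case $\{\eta_j\}_{j=1}^n$ is a basis of $\ran{C}$.

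Next I would reformulate extremality. As in the discussion preceding the statement, $\Phi$ is \emph{not} an extreme point of $\msc{CP}[\M{d_1},\M{d_2};A_2]$ exactly when there is a nonzero linear map $\Delta:\M{d_1}\to\M{d_2}$ with $\Delta(I_{d_1})=0$ such that both $\Phi+\Delta$ and $\Phi-\Delta$ are completely positive (given $\Phi=\tfrac12\Phi_1+\tfrac12\Phi_2$ with $\Phi_1\neq\Phi_2$ in the set, take $\Delta=\Phi_1-\Phi$; conversely $\Phi=\tfrac12(\Phi+\Delta)+\tfrac12(\Phi-\Delta)$ is a nontrivial average). Applying $\msc{J}$ and putting $E:=\msc{J}(\Delta)$, this reads: $\Phi$ is not extreme iff there is a nonzero Hermitian $E$ with $\tr_1(E)=0$ and $C\pm E\geq 0$.

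The heart of the argument is the analysis of such an $E$. Fix a \emph{minimal} Kraus decomposition $\Phi=\sum_{j=1}^n\mathrm{Ad}_{V_j}$, so $\{\eta_j\}_{j=1}^n$ is a basis of $\ran{C}$, and let $G$ be the injective operator whose columns are $\eta_1,\dots,\eta_n$, so $C=GG^*$. From $0\leq C\pm E\leq 2C$ one gets $\ker{C}\subseteq\ker{C\pm E}$ — indeed if $R\geq 0$ and $0\leq S\leq\lambda R$ then $Rx=0$ forces $\ip{x,Sx}=0$ and hence $Sx=0$ — so $E=\tfrac12\bigl((C+E)-(C-E)\bigr)$ annihilates $\ker{C}$, i.e.\ $\ran{E}\subseteq\ran{C}$. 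Hence $E=GSG^*$ for a unique Hermitian $S=[s_{ij}]\in\M{n}$; injectivity of $G$ makes $C\pm E\geq 0$ equivalent to $I_n\pm S\geq 0$, i.e.\ $\snorm{S}\leq 1$, while $\tr_1(E)=\sum_{i,j=1}^n s_{ij}V_i^*V_j$ by the dictionary, and $E\neq 0$ iff $S\neq 0$. Since $\Delta$ may be rescaled, the existence of a nonzero admissible $\Delta$ is therefore equivalent to the existence of a nonzero Hermitian $S\in\M{n}$ with $\sum_{i,j}s_{ij}V_i^*V_j=0$; decomposing an arbitrary coefficient matrix into Hermitian and skew-Hermitian parts and using $\bigl(\sum_{i,j}t_{ij}V_i^*V_j\bigr)^*=\sum_{i,j}\overline{t_{ji}}\,V_i^*V_j$, this is in turn equivalent to linear dependence of $\{V_i^*V_j\}_{i,j=1}^n$ in $\M{d_2}$.

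Assembling the pieces, $\Phi$ is extreme if and only if, for its minimal Kraus decomposition $\sum_{j=1}^n\mathrm{Ad}_{V_j}$, the set $\{V_i^*V_j\}_{i,j=1}^n$ is linearly independent; this gives the ``only if'' direction. For the ``if'' direction, if some Kraus decomposition $\Phi=\sum_{j=1}^n\mathrm{Ad}_{V_j}$ has $\{V_i^*V_j\}_{i,j}$ linearly independent, then $\{V_j\}_{j=1}^n$ is linearly independent (multiply a relation $\sum_j c_jV_j=0$ on the left by $V_i^*$), so $n=CR(\Phi)$, the decomposition is already minimal, and extremality follows from the equivalence above. I expect the main obstacle to be the structural step in the third paragraph: converting the pair of operator inequalities $C\pm E\geq 0$ into the normalized form $E=GSG^*$ with $\snorm{S}\leq 1$, and checking that the affine constraint $\Delta(I_{d_1})=0$ becomes precisely a nontrivial linear relation among the products $V_i^*V_j$; one must also verify that the reduction ``nonzero $\Delta\leftrightarrow$ nonzero Hermitian $S$'' survives the rescaling and that Hermitian versus arbitrary coefficient matrices detect the same linear dependence.
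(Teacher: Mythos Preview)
Your argument is correct. Note, however, that the paper does not supply its own proof of this statement: it is quoted as Theorem~5 of \cite{Cho75} and used as a black box (the subsequent Remark~\ref{remark} even refers the reader to Choi's original proof). So there is no in-paper proof to compare against.

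For what it is worth, your route via the Choi--Jamio\l kowski picture is a clean modern repackaging of Choi's original argument. Choi works directly with Kraus operators: given a minimal decomposition $\Phi=\sum_j\mathrm{Ad}_{V_j}$, any $\Psi$ with $0\leq\Psi\leq\Phi$ (in the CP order) has Kraus operators lying in $\lspan\{V_j\}$, hence $\Psi(X)=\sum_{i,j}s_{ij}V_i^*XV_j$ for some positive $S=[s_{ij}]$; a nontrivial convex decomposition of $\Phi$ inside the set then forces a nonzero Hermitian $S$ with $\sum_{i,j}s_{ij}V_i^*V_j=0$, and conversely. Your version transports this to the Choi matrix side, where ``$0\leq\Psi\leq\Phi$ in the CP order'' becomes the operator inequality $0\leq C\pm E\leq 2C$, the range inclusion $\ran{E}\subseteq\ran{C}$ replaces the statement that the Kraus operators of $\Psi$ lie in $\lspan\{V_j\}$, and the compression $E=GSG^*$ is exactly the coefficient matrix $S$ appearing in Choi's proof. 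The two arguments are essentially the same; yours makes the role of Proposition~\ref{prop-choi-rank} and the formula $\tr_1\bigl(\ranko{\eta_V}{\eta_W}\bigr)=V^*W$ explicit, which dovetails nicely with how the paper later uses the isomorphism. The only minor point worth tightening is the uniqueness of $S$ in $E=GSG^*$: it follows from the invertibility of $G^*G$, and you might state this rather than leave it implicit.
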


 \begin{rmk}\label{remark}
 	Let $\Phi\in \msc{CP}[\M{d_1},\M{d_2}; A_2]$ with minimal Kraus decomposition $\Phi=\sum_{j=1}^n\mathrm{Ad}_{V_j}$. Then $\Phi$ is an extreme point of the set $ \msc{CP}[\M{d_1},\M{d_2}; A_2]$ if and only if the set $\{V_i^*V_j\}_{i,j=1}^{n}\subseteq\M{d_2}$ is linearly independent. The reader may refer to the proof of the above theorem for this.
 \end{rmk}

 \begin{defn}
 	Let $V, W$ be two vector spaces. A pair of sets $\{v_j\}_{j=1}^{n}\subseteq V$ and $\{w_j\}_{j=1}^{n}\subseteq W$ is said to be \emph{bi-linearly independent} if the set $\{(v_j,w_j)\}_{j=1}^{n}$ is linearly independent in the vector space $V\times W$. i.e., whenever $\lambda_j\in\mbb{C}$ such that $\sum_{j=1}^n\lambda_jv_j=0$ and $\sum_{j=1}^n\lambda_jw_j=0$ implies $\lambda_j=0$ for all $j$.
 \end{defn}

 \begin{rmk}
 	From the definition of bi-linearly independent, it follows that if either $\{v_j\}_{j=1}^{n}$ or $\{w_j\}_{j=1}^{n}$ is linearly independent, then the pair  $\{v_j\}_{j=1}^{n}$ and $\{w_j\}_{j=1}^{n}$ is bi-linearly independent. But the converse need not be true in general.
 \end{rmk}

 Since $\msc{CP}[\M{d_1},\M{d_2}; A_1, A_2]\subseteq \msc{CP}[\M{d_1},\M{d_2}; A_2]$, if $\Phi\in\msc{CP}[\M{d_1},\M{d_2}; A_1, A_2]$ be an element such that it is an extreme point of the set $\msc{CP}[\M{d_1},\M{d_2}; A_2]$, then it is also an extreme point of the set $\msc{CP}[\M{d_1},\M{d_2}; A_1, A_2]$. However, the reverse implication is not necessarily true. The subsequent theorem provides a characterization of the extreme points within the set $\msc{CP}[\M{d_1},\M{d_2}; A_1, A_2]$.

 \begin{thm}[\cite{LaSt93, krp98, Ru04}]\label{thm-extr-condition-for doubly}
 	Let $A_j\in\M{d_j}$ for $j=1,2$ and $\Phi\in \msc{CP}[\M{d_1},\M{d_2}; A_1,A_2]$. Then $\Phi$ is an extreme point of the set $ \msc{CP}[\M{d_1},\M{d_2}; A_1,A_2]$ if and only if there exists $\{V_j\}_{j=1}^n\subseteq \M{d_1\times d_2}$ such that $\Phi=\sum_{i=j}^n\mathrm{Ad}_{V_j}$ and the pair
 	$\{V_i^*V_j\}_{i,j=1}^{n}\subseteq\M{d_2}$ and $\{V_jV_i^*\}_{i,j=1}^{n}\subseteq\M{d_1}$ is bi-linearly independent.
 \end{thm}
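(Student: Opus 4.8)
The plan is to imitate the proof of Theorem~\ref{choi} (Choi's extreme-point criterion for the singly-constrained set $\msc{CP}[\M{d_1},\M{d_2};A_2]$), carrying along the extra constraint $\Phi^*(I_{d_2})=A_1$; it is precisely this second constraint that upgrades \emph{linear} independence of $\{V_i^*V_j\}$ to \emph{bi-linear} independence of the pair $\{V_i^*V_j\}$, $\{V_jV_i^*\}$. First I would reduce to a minimal decomposition: if \emph{some} Kraus decomposition $\Phi=\sum_{j=1}^n\mathrm{Ad}_{V_j}$ has the stated bi-linear independence property, then $\{V_j\}_{j=1}^n$ is automatically linearly independent, since $\sum_jc_jV_j=0$ with $c_k\neq0$ would make $\lambda_{ij}:=\overline{c_i}c_j$ a nontrivial ($\lambda_{kk}=\abs{c_k}^2$) solution of $\sum_{ij}\lambda_{ij}V_i^*V_j=(\sum_jc_jV_j)^*(\sum_jc_jV_j)=0$ and $\sum_{ij}\lambda_{ij}V_jV_i^*=(\sum_jc_jV_j)(\sum_jc_jV_j)^*=0$. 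Hence such a decomposition is minimal, and as every extreme point admits a minimal decomposition it suffices to prove the equivalence for a fixed minimal Kraus decomposition $\Phi=\sum_{j=1}^n\mathrm{Ad}_{V_j}$.

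Next I would set up a parametrized family of perturbations. For $T=[t_{ij}]\in\M{n}$ put $\Theta_T(X):=\sum_{i,j}t_{ij}V_i^*XV_j$. Using minimality together with Theorem~\ref{thm-C-J-map}/Proposition~\ref{prop-choi-rank} applied to $\msc{J}(\Theta_T)=\sum_{ij}t_{ij}\ranko{\eta_i}{\eta_j}$ (where $\eta_j:=\sum_ae_a^{(d_1)}\otimes V_j^*e_a^{(d_1)}$ form a linearly independent system), I would record: $T\mapsto\Theta_T$ is injective; $\Theta_T$ is CP iff $T\geq0$; $\Phi=\Theta_{I_n}$; and $\Theta_T(I_{d_1})=\sum_{ij}t_{ij}V_i^*V_j$ while $\Theta_T^*(Y)=\sum_{ij}\overline{t_{ij}}V_iYV_j^*$, so that $\Theta_T^*(I_{d_2})=\sum_{ij}t_{ij}V_jV_i^*$ whenever $T$ is Hermitian. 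I would also invoke the Radon--Nikodym fact from \cite{Cho75}: if $\Psi$ is CP and $2\Phi-\Psi$ is CP, then the Kraus operators of $\Psi$ lie in $\lspan\{V_j\}$, hence $\Psi=\Theta_A$ for a unique $A$ with $0\leq A\leq 2I_n$.

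With this in hand, the two implications are short. For ``$\Phi$ extreme $\Rightarrow$ bi-linearly independent'', I argue by contrapositive: given a nonzero $[\lambda_{ij}]$ with $\sum_{ij}\lambda_{ij}V_i^*V_j=0$ and $\sum_{ij}\lambda_{ij}V_jV_i^*=0$, taking adjoints and swapping indices shows $[\overline{\lambda_{ji}}]$ solves the same system, so the solution space is self-adjoint and I may take the witness $T$ Hermitian and nonzero; then $\Psi:=\Theta_T\neq0$ is Hermitian preserving with $\Psi(I_{d_1})=0=\Psi^*(I_{d_2})$, and for small $\epsilon>0$ the maps $\Phi\pm\epsilon\Psi=\Theta_{I_n\pm\epsilon T}$ are CP and still lie in $\msc{CP}[\M{d_1},\M{d_2};A_1,A_2]$, so $\Phi=\tfrac12\big((\Phi+\epsilon\Psi)+(\Phi-\epsilon\Psi)\big)$ is not extreme. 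For ``bi-linearly independent $\Rightarrow$ $\Phi$ extreme'', again contrapositively: if $\Phi=\tfrac12(\Phi_1+\Phi_2)$ with $\Phi_1\neq\Phi_2$ in the set, then $2\Phi-\Phi_1=\Phi_2$ is CP, so $\Phi_1=\Theta_A$, $\Phi_2=\Theta_B$ with $A,B\geq0$; injectivity and $\Theta_{(A+B)/2}=\Phi=\Theta_{I_n}$ give $A+B=2I_n$; and $T:=A-I_n$ is Hermitian and nonzero (else $\Phi_1=\Phi_2$), with $\sum_{ij}t_{ij}V_i^*V_j=\Phi_1(I_{d_1})-\Phi(I_{d_1})=0$ and $\sum_{ij}t_{ij}V_jV_i^*=\Phi_1^*(I_{d_2})-\Phi^*(I_{d_2})=0$, witnessing bi-linear dependence of the pair.

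The hard part, I expect, is not conceptual but is contained in Step~2: correctly extracting from \cite{Cho75} the domination/Radon--Nikodym statement (Kraus operators of a dominated CP map lie in $\lspan\{V_j\}$, and positivity of the coefficient matrix is equivalent to complete positivity of $\Theta_T$), together with the conjugate/transpose bookkeeping that ensures the constraint $\Phi^*(I_{d_2})=A_1$ produces exactly the family $\{V_jV_i^*\}\subseteq\M{d_1}$ and that passing to the Hermitian part of a solution is legitimate. Once those are pinned down, the argument is a direct enhancement of Theorem~\ref{choi}.
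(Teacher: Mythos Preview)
The paper does not supply its own proof of this theorem: it is stated as a preliminary result with citations to \cite{LaSt93, krp98, Ru04} and no argument is given. Your proposal is correct and is precisely the standard argument appearing in those references---the perturbation $\Theta_T$, the Radon--Nikodym/domination step reducing any convex decomposer to $\Theta_A$ with $0\le A\le 2I_n$, and the self-adjointness of the solution space allowing a Hermitian witness $T$ are exactly the ingredients used there. Indeed, the observation in your first paragraph (that bi-linear independence of the pair forces linear independence of $\{V_j\}$, hence minimality) is reproduced verbatim in the paper as the Remark immediately following the theorem statement, confirming that your setup matches the intended framework.
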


 \begin{rmk}
 	If the pair of sets $\{V_i^*V_j\}_{i,j=1}^{n}\subseteq\M{d_2}$ and $\{V_jV_i^*\}_{i,j=1}^{n}\subseteq\M{d_1}$ is bi-linearly independent, it follows that the collection $\{V_j\}_{j=1}^n\subseteq\M{d_1\times d_2}$ is also linearly independent. To illustrate this, assume that $\{V_j\}_{j=1}^n$ is linearly dependent. This means there exists a non-zero vector $(\lambda_1,\lambda_2,\hdots,\lambda_n)\in\mbb{C}^n$ such that $\sum_{j=1}^n\lambda_jV_j=0$. Consequently, for each $1\leq i\leq n$, we have $V_i^*\left(\sum_{j=1}^n\lambda_jV_j\right)=0$ and $\left(\sum_{j=1}^n\lambda_jV_j\right)V_i^*=0$. Therefore, it follows that $\sum_{i,j=1}^n\lambda_j V_i^*V_j=0$ and $\sum_{i,j=1}^n\lambda_jV_jV_i^*=0$. This indicates that the sets $\{V_i^*V_j\}_{i,j=1}^{n}\subseteq\M{d_2}$ and $\{V_jV_i^*\}_{i,j=1}^{n}\subseteq\M{d_1}$ cannot be bi-linearly independent.
 \end{rmk}

  Let $A_j\in\M{d_j}$ for $j=1,2$. In view of Theorem \ref{thm-extr-condition-for doubly}, we have $\msc{MR}(A_1,A_2)\leq \lfloor{\sqrt{d_1^2+d_2^2}}\rfloor$. KRP \cite{krp05} improved this upper bound on the rank of the extreme points of $\msc{CP}[\M{d_1},\M{d_2}; A_1,A_2]$, and established that $\msc{MR}(A_1,A_2)\leq \lfloor{\sqrt{d_1^2+d_2^2-1}}\rfloor$. Furthermore, he proved that all extreme points of the set $\msc{CP}[\M{2},\M{2}; I_2, I_2]$ are pure, that is, $\msc{MR}(\frac{I_2}{2},\frac{I_2}{2})=1$. This shows that the upper bound given by K. R. Parthasarathy is not sharp for $d_1=d_2=2$.

  It is conjectured in \cite{PrSa07} that every extreme point of the set $\msc{CP}[\M{d},\M{d}; I_d, I_d]$ is pure for all $d\geq 3$. However, Ohno \cite{Ohno10} provided counterexamples demonstrating the existence of extreme points that are not pure. These examples also assert that for any dimension $d\geq 3$, there exists an extreme point in the set $\msc{CP}(\M{d},\M{d}; I_{d}, I_{d})$ with Choi-rank $d$. Most explicitly, he obtained the following.

  Let us denote the standard matrix units of $\M{d}$ by $E_{ij}^{(d)}=\ranko{e_i^{(d)}}{e_j^{(d)}}$. For the sake of simplicity, it is common practice to exclude the superscript $(d)$, thereby referring to them as $E_{ij}$.

 \begin{thm}[Theorem 2.8, \cite{Ohno10}]\label{thm-extrem-rank-lowerbound}
	Given $d\geq 3$, define the map $\Phi:\M{d}\to\M{d}$ by
	\begin{align}\label{eq-extreme-rank-lowerbound-2}
		\Phi=\sum_{j=1}^{d}\mathrm{Ad}_{V_j},
	\end{align}
	where
	\begin{equation}\label{eq-extreme-rank-lowerbound}
		\begin{cases}
			V_1&=\sqrt{\frac{d-2}{d-1}}\sum\limits_{j=2}^{d}E_{jj},\\
			V_k&=\frac{1}{\sqrt{d-1}}(E_{1k}+E_{k1}), \text{ for all } 2\leq k\leq d.
		\end{cases}
	\end{equation}
	Then $\Phi\in\msc{CP}(\M{d},\M{d}; I_{d}, I_{d})$ is an extreme point of the set $\msc{CP}(\M{d},\M{d}; I_{d})$ with $CR(\Phi)=d$ (hence extreme point of $\msc{CP}(\M{d},\M{d}; I_{d}, I_{d})$).
 \end{thm}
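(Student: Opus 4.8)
The plan is to verify the two conditions of Theorem \ref{choi} (and Remark \ref{remark}) directly: first that the given Kraus decomposition $\Phi=\sum_{j=1}^d\mathrm{Ad}_{V_j}$ is minimal, i.e. $\{V_j\}_{j=1}^d$ is linearly independent, so that $CR(\Phi)=d$; and second that the family $\{V_i^*V_j\}_{i,j=1}^d\subseteq\M{d}$ is linearly independent. Since $\M{d_1\times d_2}=\M{d}$ here and the extreme-point characterization for $\msc{CP}(\M{d},\M{d};I_d)$ requires no condition on the $V_jV_i^*$ side, proving these two facts suffices; the containment $\Phi\in\msc{CP}(\M{d},\M{d};I_d,I_d)$ will follow from the side computation $\Phi(I_d)=\sum_j V_j^*V_j=I_d$ and $\Phi^*(I_d)=\sum_j V_jV_j^*=I_d$ (these are the same sum because each $V_j$ is Hermitian), which then upgrades extremality in $\msc{CP}(\M{d},\M{d};I_d)$ to extremality in $\msc{CP}(\M{d},\M{d};I_d,I_d)$ via the inclusion remarked before Theorem \ref{thm-extr-condition-for doubly}.

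\textbf{Key steps.} First I would compute $\sum_{j=1}^d V_j^*V_j$. Using $V_1=\sqrt{\tfrac{d-2}{d-1}}\sum_{j=2}^d E_{jj}$ we get $V_1^*V_1=\tfrac{d-2}{d-1}\sum_{j=2}^d E_{jj}$, and for $2\le k\le d$, since $V_k=\tfrac{1}{\sqrt{d-1}}(E_{1k}+E_{k1})$ is Hermitian, $V_k^*V_k=V_k^2=\tfrac{1}{d-1}(E_{11}+E_{kk})$. Summing over $k=2,\dots,d$ gives $\tfrac{1}{d-1}\big((d-1)E_{11}+\sum_{k=2}^d E_{kk}\big)=E_{11}+\tfrac{1}{d-1}\sum_{k=2}^d E_{kk}$, and adding $V_1^*V_1$ yields $E_{11}+\sum_{j=2}^d E_{jj}=I_d$. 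The same computation shows $\sum_j V_jV_j^*=I_d$. Next, linear independence of $\{V_j\}_{j=1}^d$: the $V_k$ for $k\ge 2$ have support only on off-diagonal matrix units $E_{1k},E_{k1}$, and these are supported on disjoint index pairs for distinct $k$, so $\{V_k\}_{k=2}^d$ is independent; $V_1$ is diagonal and hence independent from all of them, giving $CR(\Phi)=d$. Finally, and this is the crux, I would show $\{V_i^*V_j\}_{i,j=1}^d$ is linearly independent — a family of $d^2$ matrices in $\M{d}$, so it must in fact be a basis. I would organize the products by type: $V_1^*V_1$ is a specific diagonal matrix; for $k\ge 2$, $V_1^*V_k=\sqrt{\tfrac{d-2}{d-1}}\cdot\tfrac{1}{\sqrt{d-1}}E_{k1}$ (a scalar multiple of $E_{k1}$) and $V_k^*V_1$ is a scalar multiple of $E_{1k}$; and for $k,\ell\ge 2$, $V_k^*V_\ell=\tfrac{1}{d-1}(E_{1k}+E_{k1})(E_{1\ell}+E_{\ell 1})=\tfrac{1}{d-1}(\delta_{k\ell}E_{11}+E_{k\ell})$ when one tracks which matrix-unit products survive. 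So the span of $\{V_i^*V_j\}$ contains all $E_{1k}$ and $E_{k1}$ for $k\ge 2$ (from the $V_1^*V_k$, $V_k^*V_1$ terms), all $E_{k\ell}$ for $k\ne\ell$ with $k,\ell\ge 2$ (from $V_k^*V_\ell$, $k\ne\ell$), and, combining $V_1^*V_1$ with the $d-1$ matrices $V_k^*V_k=\tfrac1{d-1}(E_{11}+E_{kk})$, all $d$ diagonal units $E_{11},\dots,E_{dd}$ (since the $(d-1)\times d$ incidence relations plus the extra diagonal vector $V_1^*V_1$ have full rank). That exhibits all $d^2$ standard matrix units in the span, forcing linear independence of the $d^2$ products.

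\textbf{Main obstacle.} The routine parts are the $\sum V_j^*V_j=I_d$ check and the independence of $\{V_j\}$; the real work is the bookkeeping in the last step, where one must carefully count which matrix-unit products $E_{ab}E_{cd}=\delta_{bc}E_{ad}$ are nonzero and check that the diagonal sector genuinely has full rank — i.e. that the vectors $e_1+e_k$ ($2\le k\le d$) in $\mathbb{C}^d$ together with the single vector coming from $V_1^*V_1$ (proportional to $e_2+\dots+e_d$) span $\mathbb{C}^d$, which they do since $e_1+e_k$ for $k=2,\dots,d$ already span a $(d-1)$-dimensional subspace and $V_1^*V_1$ is not in it. I would present this as a short lemma that the $d^2$ products fall into blocks whose spans cover all matrix units, and conclude via a dimension count. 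Once both Choi-type conditions hold, extremality in $\msc{CP}(\M{d},\M{d};I_d)$ follows from Theorem \ref{choi}, and extremality in $\msc{CP}(\M{d},\M{d};I_d,I_d)$ is then automatic.
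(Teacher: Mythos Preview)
Your proposal is correct. The paper does not supply its own proof of this statement; it is quoted verbatim from \cite{Ohno10} as background, so there is no in-paper argument to compare against. Your approach---computing all products $V_i^*V_j$ explicitly, separating them into the off-diagonal blocks (each a nonzero scalar multiple of a single matrix unit $E_{k1}$, $E_{1k}$, or $E_{k\ell}$) and the diagonal block (the $d$ vectors $e_2+\cdots+e_d$ and $e_1+e_k$, $2\le k\le d$, in $\mathbb{C}^d$), and then checking that the diagonal block has full rank---is exactly the natural way to verify Choi's criterion here and matches Ohno's original argument. The only place the hypothesis $d\ge 3$ enters is that the scalar $\sqrt{d-2}$ in $V_1^*V_k$ and $V_k^*V_1$ must be nonzero, and that $e_2+\cdots+e_d$ lies outside $\operatorname{span}\{e_1+e_k:2\le k\le d\}=\{v:v_1=v_2+\cdots+v_d\}$, both of which you have implicitly used and which fail when $d=2$.
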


\begin{rmk}
	Given $d\geq 3$, in view of Theorem \ref{thm-extrem-rank-lowerbound}, we have $\msc{MR}(\frac{I_d}{d},\frac{I_d}{d})\geq d$. It is also noteworthy that the Kraus operators of the extreme point $\Phi$, given in Theorem \ref{thm-extrem-rank-lowerbound}, are Hermitian.
\end{rmk}

   Given $A_j\in\M{d_j}$ for $j=1,2$, Rudolph \cite{Ru04} posed a question regarding the tightness of the bound given by K. R. Parthasarathy for $d_1 = d_2 \geq 3$, that is, whether $\msc{MR}(A_1, A_2)=\lfloor{\sqrt{d_1^2 + d_2^2 - 1}}\rfloor$ for $d_1 = d_2 \geq 3$. In $2010$, Ohno \cite{Ohno10} proved that the upper bound given by K. R. Parthasarathy is indeed sharp for $d_1 = d_2 = 3$ and $d_1 = d_2 = 4$. Most precisely, he obtained the following results.

   \begin{thm}[Theorem 2.6, \cite{Ohno10}]\label{thm-extreme-rank-4}
   	Let $\Phi:\M{3}\to\M{3}$ be a CP map defined by 
   	\begin{align}\label{eq-extreme-rank-4-2}
   		\Phi(X)=\frac{1}{4}\sum_{j=1}^{4}W_j^*XW_j=\frac{1}{4}\Matrix{x_{11}+2x_{22}+x_{33}&\sqrt{6}x_{23}&\sqrt{2}x_{31}\\ \sqrt{6}x_{32}&x_{11}+3x_{33}&\sqrt{2}x_{12}\\ \sqrt{2}x_{13}&\sqrt{2}x_{21}&2x_{11}+2x_{22}},
   	\end{align}
   	for all $X=[x_{ij}]\in\M{3}$, where
   	\small
   	\begin{equation}\label{eq-extreme-rank-4}
   		\begin{cases}
   			W_1&=E_{11}, \\
   			W_2&=E_{12}+\sqrt{2}E_{23},\\
   			W_3&=\sqrt{2}E_{21}+\sqrt{3}E_{32},\\
   			W_4&=E_{31}+\sqrt{2}E_{13}.  
   		\end{cases}
   	\end{equation}
   	\normalsize
   	Then $\Phi$ is an extreme point of the set $\msc{CP}(\M{3},\M{3}; I_{3}, I_{3})$ with $CR(\Phi)=4$.
   \end{thm}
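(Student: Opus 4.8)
The plan is to verify directly that the given $\Phi$ satisfies the hypotheses of Theorem \ref{choi} (equivalently Remark \ref{remark}), since by the chain of reductions in the excerpt, an extreme point of $\msc{CP}(\M{3},\M{3};I_3)$ that also lies in $\msc{CP}(\M{3},\M{3};I_3,I_3)$ is automatically an extreme point of the smaller set. Concretely, I would first check that $\Phi\in\msc{CP}(\M{3},\M{3};I_3,I_3)$: using the explicit matrix formula \eqref{eq-extreme-rank-4-2}, one computes $\Phi(I_3)=\frac14\,\mathrm{diag}(4,4,4)=I_3$, and one computes $\Phi^*$ from the Kraus operators via $\Phi^*=\frac14\sum_j\mathrm{Ad}_{W_j^*}$, then checks $\Phi^*(I_3)=\frac14\sum_j W_jW_j^*=I_3$. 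Both are short $3\times 3$ computations with the $W_j$ in \eqref{eq-extreme-rank-4}.

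Next I would argue that $\{W_1,W_2,W_3,W_4\}$ is linearly independent, so that the given Kraus decomposition is minimal and $CR(\Phi)=4$; this is immediate because $W_1=E_{11}$, $W_2\in\mathrm{span}\{E_{12},E_{23}\}$, $W_3\in\mathrm{span}\{E_{21},E_{32}\}$, $W_4\in\mathrm{span}\{E_{13},E_{31}\}$ involve disjoint sets of matrix units. The main work is then to show that the $16$-element family $\{W_i^*W_j\}_{i,j=1}^4\subseteq\M{3}$ is linearly independent. I would compute each product $W_i^*W_j$ explicitly in terms of the matrix units $E_{k\ell}$ — for instance $W_1^*W_1=E_{11}$, $W_2^*W_2=E_{22}+2E_{33}$, $W_1^*W_2=E_{12}$, $W_2^*W_3=\sqrt3\,E_{13}$ $+\sqrt2\,E_{?}$ (one tabulates all $16$) — and then arrange the coordinates of these $16$ matrices against the $9$-dimensional basis $\{E_{k\ell}\}$ into a matrix and check that it has rank $16$ over the $16$ products... but wait, $\{W_i^*W_j\}$ lives in the $9$-dimensional space $\M{3}$, so $16$ vectors there can never be linearly independent. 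Hence Theorem \ref{choi} in its naive form cannot be what is used; instead the correct statement being invoked must be the \emph{minimal} version together with the \emph{bi-linear} independence of Theorem \ref{thm-extr-condition-for doubly}: one checks that the pair $\{W_i^*W_j\}_{i,j=1}^4\subseteq\M{3}$ and $\{W_jW_i^*\}_{i,j=1}^4\subseteq\M{3}$ is bi-linearly independent, i.e. the $16$ vectors $(W_i^*W_j,\,W_jW_i^*)$ in the $18$-dimensional space $\M{3}\times\M{3}$ are linearly independent.

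So the real computational heart is: tabulate all $16$ products $W_i^*W_j$ and all $16$ products $W_jW_i^*$ (the latter obtained similarly, or as adjoints of the former under $i\leftrightarrow j$), stack the resulting $16$ pairs as rows of a $16\times 18$ matrix over $\mathbb{C}$, and verify this matrix has rank $16$ — e.g. by exhibiting $16$ columns forming an invertible submatrix, or by a direct dependency argument: suppose $\sum_{i,j}\lambda_{ij}W_i^*W_j=0$ and $\sum_{i,j}\lambda_{ij}W_jW_i^*=0$, expand both in the matrix-unit basis, and read off that every $\lambda_{ij}=0$. The block structure of the $W_j$ (supported on the "cross" of row/column $1$ plus a shifted diagonal) makes many products land in distinct one-dimensional or two-dimensional slots, so the linear system decouples into small pieces; the diagonal entries coming from $W_i^*W_i$ versus the off-diagonal entries coming from $W_i^*W_j$, $i\neq j$, separate cleanly. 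I expect the main obstacle to be purely bookkeeping: keeping the $\sqrt2,\sqrt3,\sqrt6$ normalization factors straight across all $32$ products and organizing the $16\times 18$ dependency system so that the vanishing of all $\lambda_{ij}$ is transparent rather than a brute-force determinant. Once bi-linear independence is established, Theorem \ref{thm-extr-condition-for doubly} gives that $\Phi$ is an extreme point of $\msc{CP}(\M{3},\M{3};I_3,I_3)$ with $CR(\Phi)=4$, completing the proof.
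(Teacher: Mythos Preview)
The paper does not prove this theorem: it is quoted verbatim as a result of Ohno \cite{Ohno10} and used as a black box (e.g.\ in Theorems \ref{thm-krpbound-M9} and \ref{thm-CR-16 in 12}), so there is no ``paper's own proof'' to compare against. Your eventual plan---verify $\Phi(I_3)=\Phi^*(I_3)=I_3$, check $\{W_j\}$ is linearly independent so $CR(\Phi)=4$, then tabulate all $W_i^*W_j$ and $W_jW_i^*$ and show bi-linear independence via Theorem \ref{thm-extr-condition-for doubly}---is correct and is exactly the template the paper follows in the Appendix (Lemmas \ref{lem-CR-6-appendix} and \ref{lem-CR-7-appendix}) for the analogous $d=5$ constructions.

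One clarification about your mid-course correction: you are right that Theorem \ref{choi} cannot apply here, since sixteen elements of the nine-dimensional space $\M{3}$ can never be linearly independent; but your phrasing ``the \emph{minimal} version together with the bi-linear independence'' conflates two different things. Remark \ref{remark} (the minimal-decomposition version of Choi's criterion) still asks for $\{W_i^*W_j\}$ alone to be linearly independent, which is impossible here. What you actually need---and what you correctly land on---is only Theorem \ref{thm-extr-condition-for doubly}, the bi-linear independence criterion for the doubly constrained set $\msc{CP}(\M{3},\M{3};I_3,I_3)$. That is the sole criterion to invoke; there is no role for Theorem \ref{choi} or Remark \ref{remark} in this particular proof. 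With that adjustment, your outline is complete and would go through as a routine $16\times 18$ linear-algebra verification.
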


    \begin{thm}[Theorem 2.7, \cite{Ohno10}]\label{thm-extreme-rank-5}
	Let $\Phi:\M{4}\to\M{4}$ be a CP map defined by 
  	\small
  	\begin{align}
  		\Phi(X)=\frac{1}{5}\sum_{j=1}^{4}W_j^*XW_j=\frac{1}{4}\Matrix{x_{22}+3x_{33}&\sqrt{2}x_{24}&0&\sqrt{6}x_{31}\\ \sqrt{2}x_{42}&x_{11}+x_{33}+2x_{44}&x_{12}+x_{31}&0\\ 0&x_{13}+x_{21}&x_{11}+x_{22}+2x_{44}&2x_{42}\\\sqrt{6}x_{13}&0&2x_{24}&2x_{11}+2x_{22}},
  	\end{align}
	\normalsize
	for all $X=[x_{ij}]\in\M{4}$, where
   	\small
   	\begin{equation}\label{eq-extreme-rank-5}
  		\begin{cases}
  			W_1&=E_{13}+E_{32}, \\
   			W_2&=\sqrt{2}E_{24}+\sqrt{2}E_{43},\\
  			W_3&=\sqrt{2}E_{14}+\sqrt{3}E_{31},\\
  			W_4&=E_{21}+\sqrt{2}E_{42}, \\
  			W_5&=E_{12}+E_{23}
 		\end{cases}
	\end{equation}
	\normalsize
	Then $\Phi$ is an extreme point of the set $\msc{CP}(\M{4},\M{4}; I_{4}, I_{4})$ with $CR(\Phi)=5$.
 \end{thm}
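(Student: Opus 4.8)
The plan is to establish the three assertions of the statement — namely $\Phi\in\msc{CP}(\M{4},\M{4};I_4,I_4)$, $CR(\Phi)=5$, and that $\Phi$ is an extreme point — directly from the Kraus operators $W_1,\dots,W_5$, using the normalisation $\Phi=\sum_{j=1}^{5}\mathrm{Ad}_{W_j/2}$ (equivalently, the displayed $4\times 4$ matrix for $\Phi(X)$, expanded term by term). Complete positivity is automatic from the Kraus form. To see $\Phi(I_4)=I_4$ I would check $\sum_{j=1}^{5}W_j^*W_j=4I_4$, and to see $\Phi^*(I_4)=I_4^{\T}=I_4$ I would check $\sum_{j=1}^{5}W_jW_j^*=4I_4$; both are short computations via $E_{ab}E_{cd}=\delta_{bc}E_{ad}$, and both can also be read off the matrix expression, which is visibly unital and has the sum of its diagonal entries equal to $\tr X$. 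For the Choi-rank, the point is that $W_1,\dots,W_5$ involve the ten \emph{pairwise distinct} matrix units $E_{13},E_{32},E_{24},E_{43},E_{14},E_{31},E_{21},E_{42},E_{12},E_{23}$, so $\{W_j\}_{j=1}^{5}$ is linearly independent, the given Kraus decomposition is minimal, and $CR(\Phi)=5$ by Proposition \ref{prop-choi-rank}.

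For extremality I would invoke Theorem \ref{thm-extr-condition-for doubly}: it suffices to show, for the given (minimal) Kraus decomposition, that the pair of $25$-element families $\{W_i^*W_j\}_{i,j=1}^{5}\subseteq\M{4}$ and $\{W_jW_i^*\}_{i,j=1}^{5}\subseteq\M{4}$ is bi-linearly independent, i.e.\ that scalars $\lambda_{ij}$ with $\sum_{i,j}\lambda_{ij}W_i^*W_j=0$ \emph{and} $\sum_{i,j}\lambda_{ij}W_jW_i^*=0$ must all vanish. It is worth observing beforehand that Choi's simpler criterion (Theorem \ref{choi}, which would require $\{W_i^*W_j\}$ alone to be linearly independent) is not available here: $25$ matrices cannot be independent in the $16$-dimensional algebra $\M{4}$, and indeed $W_1^*W_1=W_5^*W_5=E_{22}+E_{33}$, so the joint condition is genuinely needed. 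The computational input is the table of all $50$ products; since each $W_j$ is a sum of two matrix units, every product is $0$, a single matrix unit, or a sum of two matrix units.

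The linear-algebra step I expect to run in two passes, with the two matrix equations playing complementary roles. Imposing $\sum_{i,j}\lambda_{ij}W_i^*W_j=0$ and comparing coefficients of the off-diagonal matrix units forces $\lambda_{ij}=0$ for precisely the twelve index pairs $(i,j)$, $i\neq j$, with $W_i^*W_j\neq0$ (most such products are supported on a matrix unit touched by no other term, which makes these deductions essentially immediate); comparing the coefficients of $E_{11},E_{22},E_{33},E_{44}$ then leaves a homogeneous system in the diagonal coefficients whose only solutions are $\lambda_{22}=\lambda_{33}=\lambda_{44}=0$ and $\lambda_{11}=-\lambda_{55}$. The pairs on which the first equation says nothing are exactly the eight pairs $(i,j)$, $i\neq j$, with $W_i^*W_j=0$ — and for precisely these eight pairs $W_jW_i^*$ turns out to be a nonzero scalar multiple of a single matrix unit. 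Hence imposing $\sum_{i,j}\lambda_{ij}W_jW_i^*=0$ and again comparing coefficients forces $\lambda_{55}=0$ (hence $\lambda_{11}=0$) and $\lambda_{ij}=0$ for those eight remaining pairs. Therefore all $\lambda_{ij}=0$, the pair of families is bi-linearly independent, and $\Phi$ is an extreme point of $\msc{CP}(\M{4},\M{4};I_4,I_4)$.

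The only genuine difficulty I anticipate is organisational: computing and tabulating the fifty products $W_i^*W_j$ and $W_jW_i^*$ without error, and keeping track of which of the two matrix equations pins down which index pair. What makes this manageable — presumably the reason these particular operators were chosen — is that the supports of the $W_j$ are arranged so that almost every product is a single matrix unit; the bi-linear independence system then decomposes into a handful of one-variable equations plus one small diagonal system, with the two matrix equations governing disjoint families of $\lambda_{ij}$'s.
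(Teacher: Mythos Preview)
The paper does not supply its own proof of this theorem: it is quoted verbatim as Theorem~2.7 of \cite{Ohno10} in the preliminaries section and is used only as input to later constructions. That said, your proposal is correct and is precisely the template the paper itself follows when proving its \emph{own} analogous results (Lemmas~\ref{lem-CR-6-appendix} and~\ref{lem-CR-7-appendix} in the Appendix, which establish bi-linear independence for the $d_1=d_2=5$ examples): tabulate all products $W_i^*W_j$ and $W_jW_i^*$, write out $\sum_{i,j}a_{ij}W_i^*W_j=0$ and $\sum_{i,j}a_{ij}W_jW_i^*=0$ as explicit matrix equations, and read off $a_{ij}=0$ by comparing entries. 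Your more structured two-pass organisation --- first killing the twelve off-diagonal $\lambda_{ij}$ with $W_i^*W_j\neq 0$ and reducing the diagonal block to $\lambda_{11}=-\lambda_{55}$, then using the second equation to finish off the remaining eight off-diagonal coefficients and the residual $\lambda_{11},\lambda_{55}$ --- is a cleaner bookkeeping of the same computation, and your counts ($12$ nonzero and $8$ zero among the off-diagonal $W_i^*W_j$, with the complementary $W_jW_i^*$ all nonzero single matrix units) check out.
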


   \begin{rmk}
   	In view of Theorem \ref{thm-extreme-rank-4}, \ref{thm-extreme-rank-5}, we have $\msc{MR}(\frac{I_3}{3},\frac{I_3}{3})=4=\lfloor{\sqrt{3^2+3^2-1}}\rfloor$ and $\msc{MR}(\frac{I_4}{4},\frac{I_4}{4})=5=\lfloor{\sqrt{4^2+4^2-1}}\rfloor$, respectively.
   \end{rmk}

   \section{Tensor product of extreme points}\label{tensor product}

   In this section, we examine the conditions under which the tensor product of two extreme points is an extreme point. We start with a straightforward proposition that is readily apparent. For thoroughness, we include the details.

   \begin{prop}\label{prop-LI-functionals}
   	Let $V$ be a finite-dimensional vector space, and $\{v_i\}_{i=1}^n$ be any subset. Then the subset $\{v_i\}_{i=1}^n$ is linearly independent if and only if there exists a set of linear functionals $\{f_j\}_{j=1}^n$ on $V$ such that $f_j(v_i)=\delta_{ij}$ for all $ i,j$, where $$\delta_{ij}=\Big\{\begin{array}{cc}
   		1  & \mbox{if } \; i=j,  \\
   		0 & \mbox{other wise. }
   	\end{array}$$
   \end{prop}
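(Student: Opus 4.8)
## Proof Plan

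The plan is to prove both directions of this standard dual-basis characterization. For the forward direction, suppose $\{v_i\}_{i=1}^n$ is linearly independent. Then I would extend it to a basis $\{v_1,\dots,v_n,v_{n+1},\dots,v_m\}$ of the finite-dimensional space $V$ (using that every linearly independent set in a finite-dimensional vector space extends to a basis). Relative to this basis there is a uniquely determined dual basis $\{f_1,\dots,f_m\}$ of $V^*$ satisfying $f_j(v_i)=\delta_{ij}$ for all $1\le i,j\le m$; restricting attention to $\{f_j\}_{j=1}^n$ gives the desired functionals, since in particular $f_j(v_i)=\delta_{ij}$ for all $1\le i,j\le n$.

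For the reverse direction, suppose there exist linear functionals $\{f_j\}_{j=1}^n$ on $V$ with $f_j(v_i)=\delta_{ij}$. To see that $\{v_i\}_{i=1}^n$ is linearly independent, take scalars $\lambda_1,\dots,\lambda_n\in\mbb{C}$ with $\sum_{i=1}^n\lambda_i v_i=0$. Applying $f_j$ to both sides and using linearity, $0=f_j\big(\sum_{i=1}^n\lambda_i v_i\big)=\sum_{i=1}^n\lambda_i f_j(v_i)=\sum_{i=1}^n\lambda_i\delta_{ij}=\lambda_j$, for each $j$. Hence all $\lambda_j=0$, proving linear independence.

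There is no real obstacle here; the only point requiring the finite-dimensionality hypothesis is the extension of a linearly independent set to a basis (and the existence of a dual basis) in the forward direction, and this is a routine fact. I would present the argument essentially as above, perhaps noting that the functionals $f_j$ in the reverse direction need not be a dual basis of any particular basis — any functionals with the stated interpolation property suffice.
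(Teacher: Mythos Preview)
Your proof is correct and follows essentially the same approach as the paper: for the forward direction you extend to a basis and take (the first $n$ of) the dual functionals, and for the reverse direction you apply each $f_j$ to a vanishing linear combination to kill the coefficients. The only cosmetic difference is that the paper defines each $f_j$ directly on the extended basis (value $1$ at $v_j$, zero elsewhere) rather than invoking the dual basis by name, which is the same construction.
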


   \begin{proof}
   	Let $\{f_j\}_{j=1}^n$ be the set of linear functionals on $V$ such that $f_j(v_i)=\delta_{ij}$ for all $1\leq i,j\leq n$.  Suppose $\lambda_i\in\mbb{C}$ such that $\sum_{i=1}^n\lambda_iv_i=0$. Fix $1\leq i_0\leq n$, then $f_{i_0}(\sum_{i}\lambda_iv_i)=0$. This implies that $\lambda_{i_0}=0$. Since $i_0$ is arbitrary, we conclude that $\lambda_i=0$ for all $i$. This shows that the set $\{v_i\}_{i=1}^n$ is linearly independent. Conversely, if $\{v_i\}_{i=1}^n$ is linearly independent, then extend $\{v_i\}_{i=1}^n$ to a basis $\msc{B}$ of $V$. Now, for each $1\leq j\leq n$, define the linear functional on the basis elements as $f_j(v_j)=1$ and zero for all other basis elements. This will give the set of linear functionals $\{f_j\}_{j=1}^n$ such that $f_j(v_i)=\delta_{ij}$ for all $1\leq i,j\leq n$.
   \end{proof}

   \begin{lem}\label{lem-tensor-LI}
   	Let $V, W$ be two finite-dimensional vector spaces. If $\{v_i\}_{i=1}^{n_1}$ and $\{w_j\}_{j=1}^{n_2}$ are linearly independent subsets of $V$ and $W$ respectively, then $\{v_i\otimes w_j: 1\leq i\leq n_1, 1\leq j\leq n_2\}$ is a linearly independent subset of the vector space $V\otimes W$.
   \end{lem}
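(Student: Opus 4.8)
The plan is to prove this by exhibiting a collection of linear functionals on $V \otimes W$ that detects the products $v_i \otimes w_j$ in the manner of Proposition \ref{prop-LI-functionals}, thereby reducing the tensor-product statement to the single-space statement already proved. First I would invoke Proposition \ref{prop-LI-functionals} in the forward direction applied to $\{v_i\}_{i=1}^{n_1} \subseteq V$: since this set is linearly independent, there exist linear functionals $\{f_i\}_{i=1}^{n_1}$ on $V$ with $f_i(v_k) = \delta_{ik}$. Likewise, linear independence of $\{w_j\}_{j=1}^{n_2} \subseteq W$ yields linear functionals $\{g_j\}_{j=1}^{n_2}$ on $W$ with $g_j(w_\ell) = \delta_{j\ell}$.

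Next I would form, for each pair $(i,j)$, the bilinear map $(v,w) \mapsto f_i(v)\, g_j(w)$ on $V \times W$, which by the universal property of the tensor product factors through a unique linear functional $h_{ij}$ on $V \otimes W$ satisfying $h_{ij}(v \otimes w) = f_i(v) g_j(w)$ for all $v \in V$, $w \in W$. Evaluating on the candidate basis elements gives $h_{ij}(v_k \otimes w_\ell) = f_i(v_k) g_j(w_\ell) = \delta_{ik}\delta_{j\ell}$, which is exactly the Kronecker delta for the index set $\{(i,j)\}$ (under the identification of the pair $(i,j)$ with a single label running over $1 \le i \le n_1$, $1 \le j \le n_2$). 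Applying the converse direction of Proposition \ref{prop-LI-functionals} to the family $\{v_i \otimes w_j\}$ in $V \otimes W$ with the detecting functionals $\{h_{ij}\}$ then yields that $\{v_i \otimes w_j : 1 \le i \le n_1,\ 1 \le j \le n_2\}$ is linearly independent, completing the proof.

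The only point requiring a little care — and the closest thing to an obstacle — is the passage from the bilinear maps $(v,w) \mapsto f_i(v) g_j(w)$ to genuine linear functionals $h_{ij}$ on $V \otimes W$; this is precisely the universal property of the algebraic tensor product and is entirely standard, but it is the step where one must be explicit rather than hand-wave. An alternative that avoids even mentioning the universal property is to note that $\{f_i \otimes g_j\}$ lies in $V^* \otimes W^* \subseteq (V \otimes W)^*$ (finite dimensions), with the pairing $\langle f_i \otimes g_j, v_k \otimes w_\ell \rangle = \delta_{ik}\delta_{j\ell}$; either phrasing works, and both reduce the claim cleanly to Proposition \ref{prop-LI-functionals}.
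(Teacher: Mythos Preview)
Your proof is correct and follows essentially the same approach as the paper: both obtain dual functionals $f_i$ and $g_j$ from Proposition \ref{prop-LI-functionals}, form the tensor functionals $f_i\otimes g_j$ on $V\otimes W$, and use the resulting Kronecker-delta evaluations to conclude linear independence. The only cosmetic difference is that you package the final step as an appeal to the converse direction of Proposition \ref{prop-LI-functionals}, whereas the paper unwinds that direction explicitly by applying $f_{i_0}\otimes g_{j_0}$ to a vanishing linear combination.
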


   \begin{proof}
   	Let $\{v_i\}_{i=1}^{n_1}$ and $\{w_j\}_{j=1}^{n_2}$ be linearly independent subsets of $V$ and $W$ respectively. Then, by Proposition \ref{prop-LI-functionals}, there exists the set of linear functionals $\{f_k\}_{k=1}^{n_1}$ and $\{g_l\}_{l=1}^{n_2}$ such that 
   	\begin{align*}
   		f_k(v_i)&=\delta_{ik} \mbox{ for all } 1\leq i,k\leq n_1,\\
   		g_l(w_j)&=\delta_{jl} \mbox{ for all } 1\leq j,l\leq n_2.
   	\end{align*}
   	Let $\lambda_{ij}\in\mbb{C}$ such that $\sum_{i,j}\lambda_{ij}v_i\otimes w_j=0$. Fix $1\leq i_0\leq n_1, 1\leq j_0\leq n_2$, then 
   	\begin{align*}
   		0=f_{i_0}\otimes g_{j_0}\left(\sum_{i,j}\lambda_{ij}v_i\otimes w_j\right)=\lambda_{i_0j_0}.
   	\end{align*}
   	Since $i_0,j_0$ are arbitrary, we conclude that $\lambda_{ij}=0$ for all  $1\leq i\leq n_1, 1\leq j\leq n_2$. This shows that the set $\{v_i\otimes w_j: 1\leq i\leq n_1, 1\leq j\leq n_2\}$ is a linearly independent subset of the vector space $V\otimes W$.
   \end{proof}

   \begin{lem}\label{lem-product-Choirank}
   	Let $\Phi:\M{d_1}\to\M{d_2}$ and $\Psi:\M{d_3}\to\M{d_4}$ be two CP maps. Then $\Phi\otimes \Psi$ is a CP map with $CR(\Phi\otimes\Psi)=CR(\Phi)CR(\Psi)$.
   \end{lem}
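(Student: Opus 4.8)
The plan is to build a minimal Kraus decomposition of $\Phi\otimes\Psi$ directly out of minimal Kraus decompositions of $\Phi$ and $\Psi$, and then read off the Choi-rank.

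First I would fix minimal Kraus decompositions
\[
\Phi=\sum_{i=1}^{n}\mathrm{Ad}_{V_i},\qquad \Psi=\sum_{j=1}^{m}\mathrm{Ad}_{W_j},
\]
with $V_i\in\M{d_1\times d_2}$, $W_j\in\M{d_3\times d_4}$, $n=CR(\Phi)$ and $m=CR(\Psi)$; by the criterion recalled in Section \ref{pre} (a Kraus decomposition is minimal precisely when its Kraus operators are linearly independent), the sets $\{V_i\}_{i=1}^{n}$ and $\{W_j\}_{j=1}^{m}$ are each linearly independent. Identifying $\M{d_1\times d_2}\otimes\M{d_3\times d_4}$ with $\M{d_1d_3\times d_2d_4}$ and using the elementary identities $(V\otimes W)^*=V^*\otimes W^*$ and $(A\otimes B)(C\otimes D)=AC\otimes BD$ for rectangular matrices of compatible sizes, one checks that $\mathrm{Ad}_{V_i\otimes W_j}=\mathrm{Ad}_{V_i}\otimes\mathrm{Ad}_{W_j}$, and hence, evaluating on a simple tensor $X=A\otimes B$,
\[
\sum_{i,j}\mathrm{Ad}_{V_i\otimes W_j}(A\otimes B)=\sum_{i,j}(V_i^*AV_i)\otimes(W_j^*BW_j)=\Big(\sum_{i}V_i^*AV_i\Big)\otimes\Big(\sum_{j}W_j^*BW_j\Big)=\Phi(A)\otimes\Psi(B).
\]
Since simple tensors span $\M{d_1}\otimes\M{d_3}$ and both sides are linear in $X$, this yields $\Phi\otimes\Psi=\sum_{i,j}\mathrm{Ad}_{V_i\otimes W_j}$; in particular, by the Choi--Kraus theorem (Corollary \ref{cor-choi-kraus-rep}), $\Phi\otimes\Psi$ is CP.

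Next I would show that the decomposition just obtained is already minimal. This reduces to checking that $\{V_i\otimes W_j:1\leq i\leq n,\ 1\leq j\leq m\}$ is a linearly independent subset of $\M{d_1\times d_2}\otimes\M{d_3\times d_4}$, which is exactly Lemma \ref{lem-tensor-LI} applied to the linearly independent sets $\{V_i\}_{i=1}^{n}$ and $\{W_j\}_{j=1}^{m}$. Invoking once more the minimality criterion, we conclude $CR(\Phi\otimes\Psi)=nm=CR(\Phi)CR(\Psi)$.

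I do not expect a genuine obstacle here; the argument is essentially bookkeeping resting on Lemma \ref{lem-tensor-LI} and Corollary \ref{cor-choi-kraus-rep}. The one step that deserves a careful line is the compatibility of the tensor product of rectangular matrices with adjoints and with matrix multiplication, so that $\mathrm{Ad}_{V\otimes W}=\mathrm{Ad}_V\otimes\mathrm{Ad}_W$ holds under the identification $\M{d_1\times d_2}\otimes\M{d_3\times d_4}\cong\M{d_1d_3\times d_2d_4}$.
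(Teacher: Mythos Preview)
Your proposal is correct and follows essentially the same route as the paper: take minimal Kraus decompositions of $\Phi$ and $\Psi$, verify on simple tensors that $\Phi\otimes\Psi=\sum_{i,j}\mathrm{Ad}_{V_i\otimes W_j}$ (hence CP by Corollary \ref{cor-choi-kraus-rep}), and then invoke Lemma \ref{lem-tensor-LI} to conclude that $\{V_i\otimes W_j\}$ is linearly independent, so that $CR(\Phi\otimes\Psi)=CR(\Phi)CR(\Psi)$. The paper's proof is the same argument with the same two references.
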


   \begin{proof}
   	Let $\Phi$ and $\Psi$ be two CP maps with $CR(\Phi)=r_1$ and $CR(\Psi)=r_2$. This means that, there exist linearly independent subsets $\{V_i\}_{i=1}^{r_1}\subseteq\M{d_1\times d_2}$ and $\{W_j\}_{j=1}^{r_2}\subseteq\M{d_3\times d_4}$ such that
   	
   	\begin{align*}
   		\Phi=\sum_{i=1}^{r_1}\mathrm{Ad}_{V_i}\quad\text{and}\quad
   		\Psi=\sum_{j=1}^{r_2}\mathrm{Ad}_{W_j}.
   	\end{align*}
   	
   	Now, for any $A\in\M{d_1}$ and $B\in\M{d_3}$, we have
   	
   	\begin{align*}
   		\Phi\otimes\Psi(A\otimes B) &= \sum_{i=1}^{r_1}\mathrm{Ad}_{V_i}(A)\otimes\sum_{j=1}^{r_2}\mathrm{Ad}_{W_j}(B) \\
   		&= \left(\sum_{i=1}^{r_1}V_i^*AV_i\right)\otimes\left(\sum_{j=1}^{r_2}W_j^*BW_j\right) \\
   		&= \sum_{i,j}(V_i\otimes W_j)^*(A\otimes B)(V_i\otimes W_j).
   	\end{align*}
   	
   	Since every element of $\M{d_1}\otimes\M{d_3}$ is a finite sum of simple tensors, we conclude that $\Phi\otimes\Psi(Z) = \sum_{i,j}(V_i\otimes W_j)^*Z(V_i\otimes W_j)$ for all $Z\in\M{d_1}\otimes\M{d_3}$. Therefore, by Corollary \ref{cor-choi-kraus-rep}, $\Phi\otimes\Psi$ is a CP map. Moreover, from Lemma \ref{lem-tensor-LI}, we have that $\{V_i\otimes W_j: 1\leq i\leq r_1, 1\leq j\leq r_2\}$ is linearly independent. Hence $CR(\Phi\otimes\Psi)=r_1r_2=CR(\Phi)CR(\Psi)$.
   \end{proof}

   \begin{thm}
   	Let $A\in\M{d_2},B\in\M{d_4}, \Phi\in\msc{CP}[\M{d_1},\M{d_2}; A]$ and  $\Psi\in\msc{CP}[\M{d_3},\M{d_4}; B]$. If $\Phi$ and $\Psi$ are extreme points of the set $\msc{CP}[\M{d_1},\M{d_2}; A]$ and $\msc{CP}[\M{d_3},\M{d_4};B]$ respectively, then $\Phi\otimes \Psi$ is also an extreme point of the set $\msc{CP}[\M{d_1}\otimes\M{d_3},\M{d_2}\otimes\M{d_4}; A\otimes B]$.
   \end{thm}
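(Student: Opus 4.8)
The plan is to reduce everything to Choi's extremality criterion, Theorem \ref{choi}, combined with the two lemmas just proved. Since $\Phi$ is an extreme point of $\msc{CP}[\M{d_1},\M{d_2}; A]$, Theorem \ref{choi} (equivalently, Remark \ref{remark} applied to a minimal Kraus decomposition) furnishes a Kraus decomposition $\Phi=\sum_{i=1}^{r_1}\mathrm{Ad}_{V_i}$ for which the family $\{V_i^*V_j\}_{i,j=1}^{r_1}\subseteq\M{d_2}$ is linearly independent; likewise, extremality of $\Psi$ gives $\Psi=\sum_{k=1}^{r_2}\mathrm{Ad}_{W_k}$ with $\{W_k^*W_l\}_{k,l=1}^{r_2}\subseteq\M{d_4}$ linearly independent. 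These are the two inputs I would start from.

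Next I would write down the natural Kraus decomposition of $\Phi\otimes\Psi$. By the computation carried out in the proof of Lemma \ref{lem-product-Choirank}, one has $\Phi\otimes\Psi=\sum_{i,k}\mathrm{Ad}_{V_i\otimes W_k}$, and $\Phi\otimes\Psi$ is CP by that same lemma. Moreover $(\Phi\otimes\Psi)(I_{d_1}\otimes I_{d_3})=\Phi(I_{d_1})\otimes\Psi(I_{d_3})=A\otimes B$, so $\Phi\otimes\Psi\in\msc{CP}[\M{d_1}\otimes\M{d_3},\M{d_2}\otimes\M{d_4}; A\otimes B]$. Thus the only remaining point is to verify the linear independence condition of Theorem \ref{choi} for this particular decomposition.

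The key step is this verification. Using the identity $(V_i\otimes W_k)^*(V_j\otimes W_l)=(V_i^*V_j)\otimes(W_k^*W_l)$, the family to be tested is $\{(V_i^*V_j)\otimes(W_k^*W_l)\}$, indexed by pairs $((i,k),(j,l))$. Here the double index $(i,j)$ may be regarded as a single index for the linearly independent set $\{V_i^*V_j\}\subseteq\M{d_2}$, and similarly $(k,l)$ for $\{W_k^*W_l\}\subseteq\M{d_4}$; hence Lemma \ref{lem-tensor-LI} applies and shows that $\{(V_i^*V_j)\otimes(W_k^*W_l)\}$ is linearly independent in $\M{d_2}\otimes\M{d_4}$. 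By Theorem \ref{choi} (the direction: exhibiting one Kraus decomposition with linearly independent $\{\,\cdot^*\cdot\,\}$ forces extremality), it follows that $\Phi\otimes\Psi$ is an extreme point of $\msc{CP}[\M{d_1}\otimes\M{d_3},\M{d_2}\otimes\M{d_4}; A\otimes B]$.

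I do not expect a serious obstacle here. The one thing to stay attentive to is that Theorem \ref{choi} characterizes extreme points of the single-constraint set $\msc{CP}[\,\cdot\,;A_2]$, and it is precisely the single constraint $\Phi(I)=A$ (rather than the pair of constraints appearing in Theorem \ref{thm-extr-condition-for doubly}) that is compatible with the tensor product, since $(\Phi\otimes\Psi)(I\otimes I)=\Phi(I)\otimes\Psi(I)$ but there is no equally clean behaviour forced on the dual maps. The only other minor care needed is the index bookkeeping described above, which is routine once one notes that a linearly independent doubly-indexed family is exactly the hypothesis Lemma \ref{lem-tensor-LI} consumes.
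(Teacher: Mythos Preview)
Your proposal is correct and follows essentially the same route as the paper: obtain Kraus decompositions of $\Phi$ and $\Psi$ with $\{V_i^*V_j\}$ and $\{W_k^*W_l\}$ linearly independent via Theorem~\ref{choi}, form $\Phi\otimes\Psi=\sum_{i,k}\mathrm{Ad}_{V_i\otimes W_k}$ (membership in $\msc{CP}[\,\cdot\,;A\otimes B]$ via Lemma~\ref{lem-product-Choirank}), and then apply Lemma~\ref{lem-tensor-LI} to $\{(V_i^*V_j)\otimes(W_k^*W_l)\}$ to invoke Theorem~\ref{choi} once more. Your closing remarks on the single-constraint versus double-constraint distinction are apt but go beyond what the paper records.
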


   \begin{proof}
   	Let $\Phi \in \msc{CP}[\M{d_1}, \M{d_2}; A]$ and $\Psi \in \msc{CP}[\M{d_3}, \M{d_4}; B]$ be extreme points of their respective sets $\msc{CP}[\M{d_1}, \M{d_2}; A]$ and $\msc{CP}[\M{d_3}, \M{d_4}; B]$. By Theorem \ref{choi}, there exist Kraus operators $\{V_j\}_{j=1}^{r_1}\subseteq\M{d_1 \times d_2}$ and $\{W_l\}_{l=1}^{r_2} \subseteq\M{d_3 \times d_4}$ such that $\Phi = \sum_{j=1}^{r_1} \mathrm{Ad}_{V_j}$ and $\Psi = \sum_{l=1}^{r_2} \mathrm{Ad}_{W_l}$, and the sets $\{V_i^*V_j\}_{i,j=1}^{r_1}$ and $\{W_k^*W_l\}_{k,l=1}^{r_2}$ are linearly independent. Given that $\Phi(I_{d_1}) = A$ and $\Psi(I_{d_3}) = B$, therefore $\Phi \otimes \Psi(I_{d_1} \otimes I_{d_3}) = A \otimes B$. Furthermore, from Lemma \ref{lem-product-Choirank}, we have $\Phi \otimes \Psi$ is CP and  hence $\Phi \otimes \Psi \in \msc{CP}[\M{d_1} \otimes \M{d_3}, \M{d_2} \otimes \M{d_4}; A \otimes B]$ with a Kraus decompositon $\sum_{j,l=1}^{r_1,r_2} \mathrm{Ad}_{V_j \otimes W_l}$. Since the sets $\{V_i^*V_j\}_{i,j=1}^{r_1}$ and $\{W_k^*W_l\}_{k,l=1}^{r_2}$ are linearly independent, from Lemma \ref{lem-tensor-LI}, we also have that the set $\{(V_i \otimes W_k)^*(V_j \otimes W_l)\}_{i,j,k,l}$ is linearly independent. Thus, by applying Theorem \ref{choi} again, we conclude that $\Phi \otimes \Psi$ is an extreme point of the set $\msc{CP}[\M{d_1} \otimes \M{d_3}, \M{d_2} \otimes \M{d_4}; A \otimes B]$.
   \end{proof}

   However, given $A_i\in\M{d_i}$ for $i=1,2$ and $B_j\in\M{d_j}$ for $j=3,4$, the tensor product of extreme points of the sets $\msc{CP}[\M{d_1},\M{d_2}; A_1,A_2]$ and $\msc{CP}[\M{d_3},\M{d_4}; B_1,B_2]$ is not necessarily extreme in the set $\msc{CP}[\M{d_1}\otimes\M{d_3},\M{d_2}\otimes\M{d_4}; A_1\otimes B_1,A_2\otimes B_2]$. For example, let $\Phi\in\msc{CP}[\M{3},\M{3}; I_3,I_3]$ and $\Psi\in\msc{CP}[\M{4},\M{4}; I_4,I_4]$ are extreme points of their respective sets $\msc{CP}[\M{3},\M{3}; I_3,I_3]$ and $\msc{CP}[\M{4},\M{4}; I_4,I_4]$ with $CR(\Phi)=4$ and $CR(\Psi)=5$, as given in Theorem \ref{thm-extreme-rank-4} and \ref{thm-extreme-rank-5}. By Lemma \ref{lem-product-Choirank}, $\Phi\otimes\Psi\in \msc{CP}[\M{12},\M{12}; I_{12},I_{12}]$ with $CR(\Phi\otimes\Psi)=20$, . But $\Phi\otimes\Psi$ is not an extreme point of $\msc{CP}[\M{12},\M{12}; I_{12},I_{12}]$ because if so, by the upper bound on the Choi-rank given by K.R. Parthasarathy, we should have $CR(\Phi\otimes\Psi)\leq\lfloor{\sqrt{{12}^2+{12}^2-1}}\rfloor=16$.

   The following theorem provides a sufficient condition for the tensor product of extreme points to be extreme. This is the main result of this section. We will use this result in the next section to obtain various classes of extremal marginal states that attain the upper bound given by K. R. Parthasarathy.

   \begin{thm}\label{thm-tensor product-extreme}
   	Let $\Phi\in\msc{CP}[\M{d},\M{d}; A_1, A_2]\subseteq \msc{CP}[\M{d},\M{d}; A_2]$, which has a minimal Kraus decomposition consisting of Hermitian Kraus operators. Let $\Psi\in\msc{CP}[\M{d_1},\M{d_2}; B_1, B_2]$. If $\Phi$ and $\Psi$ are both extreme points of the sets $\msc{CP}[\M{d},\M{d}; A_2]$ and $\msc{CP}[\M{d_1},\M{d_2}; B_1, B_2]$, respectively, then the tensor product $\Phi\otimes \Psi$ is also an extreme point of the set $\msc{CP}[\M{d}\otimes\M{d_1},\M{d}\otimes\M{d_2}; A_1\otimes B_1, A_2\otimes B_2]$.
   \end{thm}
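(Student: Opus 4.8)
The plan is to pass everything to Kraus operators and apply the extremality criteria of Remark~\ref{remark} and Theorem~\ref{thm-extr-condition-for doubly}, using the Hermiticity of the Kraus operators of $\Phi$ to convert the one-sided linear independence coming from $\Phi$ into the two-sided input that the doubly-constrained criterion requires. Concretely, I would fix a minimal Kraus decomposition $\Phi=\sum_{j=1}^{n}\mathrm{Ad}_{V_j}$ with every $V_j\in\M{d}$ Hermitian (such a decomposition exists by hypothesis). Since $\Phi$ is an extreme point of $\msc{CP}[\M{d},\M{d};A_2]$ and this decomposition is minimal, Remark~\ref{remark} gives that $\{V_i^{*}V_j\}_{i,j=1}^{n}$ is linearly independent in $\M{d}$. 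Hermiticity is used here in an essential way: because $V_i^{*}V_j=V_iV_j$ and $V_jV_i^{*}=V_jV_i$, the indexed family $\{V_jV_i^{*}\}_{i,j=1}^{n}$ is obtained from $\{V_i^{*}V_j\}_{i,j=1}^{n}$ merely by the reindexing $(i,j)\mapsto(j,i)$, hence it too is linearly independent. Proposition~\ref{prop-LI-functionals} then yields linear functionals $\{f_{ab}\}_{a,b=1}^{n}$ on $\M{d}$ with $f_{ab}(V_c^{*}V_d)=\delta_{ac}\delta_{bd}$. On the other side, since $\Psi$ is extreme in $\msc{CP}[\M{d_1},\M{d_2};B_1,B_2]$, Theorem~\ref{thm-extr-condition-for doubly} provides a Kraus decomposition $\Psi=\sum_{l=1}^{m}\mathrm{Ad}_{W_l}$ with $W_l\in\M{d_1\times d_2}$ for which the pair $\{W_k^{*}W_l\}_{k,l=1}^{m}\subseteq\M{d_2}$ and $\{W_lW_k^{*}\}_{k,l=1}^{m}\subseteq\M{d_1}$ is bi-linearly independent.

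Next, as in the proof of Lemma~\ref{lem-product-Choirank}, $\Phi\otimes\Psi=\sum_{j,l}\mathrm{Ad}_{V_j\otimes W_l}$ is CP; combining this with $\Phi(I_d)=A_2$, $\Phi^{*}(I_d)=A_1$, $\Psi(I_{d_1})=B_2$, $\Psi^{*}(I_{d_2})=B_1$ and the identity $(\Phi\otimes\Psi)^{*}=\Phi^{*}\otimes\Psi^{*}$ (checked on simple tensors) shows that $\Phi\otimes\Psi\in\msc{CP}[\M{d}\otimes\M{d_1},\M{d}\otimes\M{d_2};A_1\otimes B_1,A_2\otimes B_2]$. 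The heart of the proof is to verify, for this Kraus decomposition, the bi-linear independence condition of Theorem~\ref{thm-extr-condition-for doubly}. Since $(V_i\otimes W_k)^{*}(V_j\otimes W_l)=(V_i^{*}V_j)\otimes(W_k^{*}W_l)$ and $(V_j\otimes W_l)(V_i\otimes W_k)^{*}=(V_jV_i^{*})\otimes(W_lW_k^{*})$, it is enough to prove: whenever scalars $\lambda_{ijkl}$ satisfy both $\sum_{i,j,k,l}\lambda_{ijkl}(V_i^{*}V_j)\otimes(W_k^{*}W_l)=0$ and $\sum_{i,j,k,l}\lambda_{ijkl}(V_jV_i^{*})\otimes(W_lW_k^{*})=0$, then $\lambda_{ijkl}=0$ for all indices. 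I would fix $(i_0,j_0)$ and apply $f_{i_0j_0}\otimes\id_{d_2}$ to the first relation: this annihilates every term except those with $(i,j)=(i_0,j_0)$ and leaves $\sum_{k,l}\lambda_{i_0j_0kl}\,W_k^{*}W_l=0$. Applying $f_{j_0i_0}\otimes\id_{d_1}$ to the second relation and using $V_jV_i^{*}=V_j^{*}V_i$ leaves, similarly, $\sum_{k,l}\lambda_{i_0j_0kl}\,W_lW_k^{*}=0$. Bi-linear independence of the $W$-families then forces $\lambda_{i_0j_0kl}=0$ for all $k,l$, and since $(i_0,j_0)$ is arbitrary, all $\lambda_{ijkl}$ vanish. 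Theorem~\ref{thm-extr-condition-for doubly} then yields that $\Phi\otimes\Psi$ is an extreme point of $\msc{CP}[\M{d}\otimes\M{d_1},\M{d}\otimes\M{d_2};A_1\otimes B_1,A_2\otimes B_2]$.

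The step I expect to be the main obstacle is exactly the passage to $\sum_{k,l}\lambda_{i_0j_0kl}W_lW_k^{*}=0$, and the hypotheses are tailored to make it work. If $\Phi$ were only assumed to be extreme in the doubly-constrained set $\msc{CP}[\M{d},\M{d};A_1,A_2]$, one would merely have bi-linear independence of the families $\{V_i^{*}V_j\}$ and $\{V_jV_i^{*}\}$, which is not enough to produce the separating functionals $f_{ab}$; and without Hermiticity the families $\{V_i^{*}V_j\}$ and $\{V_jV_i^{*}\}$ need not be simultaneously linearly independent (they impose genuinely different conditions in general), so the same functionals could not be reused on both relations. It also remains to keep the bookkeeping straight: minimality of the chosen decomposition of $\Phi$ is precisely what licenses the use of Remark~\ref{remark}, while no minimality is needed on the $\Psi$ side or for $\Phi\otimes\Psi$, since Theorem~\ref{thm-extr-condition-for doubly} only asks for the existence of some Kraus decomposition with the bi-linear independence property.
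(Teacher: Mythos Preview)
Your proposal is correct and follows essentially the same approach as the paper's proof: both fix a minimal Hermitian Kraus decomposition of $\Phi$, use Remark~\ref{remark} and Proposition~\ref{prop-LI-functionals} to obtain separating functionals $f_{ab}$ on $\M{d}$, take a Kraus decomposition of $\Psi$ witnessing the bi-linear independence of Theorem~\ref{thm-extr-condition-for doubly}, and then apply $f_{i_0j_0}\otimes\id$ and $f_{j_0i_0}\otimes\id$ to the two tensor relations to isolate the $W$-terms. Your additional remarks on why Hermiticity and extremality in the larger set $\msc{CP}[\M{d},\M{d};A_2]$ are essential are accurate and go slightly beyond what the paper spells out.
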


   \begin{proof} 
   	Let $\{V_j\}_{j=1}^{r_1}\subseteq\M{d}$ be a Hermitian family that forms a minimal Kraus decomposition of $\Phi$. That is,  $\Phi=\sum_{j=1}^{r_1}\mathrm{Ad}_{V_j}$ with $V_j=V_j^*$ for all $1\leq j \leq r_1$ and $CR(\Phi)=r_1$. Since $\Psi\in\msc{CP}[\M{d_1},\M{d_2}; B_1,B_2]$ is any extreme point of $\msc{CP}[\M{d_1},\M{d_2};B_1, B_2]$, by Theorem \ref{thm-extr-condition-for doubly}, there exists $\{W_l\}_{l=1}^{r_2}\subseteq\M{d_1\times d_2}$ such that $\Psi=\sum_{l=1}^{r_2}\mathrm{Ad}_{W_l}$, and the pair of sets $\{W_k^*W_l\}_{k,l=1}^{r_2}$ and $\{W_lW_k^*\}_{k,l=1}^{r_2}$ is bi-linearly independent. Since $\Phi\in\msc{CP}[\M{d},\M{d}; A_1,A_2]$ and $\Psi\in\msc{CP}[\M{d_1},\M{d_2}; B_1,B_2]$, by Lemma \ref{lem-product-Choirank}, we have $\Phi\otimes\Psi\in\msc{CP}[\M{d}\otimes\M{d_1},\M{d}\otimes\M{d_2}; A_1\otimes B_1,A_2\otimes B_2]$ and $\Phi\otimes\Psi=\sum_{j,l=1}^{r_1,r_2}\mathrm{Ad}_{V_j\otimes W_l}$. To show that $\Phi\otimes\Psi$ is an extreme point of the set $\msc{CP}[\M{d}\otimes\M{d_1},\M{d}\otimes\M{d_2}; A_1\otimes B_1, A_2\otimes B_2]$, it is enough to prove that the pair of sets $\{(V_i\otimes W_k)^*(V_j\otimes W_l):1\leq i,j\leq r_1, 1\leq k,l\leq r_2\}$ and $\{(V_j\otimes W_l)(V_i\otimes W_k)^*:1\leq i,j\leq r_1, 1\leq k,l\leq r_2\}$ is bi-linearly independent. Let $a_{ijkl}\in\mbb{C}$ such that
   	
   	\begin{align}
   		0&=\sum_{i,j,k,l}a_{ijkl}(V_i\otimes W_k)^*(V_j\otimes W_l)=\sum_{i,j,k,l}a_{ijkl}(V_iV_j\otimes W_k^*W_l) \label{eq-bi-Li-1},\\
   		0&=\sum_{i,j,k,l}a_{ijkl}(V_j\otimes W_l)(V_i\otimes W_k)^*=\sum_{i,j,k,l}a_{ijkl}(V_jV_i\otimes W_lW_k^*)\label{eq-bi-Li-2}.
   	\end{align}
   	
   	Since $\Phi$ is an extreme point of $\msc{CP}[\M{d},\M{d};A_2]$, by Remark \ref{remark}, the set $\{V_i^*V_j\}_{i,j=1}^{r_1}=\{V_iV_j\}_{i,j=1}^{r_1}$ is linearly independent. By Lemma \ref{prop-LI-functionals}, there exists a family of functionals $\{f_{ij}\}_{i,j=1}^{r_1}$ on $\M{d}$ such that $f_{ij}(V_mV_n)=\delta_{im}\delta_{jn}$ for all $1\leq i,j,m,n\leq r_1$. Fix $1\leq i_0,j_0\leq r_1$. Applying $f_{i_0j_0}\otimes \id$ and $f_{j_0i_0}\otimes\id$ to Equations \ref{eq-bi-Li-1} and \ref{eq-bi-Li-2} respectively, we get
   	
   	\begin{align*}
   		\sum_{k,l}a_{i_0j_0kl}W_k^*W_l=0,\\
   		\sum_{k,l}a_{i_0j_0kl}W_lW_k^*=0.
   	\end{align*}
   	
   	As the pair of the sets $\{W_k^*W_l\}_{k,l=1}^{r_2}$ and $\{W_lW_k^*\}_{k,l=1}^{r_2}$ is bi-linearly independent, we get $a_{i_0j_0kl}=0$ for all $1\leq k,l\leq r_2$. Since $i_0,j_0$ are arbitrary, we conclude that $a_{ijkl}=0$ for all $1\leq i,j\leq r_1$ and $1\leq k,l\leq r_2$. This completes the proof.
   \end{proof}

   \begin{rmk}
   	Let $\Phi\in\msc{CP}(\M{d},\M{d}; I_d, I_d)$. If $\Phi=\sum_{j=1}^{n}\mathrm{Ad}_{V_j}$ is an extreme point of $\msc{CP}(\M{d},\M{d}; I_d)$, then $\Phi^*=\sum_{j=1}^{n}\mathrm{Ad}_{V_j^*}$ is not necessarily an extreme point of $\msc{CP}(\M{d},\M{d}; I_d)$. That is, if $\{V_i^*V_j\}_{i,j=1}^n$ is linearly independent with $\sum_{i,j=1}^nV_i^*V_j=\sum_{i,j}^nV_jV_i^*=I_d$, then $\{V_jV_i^*\}_{i,j=1}^n$ is not necessarily linearly independent. For example, take $d=4$ and 
    \begin{align*}
        V_j=\frac{3}{4\sqrt{11}}{S^j}^*\begin{bmatrix}
			-\frac{13}{3} & 0\\
			0 & W\\
		\end{bmatrix}S^j
    \end{align*}
    for $1\leq j\leq 4$, where
    \begin{align*}
        &S=\sum_{k=1}^3\ranko{e_k}{e_{k+1}}+\ranko{e_4}{e_1},\\
        &W=\frac{1}{21}\begin{bmatrix}
			8 & -11 & 16\\
			-19 & -8 & 4\\
            -4 & 16 & 13\\
		\end{bmatrix}.
    \end{align*}
    Check that $\{V_i^*V_j\}_{i,j=1}^4$ is linearly independent with $\sum_{i,j=1}^4V_i^*V_j=\sum_{i,j}^4V_jV_i^*=I_4$ but $\{V_jV_i^*\}_{i,j=1}^4$ is linearly dependent. The reader may refer to Appendix B of \cite{HMR} for more details.
   \end{rmk}

\section{Sharpness of the upper bound for the case $d_1=d_2$}\label{sharpness for d_1=d_2}

Given $d\geq 3$, Ohno \cite{Ohno10} proved that there exists an extreme point in the set $\msc{CP}(\M{d},\M{d}; I_{d}, I_{d})$ with Choi-rank $d$. Furthermore, he established that the upper bound on the rank given by K. R. Parthasarathy is indeed sharp for $d_1 = d_2 = 3$ and $d_1 = d_2 = 4$. That is, there exist extreme points of the sets $\msc{CP}(\M{3},\M{3}; I_{3}, I_{3})$ and $\msc{CP}(\M{4},\M{4}; I_{4}, I_{4})$ with Choi-ranks $4$ and $5$ respectively. In this section, we will examine the sharpness of the upper bound given by K. R. Parthasarathy for the cases when $d_1=d_2\geq 5$.

\begin{center}
	 \item\subsection{$d_1=d_2=5$}
\end{center}

In this subsection, we provide examples of extreme points belonging to the set $\msc{CP}(\M{5},\M{5}; I_{5}, I_{5})$, which have Choi-ranks $6$ and $7$. Notably, as $\lfloor\sqrt{5^2+5^2-1}\rfloor=7$, we can affirm that the upper bound suggested by K. R. Parthasarathy is indeed sharp for the case $d_1=d_2=5$.

\begin{lem}\label{lem-CR-6}
	Let $\{W_j:1\leq j\leq 6\}\subseteq\M{5}$ be the set of matrices given by 
	\small
	\begin{align}\label{eq-CR-6}
		\begin{cases}
			W_1&=E_{13}+E_{32}=[0,e_3,e_1,0,0],\\
			W_2&=E_{24}+E_{43}=[0,0,e_4,e_2,0],\\
			W_3&=\sqrt{2}E_{35}+E_{54}=[0,0,0,e_5,\sqrt{2}e_3],\\
			W_4&=E_{14}+E_{42}=[0,e_4,0,e_1,0],\\
			W_5&=E_{15}+E_{41}+E_{53}=[e_4,0,e_5,0,e_1],\\
			W_6&=\sqrt{2}E_{21}+E_{52}=[\sqrt{2}e_2,e_5,0,0,0].
		\end{cases}
	\end{align}
	\normalsize
	Then the pair of sets $\{W_i^*W_j:1\leq i,j\leq 6\}$ and $\{W_jW_i^*:1\leq i,j\leq 6\}$ is bi-linearly independent.
\end{lem}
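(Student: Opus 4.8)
The plan is to verify bi-linear independence straight from the definition. Suppose scalars $\{\lambda_{ij}\}_{i,j=1}^{6}\subseteq\mbb{C}$ satisfy
\begin{align*}
	\sum_{i,j=1}^{6}\lambda_{ij}W_i^*W_j=0 \qquad\text{and}\qquad \sum_{i,j=1}^{6}\lambda_{ij}W_jW_i^*=0,
\end{align*}
and the aim is to conclude $\lambda_{ij}=0$ for all $i,j$. The structural feature that makes this tractable is that each $W_j$ in \eqref{eq-CR-6} is a sum of scaled matrix units $c\,E_{rc}$ whose row indices are pairwise distinct and whose column indices are pairwise distinct. Hence $W_i^*W_j$ is supported only on those $E_{kl}$ for which $W_i$ has an entry in some row $r$ and column $k$ while $W_j$ has an entry in the same row $r$ and column $l$; symmetrically, $W_jW_i^*$ is supported only on those $E_{kl}$ for which $W_j$ has an entry in some column $c$ and row $k$ while $W_i$ has an entry in the same column $c$ and row $l$. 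Each such product is therefore a sum of one or two scaled matrix units, and from \eqref{eq-CR-6} one tabulates, for every position $(k,l)$, exactly which ordered pairs $(i,j)$ feed the $(k,l)$-entry of the two sums, and with which coefficients (products of $1$'s and $\sqrt{2}$'s).

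First I would exploit the identity $\sum\lambda_{ij}W_i^*W_j=0$. Comparing off-diagonal entries, most positions $(k,l)$ receive a contribution from a single pair $(i,j)$, which immediately forces $\lambda_{ij}=0$; a few positions are fed by two distinct products, yielding relations of the form $\lambda_{ij}+\lambda_{kl}=0$. Then I would use the identity $\sum\lambda_{ij}W_jW_i^*=0$ the same way; its off-diagonal entries annihilate all the remaining $\lambda_{ij}$ with $i\ne j$ (and make the relations above trivial). A point worth flagging here is that no two distinct $W_i,W_j$ are simultaneously row-disjoint and column-disjoint, so every off-diagonal coefficient $\lambda_{ij}$ is detected by at least one of the two identities; consequently, after this step, all $\lambda_{ij}$ with $i\ne j$ vanish.

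It remains to treat the diagonal coefficients $x_i:=\lambda_{ii}$. The five diagonal entries of $\sum\lambda_{ij}W_i^*W_j=0$ give a homogeneous linear system of five equations in the six unknowns $x_1,\dots,x_6$ whose solution space turns out to be one-dimensional; one checks it is spanned by the vector with $(x_1,x_2,x_3,x_4,x_5,x_6)=(1,1,1,-2,-2,1)$. Finally, a single diagonal entry of $\sum\lambda_{ij}W_jW_i^*=0$ — the $(2,2)$-entry, which reads $x_2+2x_6=0$ — is not satisfied by this spanning vector, so the free parameter must be zero, whence $\lambda_{ii}=0$ for all $i$. This establishes the asserted bi-linear independence.

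I expect the only real difficulty to be organizational rather than conceptual: the argument rests on correctly listing the thirty-six products $W_i^*W_j$ and the thirty-six products $W_jW_i^*$ as short combinations of matrix units, and then extracting the linear system without error. The pairwise-distinct-rows and pairwise-distinct-columns structure of the $W_j$ keeps every product to at most two terms, so once this bookkeeping table is assembled, the remaining linear algebra is routine.
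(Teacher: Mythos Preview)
Your proposal is correct and follows essentially the same direct-computation approach as the paper: compute the products $W_i^*W_j$ and $W_jW_i^*$ as short combinations of matrix units, use the off-diagonal entries of the two vanishing sums to kill all $\lambda_{ij}$ with $i\neq j$, and then handle the six diagonal unknowns by a small linear system. The only cosmetic difference is that the paper packages all ten diagonal equations into a single $10\times 6$ matrix and checks it has rank $6$, whereas you first reduce to a one-parameter family using the five equations from $\sum\lambda_{ij}W_i^*W_j=0$ and then eliminate the parameter with the $(2,2)$-entry of the second identity; both routes are equivalent bookkeeping.
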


\begin{proof}
	See the Lemma \ref{lem-CR-6-appendix} in Appendix.
\end{proof}

 \begin{thm}
	There exists an extreme point of the set $\msc{CP}(\M{5},\M{5}; I_{5}, I_{5})$ with Choi-rank $6$.
\end{thm}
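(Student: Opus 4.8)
The plan is to take the completely positive map whose (rescaled) Kraus operators are the matrices produced in Lemma~\ref{lem-CR-6} and to verify the hypotheses of Theorem~\ref{thm-extr-condition-for doubly}. Concretely, set $V_j := \frac{1}{\sqrt 3}W_j$ for $1\le j\le 6$, with $W_1,\dots,W_6\in\M5$ as in \eqref{eq-CR-6}, and define $\Phi:\M5\to\M5$ by $\Phi := \sum_{j=1}^{6}\mathrm{Ad}_{V_j}$, that is, $\Phi(X)=\frac{1}{3}\sum_{j=1}^{6}W_j^*XW_j$. By Corollary~\ref{cor-choi-kraus-rep}, $\Phi$ is completely positive.

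Next I would check that $\Phi\in\msc{CP}(\M5,\M5; I_5, I_5)$, namely $\Phi(I_5)=I_5$ and $\Phi^*(I_5)=I_5$. Since $\Phi(I_5)=\frac{1}{3}\sum_{j}W_j^*W_j$ and $\Phi^*(I_5)=\frac{1}{3}\sum_{j}W_jW_j^*$, this reduces to the two identities $\sum_{j=1}^{6}W_j^*W_j=3I_5$ and $\sum_{j=1}^{6}W_jW_j^*=3I_5$. Both are immediate from \eqref{eq-CR-6}: every $W_j$ has at most one nonzero entry in each row and in each column, so each $W_j^*W_j$ and each $W_jW_j^*$ is diagonal, and one simply reads off from the column norms (respectively the row norms) of $W_1,\dots,W_6$ that they sum to $3$ in every diagonal position.

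The remaining assertions are then formal. By Lemma~\ref{lem-CR-6} the pair of sets $\{W_i^*W_j\}_{i,j=1}^{6}$ and $\{W_jW_i^*\}_{i,j=1}^{6}$ is bi-linearly independent, and scaling every matrix by $\frac{1}{3}$ preserves this, so the pair $\{V_i^*V_j\}_{i,j=1}^{6}$ and $\{V_jV_i^*\}_{i,j=1}^{6}$ is bi-linearly independent as well. Hence $\Phi=\sum_{j=1}^{6}\mathrm{Ad}_{V_j}$ meets the criterion of Theorem~\ref{thm-extr-condition-for doubly}, so $\Phi$ is an extreme point of $\msc{CP}(\M5,\M5; I_5, I_5)$. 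Moreover, by the remark following Theorem~\ref{thm-extr-condition-for doubly}, bi-linear independence of that pair forces $\{V_j\}_{j=1}^{6}$ to be linearly independent, whence the Kraus decomposition $\Phi=\sum_{j=1}^{6}\mathrm{Ad}_{V_j}$ is minimal and $CR(\Phi)=6$. This produces the required extreme point.

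The main obstacle is Lemma~\ref{lem-CR-6} itself — the bi-linear independence of the two $36$-element families of matrices — which requires a genuine computation; it is carried out separately in the appendix (Lemma~\ref{lem-CR-6-appendix}). Everything in the present argument is bookkeeping with matrix units together with one application of the Choi-type extremality characterization of Theorem~\ref{thm-extr-condition-for doubly}.
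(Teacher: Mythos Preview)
Your proposal is correct and follows essentially the same approach as the paper: define $\Phi(X)=\tfrac{1}{3}\sum_{j=1}^{6}W_j^*XW_j$ with the $W_j$ of \eqref{eq-CR-6}, check $\Phi\in\msc{CP}(\M5,\M5;I_5,I_5)$, and then invoke Lemma~\ref{lem-CR-6} together with Theorem~\ref{thm-extr-condition-for doubly}. Your write-up is a bit more explicit than the paper's (in spelling out why the normalizations give $I_5$ and why $CR(\Phi)=6$), but the argument is the same.
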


 \begin{proof}
	Let $\Phi$ be the map defined on $\M{5}$ by
	\begin{align*}
		\Phi(X)
		&=\frac{1}{3}\sum_{j=1}^{6}W_j^*XW_j\\
		&=\frac{1}{3}\Matrix{2x_{22}+x_{44}&\sqrt{2}x_{25}&x_{45}&0&x_{41}\\
			\sqrt{2}x_{52}&x_{33}+x_{44}+x_{55}&x_{31}&x_{41}&0\\
			x_{54}&x_{13}&x_{11}+x_{44}+x_{55}&x_{42}&x_{51}\\
			0&x_{14}&x_{24}&x_{11}+x_{22}+x_{55}&\sqrt{2}x_{53}\\
			x_{14}&0&x_{15}&\sqrt{2}x_{35}&x_{11}+2x_{33}}
	\end{align*}
	for all $X=[x_{ij}]\in\M{5}$, where $W_j$'s are given by \eqref{eq-CR-6}. Clearly, $\Phi\in\msc{CP}(\M{5},\M{5}; I_{5}, I_{5})$. To verify that $\Phi$ is an extreme point of the set $\msc{CP}(\M{5},\M{5}; I_{5}, I_{5})$, it suffices to show that the pair of sets $\{W_i^*W_j:1\leq i,j\leq 6\}$ and $\{W_jW_i^*:1\leq i,j\leq 6\}$ is bi-linearly independent. This assertion indeed follows from the Lemma \ref{lem-CR-6}. Consequently, the map $\Phi$ is an extreme point with $CR(\Phi)=6$.
\end{proof}

 \begin{lem}\label{lem-CR-7}
	Let $\{W_j:1\leq j\leq 7\}\subseteq\M{5}$ be the set of matrices given by
	\begin{align}\label{eq-CR-7}
		\begin{cases}
			W_1&=\sqrt{2}E_{13}+E_{32}+E_{54}=[0,e_3,\sqrt{2}e_1,e_5,0],\\
			W_2&=E_{24}+E_{43}=[0,0,e_4,e_2,0],\\
			W_3&=\sqrt{2}E_{35}+\sqrt{3}E_{54}=[0,0,0,\sqrt{3}e_5,\sqrt{2}e_3],\\
			W_4&=E_{14}+E_{42}=[0,e_4,0,e_1,0]\\
			W_5&=E_{15}+2E_{41}+E_{53}=[2e_4,0,e_5,0,e_1],\\
			W_6&=\sqrt{2}E_{21}+E_{52}=[\sqrt{2}e_2,e_5,0,0,0],\\
			W_7&=\sqrt{2}E_{13}+\sqrt{3}iE_{32}+\sqrt{3}E_{25}=[0,\sqrt{3}ie_3,\sqrt{2}e_1,0,\sqrt{3}e_2].
		\end{cases}
	\end{align}
	Then the pair of sets $\{W_i^*W_j:1\leq i,j\leq 7\}$ and $\{W_jW_i^*: 1\leq i,j\leq 7\}$ is bi-linearly independent.
\end{lem}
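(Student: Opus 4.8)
The plan is to establish the bi-linear independence by a direct, two-stage elimination, exactly parallel to the proof of Lemma~\ref{lem-CR-6}. Suppose $a_{ij}\in\mbb{C}$ for $1\le i,j\le 7$ satisfy simultaneously
\[
\sum_{i,j=1}^{7}a_{ij}\,W_i^{*}W_j=0
\qquad\text{and}\qquad
\sum_{i,j=1}^{7}a_{ij}\,W_jW_i^{*}=0;
\]
the goal is to show $a_{ij}=0$ for all $i,j$. The key structural remark is that every nonzero column of each $W_k$ in \eqref{eq-CR-7} is a scalar multiple of a standard basis vector: writing $\mathrm{col}_p(W_k)=c_{k,p}\,e_{\sigma_k(p)}$, where $\sigma_k$ is the partial function recording the ``output row'' of column $p$ and $c_{k,p}$ the corresponding scalar, one computes
\[
(W_i^{*}W_j)_{p,q}=\overline{c_{i,p}}\,c_{j,q}\ \text{ if }\ \sigma_i(p)=\sigma_j(q),\qquad (W_jW_i^{*})_{p,q}=\sum_{s:\ \sigma_j(s)=p,\ \sigma_i(s)=q} c_{j,s}\,\overline{c_{i,s}},
\]
all other entries being $0$. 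Thus both families of products are supported on ``monomial'' patterns, and reading off each $(p,q)$-entry of the two displayed equations produces an explicit, finite list of linear relations among the $49$ unknowns $a_{ij}$.

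First I would tabulate, for the seven matrices in \eqref{eq-CR-7}, which ordered pairs $(i,j)$ contribute to each entry of $W_i^{*}W_j$ and of $W_jW_i^{*}$; since the supports of the $W_k$ are tiny, the overwhelming majority of pairs $(i,j)$ yield a matrix $W_i^{*}W_j$ possessing an entry hit by that pair alone, which forces the corresponding $a_{ij}$ to vanish at once. Deleting such pairs and iterating disposes of almost all of the coefficients using only the first equation. What remains is a small number of clusters of pairs $(i,j)$ whose matrices $W_i^{*}W_j$ have identical support --- for such a cluster the first equation gives only that a weighted sum of the $a_{ij}$ vanishes, and here one invokes the second equation $\sum a_{ij}W_jW_i^{*}=0$, whose support for the same pairs is different. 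The particular numerical choices in \eqref{eq-CR-7} --- the coefficient $2$ (rather than $1$) in $W_5$, the $\sqrt{3}$ (rather than a second $\sqrt{2}$) in $W_3$, and, most importantly, the unit $i$ multiplying $E_{32}$ in $W_7$ --- appear to be arranged precisely so that the $2\times 2$ (or $3\times 3$) coefficient matrices arising in these clusters are invertible; in particular the factor $i$ separates the contributions of $W_1$ and $W_7$, which share the $E_{13}$- and $E_{32}$-type columns and would otherwise be real-linearly entangled in the relations. Running the elimination to completion yields $a_{ij}=0$ for all $i,j$.

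The main difficulty is organizational rather than conceptual: one must keep track of the roughly $50$ scalar relations coming from the two matrix equations and present them in an order in which the $a_{ij}$ fall one cluster at a time. The genuinely delicate point, and the only place where an actual (small) computation cannot be avoided, is the handful of pairs $(i,j)$ and $(i',j')$ for which $W_i^{*}W_j$ and $W_{i'}^{*}W_{j'}$ are proportional --- these necessarily involve the overlapping columns of $W_1,W_2,W_4,W_5,W_7$ --- where one must exhibit, for each such cluster, a pair of entries (one drawn from each of the two matrix equations) whose $2\times 2$ coefficient determinant is nonzero. As in the case of Lemma~\ref{lem-CR-6}, the complete bookkeeping is best carried out in the appendix.
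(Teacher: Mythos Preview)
Your proposal is correct and follows essentially the same approach as the paper: both reduce the bi-linear independence to a finite system of scalar equations obtained by reading off the entries of $\sum a_{ij}W_i^{*}W_j=0$ and $\sum a_{ij}W_jW_i^{*}=0$, then eliminate. The paper's appendix proof simply carries out explicitly what you describe as a plan --- it lists all $49+49$ products, writes the two resulting $5\times 5$ matrix equations, observes that all off-diagonal $a_{ij}$ vanish except $a_{17},a_{71}$, and finishes by checking that the residual $10\times 9$ coefficient matrix for $(a_{11},\ldots,a_{77},a_{17},a_{71})$ has full rank; your ``clusters'' are precisely the pair $\{W_1,W_7\}$ sharing the $E_{13}$- and $E_{32}$-columns together with the diagonal terms, and your remark about the role of the factor $i$ is exactly what makes that residual system nonsingular.
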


\begin{proof}
	See the Lemma \ref{lem-CR-7-appendix} in Appendix.
\end{proof}

\begin{thm}\label{thm-CR-7}
	There exists an extreme point of the set $\msc{CP}(\M{5},\M{5}; I_{5}, I_{5})$ with Choi-rank $7$.
\end{thm}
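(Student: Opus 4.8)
The plan is to exhibit an explicit extreme point realizing Choi-rank $7$, using the seven operators $W_1,\dots,W_7\in\M{5}$ displayed in \eqref{eq-CR-7}, and then invoke Theorem \ref{thm-extr-condition-for doubly} to conclude extremality. Concretely, I would define $\Phi:\M{5}\to\M{5}$ by $\Phi(X)=c\sum_{j=1}^{7}W_j^*XW_j$ for a suitable normalizing constant $c>0$ (here $c=\tfrac{1}{6}$, matching the fact that $\sum_j W_j^*W_j=6I_5$, which one checks by direct computation on the given column descriptions $W_j=[\,\cdot\,]$). The first step is to verify that $\Phi\in\msc{CP}(\M{5},\M{5};I_5,I_5)$: complete positivity is automatic from the Kraus form (Corollary \ref{cor-choi-kraus-rep}), and one must check both $\Phi(I_5)=\sum_j W_j^*W_j=6I_5$ (times $c$) and $\Phi^*(I_5)=\sum_j W_jW_j^*=6I_5$ (times $c$); the latter is the unital/trace-preserving pair of constraints needed to land inside $\msc{CP}(\M{5},\M{5};I_5,I_5)$.

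The second and essential step is to show that the pair of sets $\{W_i^*W_j:1\le i,j\le 7\}\subseteq\M{5}$ and $\{W_jW_i^*:1\le i,j\le 7\}\subseteq\M{5}$ is bi-linearly independent. This is exactly the content of Lemma \ref{lem-CR-7} (proved as Lemma \ref{lem-CR-7-appendix} in the Appendix), so here I would simply cite it: once bi-linear independence is in hand, Theorem \ref{thm-extr-condition-for doubly} immediately gives that $\Phi$ is an extreme point of $\msc{CP}(\M{5},\M{5};I_5,I_5)$, and the attached Remark (that bi-linear independence of these two sets forces $\{W_j\}_{j=1}^{7}$ to be linearly independent) gives $CR(\Phi)=7$. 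Since $\lfloor\sqrt{5^2+5^2-1}\rfloor=7$, this $\Phi$ attains the K.\,R. Parthasarathy bound, which is what the theorem asserts.

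The main obstacle is the bi-linear independence verification itself. One cannot use the easy shortcut that one of the two families is linearly independent (that would not even require the paired structure), because $49$ matrices in the $25$-dimensional space $\M{5}$ cannot be linearly independent; the whole point of the $\sqrt{3}i$ and the coefficient $2$ in $W_5,W_7$ is to break the symmetry that would otherwise make the construction fail. So the argument must genuinely exploit the definition: suppose $\sum_{i,j}\lambda_{ij}W_i^*W_j=0$ and $\sum_{i,j}\lambda_{ij}W_jW_i^*=0$ simultaneously, and deduce $\lambda_{ij}=0$. The natural way to organize this is to compute the matrix units appearing in each $W_i^*W_j$ and each $W_jW_i^*$ (each $W_i^*W_j$ is a sum of a few $E_{pq}$ with explicit scalar coefficients, since the $W_j$ are sparse), group the two scalar equations entry-by-entry, and solve the resulting homogeneous linear system. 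Many entries $E_{pq}$ are hit by only one product, forcing the corresponding $\lambda_{ij}=0$ immediately; the surviving coefficients then get pinned down by a short chain of substitutions in which the asymmetry introduced by the factor $2$ (so that $W_5^*W_5$ and $W_5W_5^*$ carry different weights on the relevant matrix units) and by the complex phase $\sqrt{3}i$ prevents cancellations that would occur in a naive symmetric ansatz. I would defer the bookkeeping to the Appendix lemma and, in the proof of Theorem \ref{thm-CR-7} proper, present only the definition of $\Phi$, the membership check, the citation of Lemma \ref{lem-CR-7}, and the conclusion via Theorem \ref{thm-extr-condition-for doubly}.
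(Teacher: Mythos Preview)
Your proposal is correct and follows essentially the same approach as the paper: define $\Phi=\tfrac{1}{6}\sum_{j=1}^{7}\mathrm{Ad}_{W_j}$ with the $W_j$ from \eqref{eq-CR-7}, verify $\Phi\in\msc{CP}(\M{5},\M{5};I_5,I_5)$, cite Lemma \ref{lem-CR-7} for the bi-linear independence, and conclude extremality with $CR(\Phi)=7$ via Theorem \ref{thm-extr-condition-for doubly}. Your additional commentary on why the asymmetries in $W_5$ and $W_7$ are needed is accurate context but not part of the formal proof.
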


\begin{proof}
	Define the linear map $\Phi$ on $\M{5}$ by
	\small
	\begin{align*}
		\Phi(X)
		&=\frac{1}{6}\sum_{j=1}^{7}W_j^*XW_j\\
		&=\frac{1}{6}\Matrix{2x_{22}+4x_{44}&\sqrt{2}x_{25}&2x_{45}&0&2x_{41}\\
			\sqrt{2}x_{52}&4x_{33}+x_{44}+x_{55}&(\sqrt{2}-\sqrt{6}i)x_{31}&x_{41}+x_{35}&-3ix_{32}\\
			2x_{54}&(\sqrt{2}+\sqrt{6}i)x_{13}&4x_{11}+x_{44}+x_{55}&x_{42}+\sqrt{2}x_{15}&x_{51}+\sqrt{6}x_{12}\\
			0&x_{53}+x_{14}&x_{24}+\sqrt{2}x_{51}&x_{11}+x_{22}+4x_{55}&\sqrt{6}x_{53}\\
			2x_{14}&3ix_{23}&x_{15}+\sqrt{6}x_{21}&\sqrt{6}x_{35}&x_{11}+3x_{22}+2x_{33}}
	\end{align*}
	\normalsize
	for all $X=[x_{ij}]\in\M{5}$, where $W_j$'s are given by \eqref{eq-CR-7}. Clearly, $\Phi\in\msc{CP}(\M{5},\M{5}; I_{5}, I_{5})$. Now, from Lemma \ref{lem-CR-7}, it follows that the pair of the sets $\{W_i^*W_j:1\leq i,j\leq 7\}$ and $\{W_jW_i^*:1\leq i,j\leq 7\}$ is bi-linearly independent and hence from Theorem \ref{thm-extr-condition-for doubly}, we conclude that $\Phi$ is extreme point of the set $\msc{CP}(\M{5},\M{5}; I_{5}, I_{5})$ with $CR(\Phi)=7$.
\end{proof}

\begin{cor}
	$\msc{MR}(\frac{I_{5}}{5},\frac{I_{5}}{5})=7=\lfloor{\sqrt{5^2+5^2-1}}\rfloor$.
\end{cor}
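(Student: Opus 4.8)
The plan is to squeeze $\msc{MR}(\frac{I_5}{5},\frac{I_5}{5})$ between $7$ from above and $7$ from below. For the upper bound I would simply quote the estimate of K. R. Parthasarathy \cite{krp05} recalled in Section \ref{pre}, namely $\msc{MR}(A_1,A_2)\le\lfloor\sqrt{d_1^2+d_2^2-1}\rfloor$ whenever $A_j\in\M{d_j}$; taking $d_1=d_2=5$ this becomes $\msc{MR}(\frac{I_5}{5},\frac{I_5}{5})\le\lfloor\sqrt{5^2+5^2-1}\rfloor=\lfloor\sqrt{49}\rfloor=7$.

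For the lower bound I would use Theorem \ref{thm-CR-7}, which exhibits an extreme point $\Phi$ of $\msc{CP}(\M{5},\M{5};I_5,I_5)$ with $CR(\Phi)=7$, and then pass to the normalized map $\frac{1}{5}\Phi$. One checks $(\frac{1}{5}\Phi)(I_5)=\frac{I_5}{5}$ and $(\frac{1}{5}\Phi)^*(I_5)=\frac{I_5}{5}$, so $\frac{1}{5}\Phi\in\msc{CP}(\M{5},\M{5};\frac{I_5}{5},\frac{I_5}{5})$. Since $\Psi\mapsto\frac{1}{5}\Psi$ is an affine bijection from $\msc{CP}(\M{5},\M{5};I_5,I_5)$ onto $\msc{CP}(\M{5},\M{5};\frac{I_5}{5},\frac{I_5}{5})$ with affine inverse $\Psi\mapsto 5\Psi$, it carries extreme points to extreme points; and multiplying a CP map by a positive scalar only rescales its Kraus operators, hence leaves the Choi-rank unchanged. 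Therefore $\frac{1}{5}\Phi$ is an extreme point of $\msc{CP}(\M{5},\M{5};\frac{I_5}{5},\frac{I_5}{5})$ with $CR(\frac{1}{5}\Phi)=7$, giving $\msc{MR}(\frac{I_5}{5},\frac{I_5}{5})\ge 7$.

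Combining the two inequalities yields $\msc{MR}(\frac{I_5}{5},\frac{I_5}{5})=7=\lfloor\sqrt{5^2+5^2-1}\rfloor$. There is no genuine difficulty in this corollary: all the real content sits in Lemma \ref{lem-CR-7} and Theorem \ref{thm-CR-7} — in particular the bi-linear independence verification deferred to the Appendix — and the only point needing a moment's attention is the innocuous rescaling that converts the constraint $\Phi(I_5)=\Phi^*(I_5)=I_5$ into $\Phi(I_5)=\Phi^*(I_5)=\frac{I_5}{5}$.
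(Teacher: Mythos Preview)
Your argument is correct and matches the paper's own proof: both take $\Phi_1=\frac{1}{5}\Phi$ with $\Phi$ from Theorem \ref{thm-CR-7}, observe that this rescaling yields an extreme point of $\msc{CP}(\M{5},\M{5};\frac{I_5}{5},\frac{I_5}{5})$ with Choi-rank $7$, and combine with Parthasarathy's upper bound. Your write-up is slightly more explicit about why the affine bijection preserves extremality and Choi-rank, but the approach is the same.
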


\begin{proof}
	Take $\Phi_1=\frac{\Phi}{5}$, where $\Phi$ is the same as considered in the proof of the Theorem \ref{thm-CR-7}. Then $\Phi_1$ is an extreme point within the set $\msc{CP}(\M{5},\M{5}; \frac{I_{5}}{5},\frac{I_{5}}{5})$ with $CR(\Phi_1)=CR(\Phi)=7$. Hence we conclude that $\msc{MR}(\frac{I_{5}}{5},\frac{I_{5}}{5})=7$.
\end{proof}

\begin{center}
	\item \subsection{ $(d_1,d_2)=(9,9), (12,12), (5k,5k), 3\leq k\leq 14 $}
\end{center}

In this subsection, we study that the upper bound given by K. R. Parthasarathy is indeed sharp for $(d_1,d_2)=(9,9), (12,12), (5k,5k) \text{ where } 3\leq k\leq 14 $. That is, there exist extreme points of the sets $\msc{CP}(\M{9},\M{9}; I_{9}, I_{9}), \msc{CP}(\M{12},\M{12}; I_{12}, I_{12})$ and $\msc{CP}(\M{5k},\M{5k}; I_{5k}, I_{5k})$ with Choi-ranks $12, 16$ and $7k$ respectively, where $3\leq k\leq 14$. To construct these examples, we will utilize the examples given by Ohno \cite{Ohno10} and the example presented in the previous subsection.

\begin{thm}\label{thm-krpbound-M9}
	There exists an extreme point of the set  $\msc{CP}(\M{9},\M{9}; I_{9},I_{9})$ with Choi-rank $12$. 
\end{thm}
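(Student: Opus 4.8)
The plan is to obtain the desired extreme point as a tensor product, exploiting the factorization $9 = 3\cdot 3$ and the identification $\M{9} = \M{3}\otimes\M{3}$ (and $I_9 = I_3\otimes I_3$). First note that $\lfloor\sqrt{9^2+9^2-1}\rfloor=\lfloor\sqrt{161}\rfloor=12$, so Choi-rank $12$ is exactly the Parthasarathy bound in $\M{9}$; thus a rank-$12$ extreme point settles the Rudolph question for $d_1=d_2=9$. The two ingredients are the two known extreme points in dimension $3$: on the one hand, the map $\Phi\in\msc{CP}(\M{3},\M{3}; I_3, I_3)$ from Theorem \ref{thm-extrem-rank-lowerbound} (with $d=3$), which is an extreme point of the larger set $\msc{CP}(\M{3},\M{3}; I_3)$, has $CR(\Phi)=3$, and — crucially — admits a minimal Kraus decomposition by Hermitian operators; on the other hand, Ohno's map $\Psi\in\msc{CP}(\M{3},\M{3}; I_3, I_3)$ from Theorem \ref{thm-extreme-rank-4}, which is an extreme point of $\msc{CP}(\M{3},\M{3}; I_3, I_3)$ with $CR(\Psi)=4$.

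Next I would invoke Theorem \ref{thm-tensor product-extreme} with $d=3$, $d_1=d_2=3$, $A_1=A_2=I_3$, $B_1=B_2=I_3$. Its hypotheses are met: $\Phi\in\msc{CP}[\M{3},\M{3}; I_3,I_3]\subseteq\msc{CP}[\M{3},\M{3}; I_3]$ has a minimal Hermitian Kraus decomposition and is extreme in $\msc{CP}[\M{3},\M{3}; I_3]$, while $\Psi$ is extreme in $\msc{CP}[\M{3},\M{3}; I_3, I_3]$. The theorem then yields that $\Phi\otimes\Psi$ is an extreme point of $\msc{CP}[\M{3}\otimes\M{3},\M{3}\otimes\M{3}; I_3\otimes I_3, I_3\otimes I_3]=\msc{CP}(\M{9},\M{9}; I_9, I_9)$. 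Finally, Lemma \ref{lem-product-Choirank} gives $CR(\Phi\otimes\Psi)=CR(\Phi)\,CR(\Psi)=3\cdot 4=12$, completing the argument. (If desired, one can also write out $\Phi\otimes\Psi$ explicitly in terms of the Kraus operators $V_i\otimes W_j$, $1\le i\le 3$, $1\le j\le 4$, but this is not needed for the statement.)

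The only real subtlety — and the point that must be handled correctly rather than an actual obstacle — is the \emph{asymmetry} in Theorem \ref{thm-tensor product-extreme}: one factor must be extreme in the singly-constrained set $\msc{CP}[\M{d},\M{d}; A_2]$ \emph{and} carry a Hermitian minimal Kraus family, while the other factor need only be extreme in the doubly-constrained set. One cannot simply tensor two "doubly extreme" maps and hope for extremality; indeed the excerpt already shows that tensoring two doubly extreme points (ranks $4$ and $5$) produces a rank-$20$ map in $\M{12}$ that fails to be extreme because $20>16=\lfloor\sqrt{12^2+12^2-1}\rfloor$. Here the arithmetic works out favourably: using the rank-$3$ Hermitian example as the "$\Phi$-factor" and the rank-$4$ Ohno example as the "$\Psi$-factor" gives Choi-rank $3\cdot 4 = 12$, which is precisely $\lfloor\sqrt{9^2+9^2-1}\rfloor$, so the resulting extreme point is rank-optimal.
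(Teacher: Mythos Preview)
Your proposal is correct and follows essentially the same approach as the paper's own proof: both take the tensor product of the Hermitian-Kraus rank-$3$ map from Theorem~\ref{thm-extrem-rank-lowerbound} (with $d=3$) and Ohno's rank-$4$ map from Theorem~\ref{thm-extreme-rank-4}, then apply Theorem~\ref{thm-tensor product-extreme} together with Lemma~\ref{lem-product-Choirank}. Your discussion of the asymmetry in Theorem~\ref{thm-tensor product-extreme} is apt and matches the spirit of the paper's remark preceding that theorem.
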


\begin{proof}
	Let $\Phi_1$ and $\Phi_2$ be two CP maps on $\M{3}$, defined by the Equations \ref{eq-extreme-rank-lowerbound-2} and \ref{eq-extreme-rank-4-2}, respectively. That is,
	\begin{align*}
		\Phi_1(X) &= \sum_{i=1}^3 V_i^* X V_i = \frac{1}{2}\begin{bmatrix} 
			x_{22} + x_{33} & x_{21} & x_{31} \\
			x_{12} & x_{11} + x_{22} & x_{23} \\
			x_{13} & x_{32} & x_{11} + x_{33}
		\end{bmatrix}, \\
		\Phi_2(X) &= \frac{1}{4}\sum_{j=1}^4 W_j^* X W_j = \frac{1}{4}\begin{bmatrix}
			x_{11} + 2x_{22} + x_{33} & \sqrt{6}x_{23} & \sqrt{2}x_{31} \\
			\sqrt{6}x_{32} & 3x_{33} + x_{11} & \sqrt{2}x_{12} \\
			\sqrt{2}x_{13} & \sqrt{2}x_{21} & 2x_{22} + 2x_{11}
		\end{bmatrix},
	\end{align*}
	where the matrices $V_i$ and $W_j$ are given by \eqref{eq-extreme-rank-lowerbound} and \eqref{eq-extreme-rank-4} respectively, for  $1\leq i\leq 3$ and $1\leq j\leq 4$. Clearly, $\Phi_1,\Phi_2\in\msc{CP}(\M{3},\M{3}; I_{3},I_{3})$. By Theorem \ref{thm-extrem-rank-lowerbound}, $\Phi_1$ is an extreme point of the set $\msc{CP}(\M{3},\M{3}; I_{3})$ with $CR(\Phi_1)=3$, which has a minimal Kraus decomposition consisting of Hermitian Kraus operators $\{V_i\}_{i=1}^3$. Also, by Theorem \ref{thm-extreme-rank-4}, $\Phi_2$ is an extreme point of the set $\msc{CP}(\M{3},\M{3}; I_{3}, I_{3})$ with $CR(\Phi_2)=4$. Take $\Phi = \Phi_1 \otimes \Phi_2$. By Lemma \ref{lem-product-Choirank}, it follows that $\Phi\in\msc{CP}(\M{9},\M{9}; I_{9}, I_{9})$ with $CR(\Phi)=12$. Finally, by Theorem \ref{thm-tensor product-extreme}, we conclude that $\Phi$ is an extreme point of the set $\msc{CP}(\M{9},\M{9}; I_{9}, I_{9})$ with $CR(\Phi)=12$.
\end{proof}

\begin{cor}
	$\msc{MR}(\frac{I_{9}}{9},\frac{I_{9}}{9})= 12=\lfloor{\sqrt{9^2+9^2-1}}\rfloor$.
\end{cor}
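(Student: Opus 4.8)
The plan is to deduce this corollary directly from Theorem \ref{thm-krpbound-M9} together with the general upper bound of K. R. Parthasarathy. First I would recall that, by the discussion following Proposition \ref{prop-C-J-extreme to extreme}, studying extreme points of $\msc{C}(\rho_1,\rho_2)$ is equivalent to studying extreme points of $\msc{CP}[\M{d_1},\M{d_2}; \rho_1^{\T},\rho_2]$, and in particular $\msc{MR}(\frac{I_9}{9},\frac{I_9}{9})$ is exactly the supremum of $CR(\Phi)$ over extreme points $\Phi$ of $\msc{CP}(\M{9},\M{9}; \frac{I_9}{9},\frac{I_9}{9})$.

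The lower bound $\msc{MR}(\frac{I_9}{9},\frac{I_9}{9})\geq 12$ comes from rescaling the map $\Phi=\Phi_1\otimes\Phi_2$ produced in Theorem \ref{thm-krpbound-M9}: setting $\Phi'=\frac{1}{9}\Phi$ gives an element of $\msc{CP}(\M{9},\M{9}; \frac{I_9}{9},\frac{I_9}{9})$, and scaling by a positive constant neither changes the Kraus span nor affects extremality (the defining bi-linear independence condition of Theorem \ref{thm-extr-condition-for doubly} is insensitive to a common scalar), so $\Phi'$ is an extreme point with $CR(\Phi')=CR(\Phi)=12$. This mirrors exactly the argument in the corollary following Theorem \ref{thm-CR-7}. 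For the upper bound, I would invoke the Parthasarathy bound $\msc{MR}(A_1,A_2)\leq\lfloor\sqrt{d_1^2+d_2^2-1}\rfloor$ recorded in the text after Theorem \ref{thm-extr-condition-for doubly}, which with $d_1=d_2=9$ gives $\msc{MR}(\frac{I_9}{9},\frac{I_9}{9})\leq\lfloor\sqrt{81+81-1}\rfloor=\lfloor\sqrt{161}\rfloor=12$. Combining the two inequalities yields $\msc{MR}(\frac{I_9}{9},\frac{I_9}{9})=12=\lfloor\sqrt{9^2+9^2-1}\rfloor$.

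There is essentially no obstacle here; the corollary is a bookkeeping consequence of Theorem \ref{thm-krpbound-M9} and the known upper bound. The only point requiring a line of justification is that multiplying an extreme point of $\msc{CP}(\M{9},\M{9}; I_9, I_9)$ by $\frac{1}{9}$ produces an extreme point of $\msc{CP}(\M{9},\M{9}; \frac{I_9}{9}, \frac{I_9}{9})$ with the same Choi-rank, which follows since $\Phi\mapsto\frac{1}{9}\Phi$ is an affine bijection between the two (compact convex) sets preserving Choi-rank.
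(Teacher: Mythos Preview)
Your proposal is correct and follows essentially the same approach as the paper: rescale the map $\Phi$ from Theorem \ref{thm-krpbound-M9} by $\tfrac{1}{9}$ to obtain an extreme point of $\msc{CP}(\M{9},\M{9}; \tfrac{I_9}{9},\tfrac{I_9}{9})$ with Choi-rank $12$, and combine with the Parthasarathy upper bound. If anything, you are slightly more explicit than the paper in justifying why the rescaling preserves extremality and Choi-rank.
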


\begin{proof}
	Take $\Phi_1=\frac{\Phi}{9}$, where $\Phi$ is the same as considered in the proof of the Theorem \ref{thm-krpbound-M9}. Then $\Phi_1$ is an extreme point within the set $\msc{CP}(\M{9},\M{9}; \frac{I_{9}}{9},\frac{I_{9}}{9})$ with $CR(\Phi_1)=CR(\Phi)=12$. Hence we conclude that $\msc{MR}(\frac{I_{9}}{9},\frac{I_{9}}{9})=12$.
\end{proof}

\begin{thm}\label{thm-CR-16 in 12}
	There exists an extreme point of the set $\msc{CP}(\M{12},\M{12};I_{12},I_{12})$ with Choi-rank $16$.
\end{thm}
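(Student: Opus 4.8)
The plan is to mirror the strategy used for $\M{9}$ in Theorem \ref{thm-krpbound-M9}, now combining an extreme point in $\M{3}$ with Hermitian Kraus operators and an extreme point in $\M{4}$. Concretely, first I would take $\Phi_1 \in \msc{CP}(\M{3},\M{3}; I_3, I_3)$ to be the map from Theorem \ref{thm-extrem-rank-lowerbound} with $d=3$, which is an extreme point of $\msc{CP}(\M{3},\M{3}; I_3)$ with $CR(\Phi_1)=3$ and, crucially, admits a minimal Kraus decomposition by Hermitian operators $\{V_i\}_{i=1}^3$. Then I would take $\Phi_2 \in \msc{CP}(\M{4},\M{4}; I_4, I_4)$ to be the map from Theorem \ref{thm-extreme-rank-5}, which is an extreme point of $\msc{CP}(\M{4},\M{4}; I_4, I_4)$ with $CR(\Phi_2)=4$.

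Next I would set $\Phi = \Phi_1 \otimes \Phi_2$. Since $\Phi_1(I_3)=I_3$, $\Phi_1^*(I_3)=I_3$, $\Phi_2(I_4)=I_4$, $\Phi_2^*(I_4)=I_4$, and by Lemma \ref{lem-product-Choirank} $\Phi$ is CP with $CR(\Phi)=CR(\Phi_1)\,CR(\Phi_2)=3\cdot 4 = 12$, we have $\Phi \in \msc{CP}(\M{12},\M{12}; I_{12}, I_{12})$ — wait, this gives Choi-rank $12$, not $16$. So I would instead take $\Phi_1$ to be the extreme point of $\msc{CP}(\M{4},\M{4}; I_4, I_4)$ from Theorem \ref{thm-extreme-rank-5} with $CR=5$ — but that does not have Hermitian Kraus operators. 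Let me reconsider: the correct decomposition is $12 = 4 \times 3$ with the $\M{4}$ factor carrying Choi-rank $4$ and the $\M{3}$ factor Choi-rank $4$; but $\lfloor\sqrt{4^2+4^2-1}\rfloor = 5$ and $\lfloor\sqrt{3^2+3^2-1}\rfloor = 4$, and the product bound is $\lfloor\sqrt{12^2+12^2-1}\rfloor = 16$. To reach $16 = 4 \times 4$, I would take $\Phi_1 \in \msc{CP}(\M{3},\M{3}; I_3, I_3)$ the extreme point from Theorem \ref{thm-extreme-rank-4} with $CR(\Phi_1)=4$ — but again this lacks Hermitian Kraus operators, so Theorem \ref{thm-tensor product-extreme} does not directly apply. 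The viable route is: take $\Phi_1$ to be a rank-$4$ extreme point of $\msc{CP}(\M{4},\M{4}; I_4)$ with \emph{Hermitian} Kraus operators (analogous to Theorem \ref{thm-extrem-rank-lowerbound} at $d=4$, which gives $CR=4$ and Hermitian operators $\{V_j\}_{j=1}^4$), and take $\Phi_2$ to be the rank-$4$ extreme point of $\msc{CP}(\M{3},\M{3}; I_3, I_3)$ from Theorem \ref{thm-extreme-rank-4}. Then $\Phi = \Phi_1 \otimes \Phi_2$, and by Theorem \ref{thm-tensor product-extreme} (with the Hermitian factor $\Phi_1$ extreme in the singly-constrained set $\msc{CP}(\M{4},\M{4}; I_4)$ and $\Phi_2$ extreme in the doubly-constrained set) $\Phi$ is an extreme point of $\msc{CP}(\M{12},\M{12}; I_{12}, I_{12})$; and by Lemma \ref{lem-product-Choirank}, $CR(\Phi) = 4 \cdot 4 = 16$.

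The main obstacle is verifying that the $\M{4}$ factor from Theorem \ref{thm-extrem-rank-lowerbound} is extreme in the \emph{singly}-constrained set $\msc{CP}(\M{4},\M{4}; I_4)$ (not merely the doubly-constrained one), since Theorem \ref{thm-tensor product-extreme} requires exactly that hypothesis on the Hermitian factor; fortunately Theorem \ref{thm-extrem-rank-lowerbound} asserts precisely this, so the hypotheses of Theorem \ref{thm-tensor product-extreme} are met verbatim. The remaining steps are routine: confirm $\Phi_1, \Phi_2 \in \msc{CP}(\M{\cdot},\M{\cdot}; I, I)$ so that the tensor product lands in $\msc{CP}(\M{12},\M{12}; I_{12}, I_{12})$, invoke Lemma \ref{lem-product-Choirank} for the Choi-rank count $4 \cdot 4 = 16$, and invoke Theorem \ref{thm-tensor product-extreme} for extremality. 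Since $\lfloor\sqrt{12^2+12^2-1}\rfloor = 16$, this simultaneously shows $\msc{MR}(\tfrac{I_{12}}{12},\tfrac{I_{12}}{12}) = 16$, which I would record as an immediate corollary by rescaling $\Phi$ to $\Phi/12$.
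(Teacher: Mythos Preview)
Your proposal, after the exploratory detours, lands on exactly the paper's proof: take $\Phi_1$ from Theorem \ref{thm-extrem-rank-lowerbound} at $d=4$ (extreme in $\msc{CP}(\M{4},\M{4};I_4)$ with Hermitian Kraus operators and $CR=4$), take $\Phi_2$ from Theorem \ref{thm-extreme-rank-4} (extreme in $\msc{CP}(\M{3},\M{3};I_3,I_3)$ with $CR=4$), and apply Lemma \ref{lem-product-Choirank} and Theorem \ref{thm-tensor product-extreme} to $\Phi_1\otimes\Phi_2$. This is precisely what the paper does, and your identification of the key hypothesis check (extremality of the Hermitian factor in the \emph{singly}-constrained set) is spot on.
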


\begin{proof}
	Let $\Phi_1$ be a CP map on $\M{4}$ given by 
	\begin{align*}
		\Phi_1(X) &=\sum_{i=1}^4V_i^*XV_i= \frac{1}{3}\begin{bmatrix}
			x_{22}+x_{33}+x_{44} & x_{21} & x_{31} & x_{41}\\
			x_{12} & x_{11}+2x_{22} & 2x_{23} & 2x_{24}\\
			x_{13} & 2x_{32} & x_{11}+2x_{33} & 2x_{34}\\
			x_{14} & 2x_{42} & 2x_{43} & x_{11}+2x_{44}
		\end{bmatrix}
	\end{align*}
	for all $X=[x_{ij}]\in\M{4}$, where $V_i$'s are given by \eqref{eq-extreme-rank-lowerbound} for $1\leq i\leq 4$.
	
	Similarly, let $\Phi_2$ be the CP map on $\M{3}$ given by 
	\begin{align*}
		\Phi_2(X) &= \frac{1}{4}\sum_{j=1}^4 W_j^* X W_j = \frac{1}{4}\begin{bmatrix}
			x_{11} + 2x_{22} + x_{33} & \sqrt{6}x_{23} & \sqrt{2}x_{31} \\
			\sqrt{6}x_{32} & 3x_{33} + x_{11} & \sqrt{2}x_{12} \\
			\sqrt{2}x_{13} & \sqrt{2}x_{21} & 2x_{22} + 2x_{11}
		\end{bmatrix}
	\end{align*}
	for all $X=[x_{ij}]\in\M{3}$, where $W_j$'s are given by \eqref{eq-extreme-rank-4} for $1\leq j\leq 4$. Clearly, $\Phi_1\in\msc{CP}(\M{4},\M{4};I_{4},I_{4})$ and $\Phi_2\in\msc{CP}(\M{3},\M{3}; I_{3},I_{3})$. By Theorem \ref{thm-extrem-rank-lowerbound}, $\Phi_1$ is an extreme point of the set $\msc{CP}(\M{4},\M{4}; I_{4})$ with $CR(\Phi_1)=4$, which has a minimal Kraus decomposition consisting of Hermitian Kraus operators $\{V_i\}_{i=1}^3$. Also, by Theorem \ref{thm-extreme-rank-4}, $\Phi_2$ is an extreme point of the set $\msc{CP}(\M{3},\M{3}; I_{3}, I_{3})$ with $CR(\Phi_2)=4$. Take $\Phi = \Phi_1 \otimes \Phi_2$. By Lemma \ref{lem-product-Choirank}, it follows that $\Phi\in\msc{CP}(\M{12},\M{12}; I_{12}, I_{12})$ with $CR(\Phi)=16$. Finally, by Theorem \ref{thm-tensor product-extreme}, we conclude that $\Phi$ is an extreme point of the set $\msc{CP}(\M{12},\M{12}; I_{12}, I_{12})$ with $CR(\Phi)=16$.
\end{proof}

\begin{cor}
	$\msc{MR}(\frac{I_{12}}{12},\frac{I_{12}}{12})= 16=\lfloor{\sqrt{12^2+12^2-1}}\rfloor$.
\end{cor}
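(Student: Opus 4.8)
The plan is to combine Theorem \ref{thm-CR-16 in 12} with the K. R. Parthasarathy upper bound, exactly as in the proofs of the analogous corollaries for $\M{5}$ and $\M{9}$. First I would invoke Theorem \ref{thm-CR-16 in 12} to get a CP map $\Phi = \Phi_1\otimes\Phi_2\in\msc{CP}(\M{12},\M{12};I_{12},I_{12})$ which is an extreme point of that set with $CR(\Phi)=16$. Then I would rescale: put $\Phi_1 = \frac{1}{12}\Phi$. Since scaling by a positive constant is an affine bijection between $\msc{CP}[\M{12},\M{12};I_{12},I_{12}]$ and $\msc{CP}[\M{12},\M{12};\frac{I_{12}}{12},\frac{I_{12}}{12}]$ that preserves Kraus decompositions, $\Phi_1$ is an extreme point of $\msc{CP}(\M{12},\M{12};\frac{I_{12}}{12},\frac{I_{12}}{12})$ with $CR(\Phi_1)=CR(\Phi)=16$. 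Hence $\msc{MR}(\frac{I_{12}}{12},\frac{I_{12}}{12})\geq 16$.

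For the reverse inequality, I would recall from the discussion following Theorem \ref{thm-extr-condition-for doubly} that $\msc{MR}(A_1,A_2)\leq\lfloor\sqrt{d_1^2+d_2^2-1}\rfloor$ for all $A_j\in\M{d_j}$; with $d_1=d_2=12$ this gives $\msc{MR}(\frac{I_{12}}{12},\frac{I_{12}}{12})\leq\lfloor\sqrt{12^2+12^2-1}\rfloor=\lfloor\sqrt{287}\rfloor=16$. Combining the two bounds yields $\msc{MR}(\frac{I_{12}}{12},\frac{I_{12}}{12})=16=\lfloor\sqrt{12^2+12^2-1}\rfloor$, as claimed.

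There is essentially no obstacle here: the corollary is a bookkeeping consequence of Theorem \ref{thm-CR-16 in 12}. All the real work — verifying that $\Phi_1$ on $\M{4}$ and $\Phi_2$ on $\M{3}$ are extreme, that $\Phi_1$ has a Hermitian minimal Kraus decomposition so that Theorem \ref{thm-tensor product-extreme} applies, and that $CR(\Phi_1\otimes\Phi_2)=4\cdot 4=16$ via Lemma \ref{lem-product-Choirank} — is already done in the proof of Theorem \ref{thm-CR-16 in 12}. The only point worth stating carefully is that the numerical bound $\lfloor\sqrt{287}\rfloor$ is indeed $16$ (since $16^2=256\leq 287<289=17^2$), which matches the Choi-rank produced, so the bound is attained.
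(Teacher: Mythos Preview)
Your proposal is correct and follows exactly the paper's approach: the paper's proof reads simply ``Follows from Theorem \ref{thm-CR-16 in 12},'' and you have merely spelled out the routine rescaling and the invocation of the Parthasarathy bound that this entails. One tiny remark: you reuse the symbol $\Phi_1$ both for the $\M{4}$ factor inside the tensor product and for the rescaled map $\frac{1}{12}\Phi$, so pick a fresh name for the latter to avoid a notational clash.
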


\begin{proof}
	Follows from Theorem \ref{thm-CR-16 in 12}.
\end{proof}

\begin{thm}\label{thm-CR-7k in 5k}
	There exists an extreme point of the set $\msc{CP}(\M{5k},\M{5k};I_{5k},I_{5k})$ with Choi-rank $7k$, for all $k\geq 3$.
\end{thm}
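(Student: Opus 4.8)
The plan is to realize $\M{5k}$ as a tensor product $\M{5}\otimes\M{k}$ and apply Theorem \ref{thm-tensor product-extreme}, exactly as in the proofs of Theorems \ref{thm-krpbound-M9} and \ref{thm-CR-16 in 12}. For the first factor I would take the extreme point $\Phi_1\in\msc{CP}(\M{5},\M{5};I_5,I_5)$ with $CR(\Phi_1)=7$ constructed in Theorem \ref{thm-CR-7}; but this is not quite enough, because Theorem \ref{thm-tensor product-extreme} requires one of the two factors to be extreme \emph{in the larger set} $\msc{CP}[\M{d},\M{d};A_2]$ (not merely in $\msc{CP}[\M{d},\M{d};A_1,A_2]$) and to possess a minimal Kraus decomposition by Hermitian operators. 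So the role of the ``rigid'' Hermitian factor should instead be played by Ohno's map from Theorem \ref{thm-extrem-rank-lowerbound}: for $k\geq 3$, let $\Psi\in\msc{CP}(\M{k},\M{k};I_k,I_k)$ be the map $\sum_{j=1}^{k}\mathrm{Ad}_{V_j}$ with the $V_j$ as in \eqref{eq-extreme-rank-lowerbound}, which is an extreme point of $\msc{CP}(\M{k},\M{k};I_k)$ with $CR(\Psi)=k$ and whose Kraus operators are Hermitian.

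Concretely, I would set $\Phi=\Psi\otimes\Phi_1$ (taking $d=k$, $d_1=d_2=5$, $A_1=A_2=I_k$, $B_1=B_2=I_5$ in Theorem \ref{thm-tensor product-extreme}). First, by Lemma \ref{lem-product-Choirank}, $\Phi\in\msc{CP}(\M{5k},\M{5k};I_{5k},I_{5k})$ and $CR(\Phi)=CR(\Psi)\,CR(\Phi_1)=7k$. Next, $\Psi$ is extreme in $\msc{CP}(\M{k},\M{k};I_k)$ with a Hermitian minimal Kraus decomposition (Theorem \ref{thm-extrem-rank-lowerbound} and the following remark), and $\Phi_1$ is extreme in $\msc{CP}(\M{5},\M{5};I_5,I_5)$ (Theorem \ref{thm-CR-7}); hence Theorem \ref{thm-tensor product-extreme} applies and yields that $\Phi$ is an extreme point of $\msc{CP}(\M{5k},\M{5k};I_{5k},I_{5k})$ with Choi-rank $7k$. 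Note that $\M{5k}\cong\M{k}\otimes\M{5}$ and $I_{5k}\cong I_k\otimes I_5$ under the natural identification, so the statement transfers verbatim.

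The main subtlety — and the only real obstacle — is the hypothesis bookkeeping in Theorem \ref{thm-tensor product-extreme}: one must make sure the Hermitian-minimal-Kraus factor is the one that is extreme in the \emph{unrestricted} set $\msc{CP}[\M{d},\M{d};A_2]$. Ohno's map $\Psi$ satisfies exactly this (it is extreme even in $\msc{CP}(\M{k},\M{k};I_k)$), whereas the rank-$7$ map $\Phi_1$ on $\M{5}$ is only known to be extreme in the doubly-constrained set, so it must occupy the second slot; since Theorem \ref{thm-tensor product-extreme} imposes no Hermiticity or unrestricted-extremality condition on $\Psi$ in that slot, everything goes through. For completeness one could also record the corollary $\msc{MR}(\tfrac{I_{5k}}{5k},\tfrac{I_{5k}}{5k})=7k$ for $3\leq k\leq 14$, which holds because $7k=\lfloor\sqrt{(5k)^2+(5k)^2-1}\rfloor$ precisely in that range (for $k\leq 14$ one checks $49k^2\leq 50k^2-1<(7k+1)^2$, i.e. $k^2\geq 1$ and $14k+1>k^2$, the latter failing first at $k=15$), by rescaling $\Phi$ to $\tfrac{1}{5k}\Phi$ as in the earlier corollaries.
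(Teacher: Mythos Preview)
Your proposal is correct and is essentially identical to the paper's own proof: the paper also takes Ohno's Hermitian-Kraus map on $\M{k}$ (as the factor extreme in $\msc{CP}(\M{k},\M{k};I_k)$) tensored with the Choi-rank $7$ extreme point of $\msc{CP}(\M{5},\M{5};I_5,I_5)$ from Theorem~\ref{thm-CR-7}, and then invokes Lemma~\ref{lem-product-Choirank} and Theorem~\ref{thm-tensor product-extreme}. Your bookkeeping about which factor must satisfy the stronger hypothesis is exactly right, and the appended corollary computation for $3\leq k\leq 14$ matches the paper's as well.
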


\begin{proof}
	Given $k\geq 3$. Let $\Phi_1\in\msc{CP}(\M{k},\M{k};I_{k},I_{k})$ be an extreme point of the set $\msc{CP}(\M{k},\M{k}; I_{k})$ with $CR(\Phi_1)=k$, which has a minimal Kraus decomposition consisting of Hermitian Kraus operators. Suppose $\Phi_2\in\msc{CP}(\M{5},\M{5}; I_{5},I_{5})$ is an extreme point of the set $\msc{CP}(\M{5},\M{5}; I_{5}, I_{5})$ with $CR(\Phi_2)=7$. Such $\Phi_1$ and $\Phi_2$ exist because of Theorem \ref{thm-extrem-rank-lowerbound} and \ref{thm-CR-7} respectively. Take $\Phi = \Phi_1 \otimes \Phi_2$. By Lemma \ref{lem-product-Choirank}, it follows that $\Phi\in\msc{CP}(\M{5k},\M{5k}; I_{5k}, I_{5k})$ with $CR(\Phi)=7k$. Using Theorem \ref{thm-tensor product-extreme}, we conclude that $\Phi$ is an extreme point of the set $\msc{CP}(\M{5k},\M{5k}; I_{5k}, I_{5k})$ with $CR(\Phi)=7k$.
\end{proof}

Note that, if $k\leq 14$ then $(7k)^2\leq 2(5k)^2-1< (7k+1)^2$. Hence $\lfloor{\sqrt{(5k)^2+(5k)^2-1}}\rfloor=7k$ for all $k\leq 14$.

\begin{cor}
	If $3\leq k\leq 14$, then $\msc{MR}(\frac{I_{5k}}{5k},\frac{I_{5k}}{5k})=7k=\lfloor{\sqrt{(5k)^2+(5k)^2-1}}\rfloor$. 
\end{cor}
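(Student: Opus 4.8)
The plan is to combine Theorem \ref{thm-CR-7k in 5k} with the K. R. Parthasarathy upper bound, exactly as in the earlier corollaries of this section. Fix $k$ with $3\leq k\leq 14$. First I would invoke Theorem \ref{thm-CR-7k in 5k} to obtain a map $\Phi\in\msc{CP}(\M{5k},\M{5k}; I_{5k},I_{5k})$ that is an extreme point of this set with $CR(\Phi)=7k$. Since the assignment $\Psi\mapsto\frac{1}{5k}\Psi$ is an invertible affine map on $\B{\M{5k},\M{5k}}$ that carries $\msc{CP}(\M{5k},\M{5k}; I_{5k},I_{5k})$ bijectively onto $\msc{CP}(\M{5k},\M{5k}; \frac{I_{5k}}{5k},\frac{I_{5k}}{5k})$ and merely rescales each minimal Kraus family by $\frac{1}{\sqrt{5k}}$ (hence preserves their linear independence and the Choi-rank), the map $\Phi_1:=\frac{1}{5k}\Phi$ is an extreme point of $\msc{CP}(\M{5k},\M{5k}; \frac{I_{5k}}{5k},\frac{I_{5k}}{5k})$ with $CR(\Phi_1)=CR(\Phi)=7k$. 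This yields $\msc{MR}(\frac{I_{5k}}{5k},\frac{I_{5k}}{5k})\geq 7k$.

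For the reverse inequality I would apply the K. R. Parthasarathy bound recalled in Section \ref{pre}, namely $\msc{MR}(A_1,A_2)\leq\lfloor\sqrt{d_1^2+d_2^2-1}\rfloor$, with $d_1=d_2=5k$. The displayed estimate preceding the statement gives $(7k)^2\leq 2(5k)^2-1<(7k+1)^2$ for $k\leq 14$, so $\lfloor\sqrt{(5k)^2+(5k)^2-1}\rfloor=7k$ throughout the stated range. Combining the two inequalities gives $\msc{MR}(\frac{I_{5k}}{5k},\frac{I_{5k}}{5k})=7k=\lfloor\sqrt{(5k)^2+(5k)^2-1}\rfloor$, as required.

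There is essentially no real obstacle here: the corollary is a direct consequence of the preceding theorem and the arithmetic note. The only point meriting a word of care is that rescaling by the positive scalar $\frac{1}{5k}$ genuinely transports extreme points to extreme points and preserves the Choi-rank; this is immediate because it is an affine isomorphism between the two compact convex bodies and multiplies a minimal Kraus family by a nonzero constant, so I would state it in one line rather than belabour it.
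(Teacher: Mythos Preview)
Your proof is correct and follows exactly the same approach as the paper, which simply writes ``Follows from the Theorem \ref{thm-CR-7k in 5k}.'' You have merely spelled out the implicit rescaling and upper-bound steps that the paper leaves to the reader, so there is nothing to add.
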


\begin{proof}
	Follows from the Theorem \ref{thm-CR-7k in 5k}.
\end{proof}

\section{Sharpness of the upper bound for the case \texorpdfstring{$d_1\neq d_2$}{TEXT}}\label{section 5}

In \cite{KS21}, the authors established the sharpness of the upper bound on the rank of extreme points given by K. R. Parthasarathy for the qubit-qutrit system. Specifically, they illustrate an example of an extreme point $\rho\in\msc{C}(\frac{I_{2}}{2},\frac{I_{3}}{3})$ with $\text{rank}(\rho)=3=\lfloor{\sqrt{2^2+3^2-1}}\rfloor$. Equivalently, there exists an extreme point $\Phi\in\msc{CP}(\M{2}, \M{3}; \frac{I_2}{2}, \frac{I_3}{3})$ with $CR(\Phi)=3$. This is now natural to ask whether this holds true in a general context when $d_1\neq d_2$. In this section, we obtain that the upper bound given by K. R. Parthasarathy is indeed sharp for $(d_1,d_2)=(2,d), (3,4)$, where $d\geq 4$. This section also contains an example of an extreme point of $\msc{CP}(\M{d}, \M{d+1}; \frac{I_d}{d}, \frac{I_{d+1}}{d+1})$ with Choi-rank $d+1$ for $d\geq 2$.

\begin{center}
	\item \subsection{$(d_1,d_2)=(2,d), d\geq 4$}
\end{center}

Given $d\geq 4$. In this subsection, we construct an example of an extreme point of the set $\msc{CP}(\M{2},\M{d}; Z, \frac{I_d}{d})$ with Choi-rank $d$, where $Z$ is an invertible state on $\M{2}$ given by
\begin{align}\label{definition of Z}
	 Z=\frac{1}{2}\Matrix{1&\frac{d-3}{d}\\\frac{d-3}{d}&1}.
\end{align}
Since $\lfloor{\sqrt{2^2+d^2-1}}\rfloor=d$, this asserts that the upper bound given by K. R. Parthasarathy is indeed sharp for $(d_1,d_2)=(2,d)$ where $d\geq 4$. We begin with the following lemma.

\begin{lem}\label{lem-CR-d in 2 to d}
	Given $d\geq 4$. Let $\{W_j\}_{j=1}^d\subseteq\M{2\times d}$ be the set of matrices given by
	\begin{align}\label{eq-CR-d in 2 to d}
		\begin{cases}
			W_1&=\ranko{e_1^{(2)}}{e_1^{(d)}}+\ranko{e_2^{(2)}}{e_3^{(d)}}=[e_1^{(2)},0,e_2^{(2)},0,\hdots, 0],\\
			W_2&=\ranko{e_1^{(2)}}{e_2^{(d)}}+\ranko{e_2^{(2)}}{e_1^{(d)}}=[e_2^{(2)},e_1^{(2)},0,0,\hdots, 0],\\
			W_3&=\ranko{e_1^{(2)}}{e_3^{(d)}}+\ranko{e_2^{(2)}}{e_2^{(d)}}=[0,e_2^{(2)},e_1^{(2)},0,\hdots, 0],\\
			W_4&=\ranko{e_1^{(2)}}{e_4^{(d)}}+\ranko{e_2^{(2)}}{e_4^{(d)}}=[0,0,0,e_1^{(2)}+e_2^{(2)},\hdots, 0],\\
			\vdots\\
			W_d&=\ranko{e_1^{(2)}}{e_d^{(d)}}+\ranko{e_2^{(2)}}{e_d^{(d)}}=[0,0,0,0,\hdots, e_1^{(2)}+e_2^{(2)}].
		\end{cases}
	\end{align}
	Then the pair of sets $\{W_i^*W_j:1\leq i,j\leq d\}\subseteq\M{d}$ and $\{W_jW_i^*:1\leq i,j\leq d\}\subseteq\M{2}$ is bi-linearly independent.
\end{lem}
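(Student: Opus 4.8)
The plan is to verify bi-linear independence directly from the definitions by analyzing the two families of products. First I would compute all the products $W_i^*W_j \in \M{d}$ and $W_jW_i^* \in \M{2}$ explicitly. Using the identity $A\ranko{x}{y}B = \ranko{Ax}{B^*y}$ together with $\ranko{x}{y}^* = \ranko{y}{x}$, each $W_i^*W_j$ is a sum of at most four rank-one matrices $\ranko{e_a^{(d)}}{e_b^{(d)}}$ built from the two ``legs'' of $W_i$ and $W_j$, while each $W_jW_i^*$ is a sum of rank-one matrices on $\mbb{C}^2$. The key structural observation is that the matrices $W_4,\dots,W_d$ have a very simple form: $W_k = \ranko{(e_1^{(2)}+e_2^{(2)})}{e_k^{(d)}}$ for $k\ge 4$, so for $k,l\ge 4$ we get $W_k^*W_l = 2\,\ranko{e_k^{(d)}}{e_l^{(d)}} = 2E_{kl}^{(d)}$, which are manifestly linearly independent in $\M{d}$ (they are distinct scaled matrix units). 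That already kills all coefficients $a_{kl}$ with $k,l\ge 4$ using only the first set.

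Next I would handle the mixed and low-index terms. Suppose $\sum_{i,j} a_{ij} W_i^*W_j = 0$ and $\sum_{i,j} a_{ij} W_j W_i^* = 0$. From the first relation, I would read off coefficients of each matrix unit $E_{ab}^{(d)}$: the products $W_i^*W_j$ with at least one index in $\{1,2,3\}$ contribute entries only in the top-left $3\times 3$-ish block plus possibly a few off-diagonal entries coupling $\{1,2,3\}$ with $\{4,\dots,d\}$. Because $W_4,\dots,W_d$ each have their single leg $e_k^{(d)}$ with $k\ge 4$, a product like $W_i^*W_k$ for $i\le 3$, $k\ge 4$ produces matrix units $E_{ak}^{(d)}$ with $a\in\{1,2,3\}$ and $k\ge 4$; these live in positions disjoint from the $3\times 3$ block and from the $E_{kl}$ with $k,l\ge 4$. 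Tracking these isolated positions should force the mixed coefficients $a_{ik}$ (with $i\le 3 < k$) and $a_{ki}$ to vanish, or at least reduce them to a small linear system. What remains is to show the $3\times 3$ sub-problem — the coefficients $a_{ij}$ with $i,j\in\{1,2,3\}$ — is handled by combining the leftover entries from the first relation with the second relation on $\M{2}$; here the matrices $W_1,W_2,W_3$ each have two legs, so $W_jW_i^*$ genuinely uses both the $\mbb{C}^2$-coordinates, and bi-linear (rather than plain linear) independence is essential, mirroring the mechanism already seen in Lemma~\ref{lem-CR-6} and Lemma~\ref{lem-CR-7}.

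The main obstacle I anticipate is the bookkeeping in that residual $3\times 3$ corner: unlike the easy block $k,l\ge 4$, the nine products $W_i^*W_j$ for $i,j\in\{1,2,3\}$ need not be linearly independent on their own in $\M{d}$, and likewise the nine $W_jW_i^*$ need not be independent in $\M{2}$ (indeed $\M{2}$ is only $4$-dimensional, so nine matrices there are necessarily dependent) — it is precisely the \emph{pairing} that must be independent. I would therefore set up the explicit $9$-variable system coming from both relations simultaneously and solve it by hand, exploiting the cyclic-shift pattern $W_1\leftrightarrow(e_1,e_3)$, $W_2\leftrightarrow(e_2,e_1)$, $W_3\leftrightarrow(e_3,e_2)$ in the $\mbb{C}^d$-leg and the corresponding $(e_1^{(2)},e_2^{(2)})$ pattern in the $\mbb{C}^2$-leg. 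As in the appendix lemmas, once the system is written out, it should collapse step by step; I would defer the full computation to the appendix (as the statement of Lemma~\ref{lem-CR-d in 2 to d} is already set up to point there, paralleling Lemmas~\ref{lem-CR-6} and \ref{lem-CR-7}), and in the main text give only the reduction to the block decomposition above, noting that the $d\ge 4$ hypothesis guarantees at least the single ``clean'' index $4$ needed to separate the mixed terms while also ensuring $Z$ in \eqref{definition of Z} is a genuine invertible state.
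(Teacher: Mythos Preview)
Your approach is correct and is essentially the paper's: compute the products, assemble the two matrix equations $\sum a_{ij}W_i^*W_j=0$ and $\sum a_{ij}W_jW_i^*=0$, and solve entrywise (the paper does this inline, not in the appendix---it is Lemmas~\ref{lem-CR-6} and~\ref{lem-CR-7} that are deferred). One small correction: the obstacle you anticipate in the $3\times 3$ corner does not occur---the nine matrices $W_i^*W_j$ for $i,j\le 3$ \emph{are} linearly independent in the top-left $3\times 3$ block (the three cyclic triples $\{E_{11}+E_{33},E_{11}+E_{22},E_{22}+E_{33}\}$, $\{E_{12}+E_{31},E_{12}+E_{23},E_{23}+E_{31}\}$, $\{E_{13}+E_{32},E_{13}+E_{21},E_{21}+E_{32}\}$ each span their own disjoint $3$-dimensional coordinate subspace), so in fact the first relation alone already forces every $a_{ij}=0$ and the $\M{2}$-relation is superfluous here.
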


\begin{proof}
	Let $4\leq r, k\leq d$. We compute

	\footnotesize 
	 	\begin{equation*}
				\begin{aligned}[c]
					W_1^*W_1&=E_{11}+E_{33},\\
					W_1^*W_2&=E_{12}+E_{31},\\
					W_1^*W_3&=E_{13}+E_{32},\\
					W_1^*W_r&=E_{1r}+E_{3r},\\\\
					W_1W_1^*&=F_{11}+F_{22},\\
					W_2W_1^*&=F_{21},\\
					W_3W_1^*&=F_{12},\\
					W_rW_1^*&=0,\\
                \end{aligned}
				\qquad 
				\begin{aligned}[c]
					W_2^*W_1&=E_{21}+E_{13},\\
					W_2^*W_2&=E_{22}+E_{11},\\
					W_2^*W_3&=E_{23}+E_{12},\\
					W_2^*W_r&=E_{2r}+E_{1r},\\\\
					W_1W_2^*&=F_{12},\\
					W_2W_2^*&=F_{11}+F_{22},\\
					W_3W_2^*&=F_{21},\\
					W_rW_2^*&=0,\\
                \end{aligned}
				\qquad
				\begin{aligned}[c]
					W_3^*W_1&=E_{31}+E_{23},\\
					W_3^*W_2&=E_{32}+E_{21},\\
					W_3^*W_3&=E_{33}+E_{22},\\
					W_3^*W_r&=E_{3r}+E_{2r},\\\\
					W_1W_3^*&=F_{21},\\
					W_2W_3^*&=F_{12},\\
					W_3W_3^*&=F_{11}+F_{22},\\
					W_rW_3^*&=0,\\
				\end{aligned}
				\qquad 
				\begin{aligned}[c]
					W_k^*W_1&=E_{k1}+E_{k3},\\
					W_k^*W_2&=E_{k2}+E_{k1},\\
					W_k^*W_3&=E_{k3}+E_{k2},\\
					W_k^*W_r&=2E_{kr},\\\\
					W_1W_k^*&=0,\\
					W_2W_k^*&=0,\\
					W_3W_k^*&=0,\\
					W_rW_k^*&=\delta_{rk}(F_{11}+F_{12}+F_{21}+F_{22})\\
				\end{aligned}
	\end{equation*}
	\normalsize
	 where $F_{ij}:=\ranko{e_i^{(2)}}{e_j^{(2)}}$ for all $1\leq i,j\leq 2$. Let $A=[a_{ij}]\in\M{d}$ such that
	 \begin{align*}
	 	\sum_{i,j=1}^da_{ij}W_i^*W_j=0=\sum_{i,j=1}^da_{ij}W_jW_i^*.
	 \end{align*}

	 A simple calculation yields
	 \begin{align*}
	 	&\Matrix{a_{11}+a_{22}&a_{12}+a_{23}&a_{13}+a_{21}&a_{14}+a_{24}&\cdots&a_{1t}+a_{2t}&\cdots&a_{1d}+a_{2d}\\
	 		a_{21}+a_{32}&a_{22}+a_{33}&a_{23}+a_{31}&a_{24}+a_{34}&\cdots&a_{2t}+a_{3t}&\cdots&a_{2d}+a_{3d}\\
	 		a_{12}+a_{31}&a_{13}+a_{32}&a_{11}+a_{33}&a_{14}+a_{34}&\cdots&a_{1t}+a_{3t}&\cdots&a_{1d}+a_{3d}\\
	 		a_{41}+a_{42}&a_{42}+a_{43}&a_{41}+a_{43}&2a_{44}&\cdots&2a_{4t}&\cdots&2a_{4d}\\
	 		\vdots&\vdots&\vdots&\vdots&\cdots&\vdots&\cdots&\vdots\\
	 		a_{s1}+a_{s2}&a_{s2}+a_{s3}&a_{s1}+a_{s3}&2a_{s4}&\cdots&2a_{st}&\cdots&2a_{sd}\\
	 		\vdots&\vdots&\vdots&\vdots&\cdots&\vdots&\cdots&\vdots\\
	 		a_{d1}+a_{d2}&a_{d2}+a_{d3}&a_{d1}+a_{d3}&2a_{d4}&\cdots&2a_{dt}&\cdots&2a_{dd}}=0
	 	\end{align*}
	 	 and 
	 	 \begin{align*}
	 	{\Matrix{\tr(A)&a_{13}+a_{21}+a_{32}+\sum\limits_{j=4}^da_{jj}\\
	 			a_{12}+a_{23}+a_{31}+\sum\limits_{j=4}^da_{jj}&\tr(A)}=0}.
	 \end{align*}

	 From the above two matrices, we deduce that $a_{ij}=0$ for all $1\leq i,j\leq d$. This shows that the pair of sets $\{W_i^*W_j:1\leq i,j\leq d\}\subseteq\M{d}$ and $\{W_jW_i^*:1\leq i,j\leq d\}\subseteq\M{2}$ is bi-linearly independent.
\end{proof}

\begin{thm}\label{thm- CR-d- 2 to d}
	Let $d\geq 4$. There exists an extreme point of the set $\msc{CP}(\M{2},\M{d}; Z, \frac{I_{d}}{d})$ with Choi-rank $d$, where $Z$ is given in \eqref{definition of Z}.
\end{thm}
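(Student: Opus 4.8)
The plan is to exhibit the required extreme point explicitly. Define the linear map
$\Phi=\frac{1}{2d}\sum_{j=1}^{d}\mathrm{Ad}_{W_j}\colon\M{2}\to\M{d}$,
where $W_1,\dots,W_d\in\M{2\times d}$ are precisely the matrices of Lemma \ref{lem-CR-d in 2 to d}. By Corollary \ref{cor-choi-kraus-rep} the map $\Phi$ is CP, with a candidate Kraus decomposition consisting of $d$ operators.

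First I would verify that $\Phi\in\msc{CP}[\M{2},\M{d}; Z,\frac{I_d}{d}]$, i.e.\ that $\Phi(I_2)=\frac{I_d}{d}$ and $\Phi^*(I_d)=Z^{\T}=Z$ — the last equality holding because $Z$ is real symmetric, which also reconciles the transpose convention $A_1=\rho_1^{\T}$ in the definition of $\msc{CP}[\cdot,\cdot;A_1,A_2]$. Both identities drop out of the products $W_i^*W_j$ and $W_jW_i^*$ already tabulated inside the proof of Lemma \ref{lem-CR-d in 2 to d}: summing the diagonal terms gives $\sum_{j=1}^{d}W_j^*W_j=2I_d$, while $\sum_{j=1}^{d}W_jW_j^*=\Matrix{d&d-3\\ d-3&d}$, so multiplying by $\frac{1}{2d}$ produces $\frac{I_d}{d}$ and $Z$, respectively. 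I would also record in passing that $Z$ is invertible for $d\ge4$, since $\det Z=\frac14\big(1-(\tfrac{d-3}{d})^2\big)=\frac{6d-9}{4d^2}>0$.

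With membership in hand, extremality is immediate from Lemma \ref{lem-CR-d in 2 to d}: the pair of sets $\{W_i^*W_j\}_{i,j=1}^{d}\subseteq\M{d}$ and $\{W_jW_i^*\}_{i,j=1}^{d}\subseteq\M{2}$ is bi-linearly independent, so Theorem \ref{thm-extr-condition-for doubly} yields that $\Phi$ is an extreme point of $\msc{CP}[\M{2},\M{d}; Z,\frac{I_d}{d}]$. Moreover, bi-linear independence forces $\{W_j\}_{j=1}^{d}$ itself to be linearly independent (the remark following Theorem \ref{thm-extr-condition-for doubly}), so the decomposition above is minimal and $CR(\Phi)=d$, which completes the proof. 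One may additionally write $\Phi$ out as an explicit $d\times d$ matrix-valued map of $X=[x_{ij}]\in\M{2}$, but this is not needed for the argument.

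There is essentially no obstacle remaining at this stage: the one genuinely delicate combinatorial step — verifying bi-linear independence of the $2d^2$ products $W_i^*W_j$ and $W_jW_i^*$ — has been isolated into Lemma \ref{lem-CR-d in 2 to d}. The only care required in the present theorem is the bookkeeping of the two operator sums and the harmless transpose convention noted above.
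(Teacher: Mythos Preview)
Your proof is correct and follows essentially the same approach as the paper: define $\Phi=\frac{1}{2d}\sum_{j=1}^{d}\mathrm{Ad}_{W_j}$ with the $W_j$ of Lemma~\ref{lem-CR-d in 2 to d}, verify $\Phi(I_2)=\frac{I_d}{d}$ and $\Phi^*(I_d)=Z$ from the operator sums, and then invoke the bi-linear independence of Lemma~\ref{lem-CR-d in 2 to d} together with Theorem~\ref{thm-extr-condition-for doubly} to conclude extremality with $CR(\Phi)=d$. Your additional remarks on the invertibility of $Z$ and the explicit justification that the Kraus decomposition is minimal are correct side observations not spelled out in the paper; the only minor wrinkle is that by the definition of $\msc{CP}[\cdot,\cdot;A_1,A_2]$ one needs $\Phi^*(I_d)=Z$ directly (not $Z^{\T}$), though as you note this is moot since $Z$ is symmetric.
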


\begin{proof}
	Let $\Phi:\M{2}\to\M{d}$ be a CP map defined by
	\begin{align*}
		\Phi(X)
		&=\frac{1}{2d}\sum_{j=1}^{d}W_j^*XW_j\\
		&=\frac{1}{2d}\Matrix{\text{tr}(X)&x_{21}&x_{12}&0\\
			x_{12}&\text{tr}(X)&x_{21}&0\\
			x_{21}&x_{12}&\text{tr}(X)&0\\
			0&0&0&\left[\text{tr}(X)+x_{12}+x_{21}\right]I_{d-3}}
	\end{align*}
	for all $X=[x_{ij}]\in\M{2}$, where $W_j$'s are given by \eqref{eq-CR-d in 2 to d}. Check that
	\begin{align*}
		\Phi(I_2)=\frac{1}{2d}\sum_{j=1}^dW_j^*W_j=\frac{I_{d}}{d},\; \text{ and }\; \Phi^*(I_d)=\frac{1}{2d}\sum_{j=1}^dW_jW_j^*=Z.
	\end{align*}
	Therefore $\Phi\in\msc{CP}(\M{2},\M{d}; Z, \frac{I_{d}}{d})$. By Lemma \ref{lem-CR-d in 2 to d}, it follows that the pair of sets $\{W_i^*W_j\}_{i,j=1}^d\subseteq\M{d}$ and $\{W_jW_i^*\}_{i,j=1}^d\subseteq\M{2}$ is bi-linearly independent. Hence, by Theorem \ref{thm-extr-condition-for doubly}, we conclude that $\Phi$ is an extreme point of the set $\msc{CP}(\M{2},\M{d}; Z, \frac{I_{d}}{d})$ with $CR(\Phi)=d$. 
\end{proof}

\begin{cor}
	$\msc{MR}(Z, \frac{I_{d}}{d})=d=\lfloor{\sqrt{2^2+d^2-1}}\rfloor$ for $d\geq 4$.
\end{cor}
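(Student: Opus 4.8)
The plan is to obtain this as an immediate consequence of Theorem \ref{thm- CR-d- 2 to d} combined with the upper bound of K. R. Parthasarathy. First I would invoke the general inequality $\msc{MR}(A_1, A_2) \leq \lfloor \sqrt{d_1^2 + d_2^2 - 1} \rfloor$, valid for any states $A_1 \in \M{d_1}$ and $A_2 \in \M{d_2}$; this is the refinement in \cite{krp05} of the bound that follows from Theorem \ref{thm-extr-condition-for doubly}. Specializing to $d_1 = 2$, $d_2 = d$, $A_1 = Z$, $A_2 = \frac{I_{d}}{d}$ yields $\msc{MR}(Z, \frac{I_{d}}{d}) \leq \lfloor \sqrt{d^2 + 3} \rfloor$.

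Next I would record the elementary floor computation $\lfloor \sqrt{d^2 + 3} \rfloor = d$ for every $d \geq 2$, which holds because $d^2 \leq d^2 + 3 < d^2 + 2d + 1 = (d+1)^2$ as soon as $2d + 1 > 3$. Hence $\msc{MR}(Z, \frac{I_{d}}{d}) \leq d$. For the reverse inequality, Theorem \ref{thm- CR-d- 2 to d} exhibits, for each $d \geq 4$, an explicit $\Phi \in \msc{CP}(\M{2}, \M{d}; Z, \frac{I_{d}}{d})$ that is an extreme point of this set with $CR(\Phi) = d$, and the definition of $\msc{MR}(Z, \frac{I_{d}}{d})$ as a supremum of Choi-ranks of extreme points then forces $\msc{MR}(Z, \frac{I_{d}}{d}) \geq d$. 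Combining the two bounds gives the claimed equality. I would note in passing that $Z$ is indeed an invertible state on $\M{2}$ when $d \geq 4$ — its trace is $1$ and its eigenvalues $\frac{1}{2}\bigl(1 \pm \frac{d-3}{d}\bigr)$ are strictly positive — so the situation genuinely falls within the framework of Proposition \ref{prop-C-J-extreme to extreme}; the restriction $d \geq 4$ in the statement is inherited from the construction in Lemma \ref{lem-CR-d in 2 to d}, the cases $d = 2, 3$ being either trivial or already covered by \cite{KS21}.

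There is essentially no obstacle in this corollary: all of the substantive work — verifying that the $W_j$ of \eqref{eq-CR-d in 2 to d} produce a bi-linearly independent pair $\{W_i^*W_j\}$, $\{W_jW_i^*\}$, and hence an extreme point of Choi-rank $d$ — has already been carried out in Lemma \ref{lem-CR-d in 2 to d} and Theorem \ref{thm- CR-d- 2 to d}. The only point requiring any care is the floor estimate above, and even that is routine arithmetic.
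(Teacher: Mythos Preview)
Your proposal is correct and follows essentially the same approach as the paper's own proof, which simply invokes Theorem \ref{thm- CR-d- 2 to d} together with the floor identity $\lfloor\sqrt{2^2+d^2-1}\rfloor=d$. You merely make explicit the upper bound from \cite{krp05} and the arithmetic behind the floor computation, which the paper leaves implicit.
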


\begin{proof}
	It follows from the Theroerm \ref{thm- CR-d- 2 to d} since $\lfloor{\sqrt{2^2+d^2-1}}\rfloor=d$.
\end{proof}

\begin{center}
	\item \subsection{$(d_1,d_2)=(d,d+1), d\geq 3$.}
\end{center}

In this subsection, we present an example of an extreme point of the set $\msc{CP}(\M{d}, \M{d+1}; \frac{I_d}{d}, \frac{I_{d+1}}{d+1})$ with Choi-rank $d+1$ for $d\geq 2$. Note that the expression $\lfloor{\sqrt{d^2+(d+1)^2-1}}\rfloor$ is equal to $d+1$ if $d\leq 3$, and exceeds $d+1$ if $d\geq 4$. So this example for $d=3$ asserts that the upper bound given by K. R. Parthasarathy is indeed sharp for $(d_1, d_2)=(3,4)$. We begin with the following lemma.

\begin{lem}\label{lem-CR-4 in 3 to 4}
	Let $\{W_j\}_{j=1}^4\subseteq\M{3\times 4}$ be the set of matrices given by
	\begin{align}\label{eq-CR-4 in 3 to 4}
			\begin{cases}
				W_1=\ranko{e_1^{(3)}}{e_1^{(4)}}+\ranko{e_2^{(3)}}{e_2^{(4)}}+\ranko{e_3^{(3)}}{e_3^{(4)}}=[e_1^{(3)},e_2^{(3)},e_3^{(3)},0],\\
				W_2=\ranko{e_1^{(3)}}{e_2^{(4)}}+\ranko{e_2^{(3)}}{e_3^{(4)}}+\ranko{e_3^{(3)}}{e_4^{(4)}}=[0,e_1^{(3)},e_2^{(3)},e_3^{(3)}],\\
				W_3=\ranko{e_3^{(3)}}{e_1^{(4)}}+\ranko{e_1^{(3)}}{e_3^{(4)}}+\ranko{e_2^{(3)}}{e_4^{(4)}}=[e_3^{(3)},0,e_1^{(3)},e_2^{(3)}],\\
				W_4=\ranko{e_2^{(3)}}{e_1^{(4)}}+\ranko{e_3^{(3)}}{e_2^{(4)}}+\ranko{e_1^{(3)}}{e_4^{(4)}}=[e_2^{(3)},e_3^{(3)},0,e_1^{(3)}].
			\end{cases}
    \end{align}
	Then the pair of sets $\{W_i^*W_j:1\leq i,j\leq 4\}\subseteq\M{4}$ and $\{W_jW_i^*:1\leq i,j\leq 4\}\subseteq\M{3}$ is bi-linearly independent.
\end{lem}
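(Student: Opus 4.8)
The plan is to imitate the proof of Lemma~\ref{lem-CR-d in 2 to d}: turn bi-linear independence into a finite linear system and solve it. Suppose $a_{ij}\in\mbb{C}$, $1\le i,j\le 4$, satisfy $\sum_{i,j}a_{ij}W_i^*W_j=0$ in $\M{4}$ and $\sum_{i,j}a_{ij}W_jW_i^*=0$ in $\M{3}$; the goal is to force every $a_{ij}=0$. First I would record all the relevant products. Since each column of every $W_k$ is either a standard basis vector of $\mbb{C}^{3}$ or zero, and each row is either a standard basis vector of $\mbb{C}^{4}$ or zero, every entry of $W_i^*W_j$ and of $W_jW_i^*$ is $0$ or $1$: the $(c,c')$ entry of $W_i^*W_j$ equals $1$ exactly when the $c$-th column of $W_i$ coincides with the $c'$-th column of $W_j$ and is nonzero, and the $(r,r')$ entry of $W_jW_i^*$ equals $1$ exactly when the (unique) nonzero entries in the $r$-th row of $W_j$ and the $r'$-th row of $W_i$ sit in the same column. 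Reading off~\eqref{eq-CR-4 in 3 to 4}, each $W_i^*W_j$ is thus a $4\times 4$ matrix obtained from a $3\times 3$ permutation matrix by inserting one zero row and one zero column, and each $W_jW_i^*$ is a $3\times 3$ partial permutation matrix; writing $E_{cc'}=\ranko{e_c^{(4)}}{e_{c'}^{(4)}}$ and $F_{rr'}=\ranko{e_r^{(3)}}{e_{r'}^{(3)}}$, I would tabulate all $16$ matrices $W_i^*W_j$ and all $16$ matrices $W_jW_i^*$ explicitly, exactly as in the display preceding the proof of Lemma~\ref{lem-CR-d in 2 to d}.

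Next I would substitute these into the two vanishing relations and equate coefficients of the matrix units. Comparing the diagonal entries of $\sum_{i,j}a_{ij}W_i^*W_j=0$ yields $a_{11}+a_{33}+a_{44}=a_{11}+a_{22}+a_{44}=a_{11}+a_{22}+a_{33}=a_{22}+a_{33}+a_{44}=0$, which forces $a_{11}=a_{22}=a_{33}=a_{44}$ and hence all four to vanish. For the off-diagonal positions the key point (again as in Lemma~\ref{lem-CR-d in 2 to d}) is that the scalar equation coming from a position $E_{cc'}$ of the $\M{4}$-relation and the ``matching'' one coming from a position $F_{rr'}$ of the $\M{3}$-relation differ by exactly one extra unknown, so subtracting the two kills that unknown; iterating this bookkeeping over the positions $E_{cc'}$, $F_{rr'}$ peels off the twelve off-diagonal coefficients $a_{ij}$ one at a time until all of them are zero. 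The casework is streamlined by the underlying cyclic $\mbb{Z}_4$-symmetry of the construction~\eqref{eq-CR-4 in 3 to 4}, which reduces the number of genuinely distinct equations to a handful. Once every $a_{ij}=0$, the pair of sets $\{W_i^*W_j\}_{i,j=1}^4\subseteq\M{4}$ and $\{W_jW_i^*\}_{i,j=1}^4\subseteq\M{3}$ is bi-linearly independent by definition.

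The only real work is computing the thirty-two products correctly and then organising the resulting linear system; there is no conceptual obstacle, so the main thing to watch is the indexing/adjoint conventions and the bookkeeping in the coefficient comparison. This routine nature is why the argument can be carried out inline rather than deferred to the appendix.
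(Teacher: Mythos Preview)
Your proposal is correct and follows essentially the same route as the paper: compute all thirty-two products $W_i^*W_j$ and $W_jW_i^*$, write $\sum_{i,j}a_{ij}W_i^*W_j=0$ and $\sum_{i,j}a_{ij}W_jW_i^*=0$ as two explicit matrix equations, and read off $a_{ij}=0$. Your structural remarks (each $\M{3}$-equation is a four-term cyclic sum and each matching $\M{4}$-equation drops exactly one term, so subtraction isolates a single $a_{ij}$) make the deduction cleaner than the paper's bare ``from the above two matrices, we deduce that $a_{ij}=0$,'' but the underlying computation is identical.
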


\begin{proof}
	
	We compute

	\footnotesize
	\begin{align*}
		\begin{aligned}[c]
			W_1^*W_1&=E_{11}+E_{22}+E_{33},\\
			W_1^*W_2&=E_{12}+E_{23}+E_{34},\\
			W_1^*W_3&=E_{13}+E_{24}+E_{31},\\
			W_1^*W_4&=E_{14}+E_{21}+E_{32},\\
			W_2^*W_1&=E_{21}+E_{32}+E_{43},\\
			W_2^*W_2&=E_{22}+E_{33}+E_{44},\\
			W_2^*W_3&=E_{23}+E_{34}+E_{41},\\
			W_2^*W_4&=E_{24}+E_{31}+E_{42},\\
		\end{aligned}
		\qquad
		\begin{aligned}[c]
			W_3^*W_1&=E_{13}+E_{31}+E_{42},\\
			W_3^*W_2&=E_{14}+E_{32}+E_{43},\\
			W_3^*W_3&=E_{11}+E_{33}+E_{44},\\
			W_3^*W_4&=E_{12}+E_{34}+E_{41},\\
			W_4^*W_1&=E_{12}+E_{23}+E_{41},\\
			W_4^*W_2&=E_{13}+E_{24}+E_{42},\\
			W_4^*W_3&=E_{14}+E_{21}+E_{43},\\
			W_4^*W_4&=E_{11}+E_{22}+E_{44},\\
		\end{aligned}
		\qquad
		 \begin{aligned}[c]
			W_1W_1^*&=I_3,\\
			W_1W_2^*&=F_{21}+F_{32},\\
			W_1W_3^*&=F_{13}+F_{31},\\
			W_1W_4^*&=F_{12}+F_{23},\\
			W_2W_1^*&=F_{12}+F_{23},\\
			W_2W_2^*&=I_3,\\
			W_2W_3^*&=F_{21}+F_{32},\\
			W_2W_4^*&=F_{13}+F_{31},\\
		\end{aligned}
		\qquad
		\begin{aligned}[c]
			W_3W_1^*&=F_{13}+F_{31},\\
			W_3W_2^*&=F_{12}+F_{23},\\
			W_3W_3^*&=I_3,\\
			W_3W_4^*&=F_{21}+F_{32},\\
			W_4W_1^*&=F_{21}+F_{32},\\
			W_4W_2^*&=F_{13}+F_{31},\\
			W_4W_3^*&=F_{12}+F_{23},\\
			W_4W_4^*&=I_3.
		\end{aligned}
	\end{align*}
	\normalsize
	where $F_{ij}:=\ranko{e_i^{(3)}}{e_j^{(3)}}$ for all $1\leq i,j\leq 3$. Let $A=[a_{ij}]\in\M{4}$ such that
	\begin{align*}
		\sum_{i,j=1}^4a_{ij}W_i^*W_j=0=\sum_{i,j=1}^4a_{ij}W_jW_i^*.
	\end{align*}

	A simple calculation yields
	\begin{align*}
		\Matrix{a_{11}+a_{33}+a_{44}&a_{12}+a_{34}+a_{41}&a_{13}+a_{31}+a_{42}&a_{14}+a_{32}+a_{43}\\a_{14}+a_{21}+a_{43}&a_{11}+a_{22}+a_{44}&a_{12}+a_{23}+a_{41}&a_{13}+a_{24}+a_{42}\\a_{13}+a_{24}+a_{31}&a_{14}+a_{21}+a_{32}&a_{11}+a_{22}+a_{33}&a_{12}+a_{23}+a_{34}\\a_{23}+a_{34}+a_{41}&a_{24}+a_{31}+a_{42}&a_{21}+a_{32}+a_{43}&a_{22}+a_{33}+a_{44}}=0
	\end{align*}
	and
	\begin{align*}
		\Matrix{\tr(A)&a_{12}+a_{23}+a_{34}+a_{41}&a_{13}+a_{24}+a_{31}+a_{42}\\a_{14}+a_{21}+a_{32}+a_{43}&\tr(A)&a_{12}+a_{23}+a_{34}+a_{41}\\a_{13}+a_{24}+a_{31}+a_{42}&a_{14}+a_{21}+a_{32}+a_{43}&\tr(A)}=0.
	\end{align*}

	From the above two matrices, we deduce that $a_{ij}=0$ for all $1\leq i,j\leq 4$. This shows that the pair of sets $\{W_i^*W_j:1\leq i,j\leq 4\}\subseteq\M{4}$ and $\{W_jW_i^*:1\leq i,j\leq 4\}\subseteq\M{3}$ is bi-linearly independent.
\end{proof}

\begin{thm}\label{thm-CR 4- from 3 to 4}
	There exists an extreme point of the set $\msc{CP}(\M{3},\M{4}; \frac{I_{3}}{3},\frac{I_{4}}{4})$ with Choi-rank $4$.
\end{thm}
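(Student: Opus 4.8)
The plan is to exhibit the required extreme point explicitly, using the four matrices $\{W_j\}_{j=1}^{4}\subseteq\M{3\times 4}$ from Lemma \ref{lem-CR-4 in 3 to 4}. Define the linear map $\Phi:\M{3}\to\M{4}$ by
\begin{align*}
	\Phi(X)&=\frac{1}{12}\sum_{j=1}^{4}W_j^*XW_j\\
	&=\frac{1}{12}\Matrix{\tr(X)&x_{12}+x_{23}&x_{13}+x_{31}&x_{21}+x_{32}\\ x_{21}+x_{32}&\tr(X)&x_{12}+x_{23}&x_{13}+x_{31}\\ x_{13}+x_{31}&x_{21}+x_{32}&\tr(X)&x_{12}+x_{23}\\ x_{12}+x_{23}&x_{13}+x_{31}&x_{21}+x_{32}&\tr(X)}
\end{align*}
for all $X=[x_{ij}]\in\M{3}$, where the displayed matrix is obtained from a direct computation using \eqref{eq-CR-4 in 3 to 4}. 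As a sum of maps of the form $\mathrm{Ad}_{W_j}$, $\Phi$ is CP by Corollary \ref{cor-choi-kraus-rep}.

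First I would verify the two marginal conditions. Summing the diagonal products listed in the proof of Lemma \ref{lem-CR-4 in 3 to 4} gives $\sum_{j=1}^{4}W_j^*W_j=3I_4$ and $\sum_{j=1}^{4}W_jW_j^*=4I_3$. Hence $\Phi(I_3)=\tfrac{1}{12}\cdot 3I_4=\tfrac{I_4}{4}$ and $\Phi^*(I_4)=\tfrac{1}{12}\cdot 4I_3=\tfrac{I_3}{3}$, so that $\Phi\in\msc{CP}(\M{3},\M{4};\tfrac{I_3}{3},\tfrac{I_4}{4})$; and since $\bigl(\tfrac{I_3}{3}\bigr)^{\T}=\tfrac{I_3}{3}$, Proposition \ref{prop-C-J-extreme to extreme} shows that $\msc{J}(\Phi)$ is a genuine state in $\msc{C}(\tfrac{I_3}{3},\tfrac{I_4}{4})$. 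Note that the single constant $\tfrac{1}{12}$ works for both partial traces because $\sum_jW_j^*W_j$ and $\sum_jW_jW_j^*$ are scalar multiples of $I_4$ and $I_3$ sharing the common trace $\sum_j\tr(W_j^*W_j)$.

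Finally, to upgrade $\Phi$ to an extreme point I would invoke Lemma \ref{lem-CR-4 in 3 to 4}, which says that the pair $\{W_i^*W_j\}_{i,j=1}^{4}\subseteq\M{4}$ and $\{W_jW_i^*\}_{i,j=1}^{4}\subseteq\M{3}$ is bi-linearly independent. Theorem \ref{thm-extr-condition-for doubly} then immediately gives that $\Phi$ is an extreme point of $\msc{CP}(\M{3},\M{4};\tfrac{I_3}{3},\tfrac{I_4}{4})$, and hence, by Proposition \ref{prop-C-J-extreme to extreme}, $\msc{J}(\Phi)$ is an extreme point of $\msc{C}(\tfrac{I_3}{3},\tfrac{I_4}{4})$. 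Moreover, as noted in the remark following Theorem \ref{thm-extr-condition-for doubly}, bi-linear independence of that pair forces $\{W_j\}_{j=1}^{4}$ to be linearly independent, so $\Phi=\sum_{j=1}^{4}\mathrm{Ad}_{W_j}$ is already a minimal Kraus decomposition and $CR(\Phi)=4$. Since $\lfloor\sqrt{3^2+4^2-1}\rfloor=4$, this also confirms sharpness for $(d_1,d_2)=(3,4)$. I do not expect any genuine obstacle at this stage: the only substantive content — bi-linear independence of the sixteen products $W_i^*W_j$ together with the sixteen products $W_jW_i^*$, equivalently that the two explicit coefficient systems appearing in its proof (an identity in $\M{4}$ and an identity in $\M{3}$) force all $a_{ij}=0$ — is exactly what Lemma \ref{lem-CR-4 in 3 to 4} already delivers, and everything remaining is routine bookkeeping around that lemma and the normalization.
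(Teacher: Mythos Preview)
Your proof is correct and follows essentially the same approach as the paper: you construct the same map $\Phi=\tfrac{1}{12}\sum_{j=1}^{4}\mathrm{Ad}_{W_j}$, verify the two marginal identities $\sum_j W_j^*W_j=3I_4$ and $\sum_j W_jW_j^*=4I_3$, and then invoke Lemma~\ref{lem-CR-4 in 3 to 4} together with Theorem~\ref{thm-extr-condition-for doubly} to obtain extremality with $CR(\Phi)=4$. Your extra remarks (the Choi--Jamio\l kowski translation via Proposition~\ref{prop-C-J-extreme to extreme}, the use of the remark after Theorem~\ref{thm-extr-condition-for doubly} to justify linear independence of the $W_j$, and the sharpness observation) are correct but go slightly beyond what the paper records.
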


\begin{proof}
	Let $\Phi:\M{3}\to\M{4}$ be a CP map defined by
	\begin{align*}
		\Phi(X)
		&=\frac{1}{12}\sum_{j=1}^{4}W_j^*XW_j\\
		&=\frac{1}{12}\Matrix{\tr(X)&x_{12}+x_{23}&x_{13}+x_{31}&x_{21}+x_{32}\\x_{21}+x_{32}&\tr(X)&x_{23}+x_{12}&x_{13}+x_{31}\\x_{31}+x_{13}&x_{32}+x_{21}&\tr(X)&x_{12}+x_{23}\\x_{12}+x_{23}&x_{13}+x_{31}&x_{21}+x_{32}&\tr(X)}
	\end{align*}
	for all $X=[x_{ij}]\in\M{3}$, where $W_j$'s are given by \eqref{eq-CR-4 in 3 to 4}. Check that
	\begin{align*}
		\Phi(I_3)=\frac{1}{12}\sum_{j=1}^4W_j^*W_j=\frac{I_{4}}{4},\; \text{ and }\; \Phi^*(I_4)=\frac{1}{12}\sum_{j=1}^4W_jW_j^*=\frac{I_3}{3}.
	\end{align*}
	Therefore $\Phi\in\msc{CP}(\M{3},\M{4}; \frac{I_{3}}{3},\frac{I_{4}}{4})$. By Lemma \ref{lem-CR-4 in 3 to 4}, it follows that the pair of sets $\{W_i^*W_j\}_{i,j=1}^4\subseteq\M{4}$ and $\{W_jW_i^*\}_{i,j=1}^4\subseteq\M{3}$ is bi-linearly independent. Hence, by Theorem \ref{thm-extr-condition-for doubly}, we conclude that $\Phi$ is an extreme point of the set $\msc{CP}(\M{3},\M{4}; \frac{I_{3}}{3},\frac{I_{4}}{4})$ with  $CR(\Phi)=4$.
\end{proof}

\begin{cor}
	$\msc{MR}(\frac{I_{3}}{3},\frac{I_{4}}{4})=4=\lfloor{\sqrt{3^2+4^2-1}}\rfloor$.
\end{cor}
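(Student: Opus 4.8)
The plan is to sandwich $\msc{MR}(\frac{I_3}{3},\frac{I_4}{4})$ between the Parthasarathy upper bound and the explicit extreme point furnished by Theorem~\ref{thm-CR 4- from 3 to 4}. First I would invoke the bound recalled in the paragraph following Theorem~\ref{thm-extr-condition-for doubly}: for any $A_j\in\M{d_j}$, $j=1,2$, one has $\msc{MR}(A_1,A_2)\le\lfloor\sqrt{d_1^2+d_2^2-1}\rfloor$. Specializing to $d_1=3$, $d_2=4$ gives $\msc{MR}(\frac{I_3}{3},\frac{I_4}{4})\le\lfloor\sqrt{9+16-1}\rfloor=\lfloor\sqrt{24}\rfloor=4$, where the last equality uses $4^2=16\le 24<25=5^2$.

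For the matching lower bound, Theorem~\ref{thm-CR 4- from 3 to 4} produces an extreme point $\Phi$ of the set $\msc{CP}(\M{3},\M{4};\frac{I_3}{3},\frac{I_4}{4})=\msc{CP}[\M{3},\M{4};\frac{I_3}{3},\frac{I_4}{4}]$ with $CR(\Phi)=4$. Since $\msc{MR}(\frac{I_3}{3},\frac{I_4}{4})$ is by definition the supremum of $CR(\Psi)$ over all extreme points $\Psi$ of precisely this set, this $\Phi$ witnesses $\msc{MR}(\frac{I_3}{3},\frac{I_4}{4})\ge 4$. Combining the two inequalities yields $\msc{MR}(\frac{I_3}{3},\frac{I_4}{4})=4=\lfloor\sqrt{3^2+4^2-1}\rfloor$, as claimed.

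There is essentially no obstacle here: the corollary is a bookkeeping consequence of earlier results, and the only points to verify are the elementary computation $\lfloor\sqrt{24}\rfloor=4$ and the observation that the map built in Theorem~\ref{thm-CR 4- from 3 to 4} is an extreme point of exactly the set whose extreme-point Choi-ranks define $\msc{MR}(\frac{I_3}{3},\frac{I_4}{4})$. All the analytic content --- the bi-linear independence of $\{W_i^*W_j\}_{i,j=1}^4$ and $\{W_jW_i^*\}_{i,j=1}^4$ --- was already discharged in Lemma~\ref{lem-CR-4 in 3 to 4}.
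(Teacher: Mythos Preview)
Your proposal is correct and is essentially the same approach as the paper's one-line proof ``Follows from the Theorem~\ref{thm-CR 4- from 3 to 4}'': you have simply made explicit the sandwich between Parthasarathy's general upper bound $\lfloor\sqrt{d_1^2+d_2^2-1}\rfloor$ and the Choi-rank~$4$ extreme point of Theorem~\ref{thm-CR 4- from 3 to 4}, together with the arithmetic $\lfloor\sqrt{24}\rfloor=4$.
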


\begin{proof}
	Follows from the Theorem \ref{thm-CR 4- from 3 to 4}.
\end{proof}

\begin{lem}\label{lem-CR-d+1 in d to d+1}
	Given $d\geq 2$. Let $\{W_j\}_{j=1}^{d+1}\subseteq\M{d\times (d+1)}$ be the set of matrices given by
	\begin{align}\label{eq-CR-d+1 in d to d+1}
			\begin{cases}
				W_1=[e_1^{(d)},e_2^{(d)},\cdots,e_d^{(d)},0],\\
				W_2=[0,e_1^{(d)},e_2^{(d)},\cdots,e_d^{(d)}],\\
				W_3=[e_d^{(d)},0,e_1^{(d)},\cdots,e_{(d-1)}^{(d)}],\\
				\vdots\\
				W_{d+1}=[e_2^{(d)},e_3^{(d)},\cdots,e_d^{(d)},0,e_1^{(d)}].
			\end{cases}
	\end{align}
	Then the pair of sets $\{W_i^*W_j:1\leq i,j\leq d+1\}\subseteq\M{d+1}$ and $\{W_jW_i^*:1\leq i,j\leq d+1\}\subseteq\M{d}$ is bi-linearly independent.
\end{lem}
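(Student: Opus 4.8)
The plan is to prove the (stronger) fact that the family $\{W_i^{*}W_j:1\le i,j\le d+1\}$ is already linearly independent in $\M{d+1}$. Since it then consists of $(d+1)^2$ vectors in a space of dimension $(d+1)^2$, it is in fact a basis, and the asserted bi-linear independence of the pair follows at once from the remark recorded after the definition of bi-linear independence. (If one prefers the fully explicit style of the proof of Lemma~\ref{lem-CR-4 in 3 to 4}, the computation below still produces both matrix identities, and it shows the second of them to be redundant.) The preliminary step is a uniform description of the $W_j$: write $[x]$ for the representative of $x$ in $\{1,\dots,d+1\}$ modulo $d+1$ and put $h_j:=[\,j-1\,]$ (so $h_1=d+1$). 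Then $W_j$ is the $d\times(d+1)$ matrix whose $h_j$-th column is zero and whose remaining columns, read cyclically starting just after column $h_j$, are $e_1^{(d)},e_2^{(d)},\dots,e_d^{(d)}$; equivalently $W_j=\sum_{m=1}^{d}\ranko{e_m^{(d)}}{e_{[h_j+m]}^{(d+1)}}$. In particular $W_j$ is a coisometry, $W_jW_j^{*}=I_d$, and $W_j^{*}W_j=I_{d+1}-E_{h_jh_j}^{(d+1)}$; since $j\mapsto h_j$ is a bijection, $\sum_{j}E_{h_jh_j}^{(d+1)}=I_{d+1}$.

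Using $h_j-h_i\equiv j-i\pmod{d+1}$, one reads off from this description that
\begin{align*}
	W_i^{*}W_j&=\sum_{m=1}^{d}E_{[h_i+m],[h_j+m]}^{(d+1)}=S_{h_j-h_i}-E_{h_ih_j}^{(d+1)},\\
	W_jW_i^{*}&=\sum_{\substack{1\le m\le d\\ [m+h_j-h_i]\le d}}E_{m,[m+h_j-h_i]}^{(d)},
\end{align*}
where $S_\delta\in\M{d+1}$ is the permutation matrix with $(S_\delta)_{pq}=1$ exactly when $q-p\equiv\delta\pmod{d+1}$. Thus $W_i^{*}W_j$ is a full ``broken diagonal'' of $\M{d+1}$ with the single entry in position $(h_i,h_j)$ removed, and $W_jW_i^{*}$ is the portion of the corresponding broken diagonal of $\M{d}$ that survives the truncation from $d+1$ to $d$ indices. (For $d=3$ these reduce to the tables in the proof of Lemma~\ref{lem-CR-4 in 3 to 4}.)

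Now take $A=[a_{ij}]\in\M{d+1}$ with $\sum_{i,j}a_{ij}W_i^{*}W_j=0$, and for $\delta$ modulo $d+1$ let $c_\delta:=\sum_{i=1}^{d+1}a_{i,[i+\delta]}$ denote the sum of the entries of $A$ along the broken diagonal of offset $\delta$. Using $h_i=p\iff i=[p+1]$, the $(p,q)$-entry of $\sum_{i,j}a_{ij}W_i^{*}W_j$ equals $c_{[q-p]}-a_{[p+1],[q+1]}$; setting all entries to $0$ therefore gives $a_{ij}=c_{[j-i]}$ for all $i,j$, i.e.\ $a_{ij}$ depends only on the residue of $j-i$ modulo $d+1$. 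Feeding this back into the definition of $c_\delta$ gives $c_\delta=\sum_{i=1}^{d+1}a_{i,[i+\delta]}=\sum_{i=1}^{d+1}c_\delta=(d+1)c_\delta$, so $d\,c_\delta=0$ and hence $c_\delta=0$ for every $\delta$; therefore $a_{ij}=0$ for all $i,j$. This establishes the linear independence of $\{W_i^{*}W_j\}$, and the bi-linear independence of the pair $\{W_i^{*}W_j\}$, $\{W_jW_i^{*}\}$ follows. (An analogous reading of $\sum_{i,j}a_{ij}W_jW_i^{*}=0$ only returns $c_\delta=0$ for all $\delta$, which is strictly weaker — this is why the second identity is not needed.)

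The argument is essentially mechanical once the two displayed product formulas are in hand; the only place demanding care is the modular bookkeeping. Concretely, one must treat the wrap-around $h_1=[\,0\,]=d+1$ correctly, and verify that as $(p,q)$ ranges over $\{1,\dots,d+1\}^2$ the substitution $(i,j)=([p+1],[q+1])$ is a bijection of $\{1,\dots,d+1\}^2$ onto itself under which $q-p\equiv j-i\pmod{d+1}$, so that the entrywise equations really do collapse to $a_{ij}=c_{[j-i]}$ with no exceptional cases. With those identifications made, there is no further obstacle.
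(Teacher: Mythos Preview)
Your proof is correct. The paper itself gives no detailed argument here; it simply says the proof is ``in similar lines'' to Lemma~\ref{lem-CR-4 in 3 to 4}, i.e.\ the $d=3$ case, where both matrix identities $\sum a_{ij}W_i^{*}W_j=0$ and $\sum a_{ij}W_jW_i^{*}=0$ are written out explicitly and solved together. Your approach is genuinely different and sharper: by exploiting the cyclic-shift description $W_i^{*}W_j=S_{j-i}-E_{h_ih_j}^{(d+1)}$ you establish the strictly stronger fact that $\{W_i^{*}W_j\}_{i,j=1}^{d+1}$ is linearly independent (hence a basis of $\M{d+1}$), so bi-linear independence follows immediately from the remark after the definition and the second family $\{W_jW_i^{*}\}$ is never needed. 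This also explains \emph{why} the direct computation in Lemma~\ref{lem-CR-4 in 3 to 4} succeeds, and shows that its use of the $3\times3$ system coming from $\sum a_{ij}W_jW_i^{*}=0$ was in fact redundant (indeed, $W_jW_i^{*}$ depends only on $j-i\bmod(d+1)$, so that family could never be linearly independent on its own). The price is a bit of modular bookkeeping, which you handle correctly; the benefit is a uniform argument for all $d\ge 2$ that avoids case-by-case matrix computations and yields the stronger basis statement.
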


\begin{proof}
	The proof is in similar lines to the proof of the Lemma \ref{lem-CR-4 in 3 to 4}.
\end{proof}

\begin{thm}\label{thm-CR d+1-d to d+1}
	Given $d\geq 2$. There exists an extreme point of the set $\msc{CP}(\M{d},\M{d+1}; \frac{I_{d}}{d},\frac{I_{d+1}}{d+1})$ with Choi-rank $d+1$.
\end{thm}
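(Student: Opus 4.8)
The plan is to follow the blueprint of Theorem~\ref{thm-CR 4- from 3 to 4}, of which the statement here is the $d=3$ instance, now using the cyclically shifted family \eqref{eq-CR-d+1 in d to d+1}. First I would set $\Phi\colon\M{d}\to\M{d+1}$ to be
\[
  \Phi=\frac{1}{d(d+1)}\sum_{j=1}^{d+1}\mathrm{Ad}_{W_j},
\]
with $W_1,\dots,W_{d+1}\in\M{d\times(d+1)}$ as in \eqref{eq-CR-d+1 in d to d+1}; by Corollary~\ref{cor-choi-kraus-rep} this is a CP map, and one could, if desired, record its action on a general $X=[x_{ij}]\in\M{d}$ in closed form exactly as in the $d=3$ case, though that explicit formula is not needed for the argument.

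Next I would verify the two marginal identities that put $\Phi$ in $\msc{CP}(\M{d},\M{d+1};\frac{I_d}{d},\frac{I_{d+1}}{d+1})$. Read as a linear map $\mathbb{C}^{d+1}\to\mathbb{C}^{d}$, each $W_j$ carries $d$ of the $d+1$ standard basis vectors bijectively onto $e_1^{(d)},\dots,e_d^{(d)}$ and annihilates the remaining one; the shifts in \eqref{eq-CR-d+1 in d to d+1} are arranged so that, as $j$ runs over $1,\dots,d+1$, the annihilated index runs over all of $\{1,\dots,d+1\}$. Hence $W_jW_j^{*}=I_d$ for every $j$, so $\sum_{j}W_jW_j^{*}=(d+1)I_d$, while $W_j^{*}W_j$ is the orthogonal projection onto the $d$ surviving coordinates, so $\sum_{j}W_j^{*}W_j=d\,I_{d+1}$. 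Therefore $\Phi(I_d)=\frac{I_{d+1}}{d+1}$ and $\Phi^{*}(I_{d+1})=\frac{I_d}{d}$, as required.

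Finally I would invoke Lemma~\ref{lem-CR-d+1 in d to d+1}: the pair of sets $\{W_i^{*}W_j\}_{i,j=1}^{d+1}\subseteq\M{d+1}$ and $\{W_jW_i^{*}\}_{i,j=1}^{d+1}\subseteq\M{d}$ is bi-linearly independent. By the remark following Theorem~\ref{thm-extr-condition-for doubly} this already forces $\{W_j\}_{j=1}^{d+1}$ to be linearly independent, so $\sum_j\mathrm{Ad}_{W_j}$ is a minimal Kraus decomposition and $CR(\Phi)=d+1$; and then Theorem~\ref{thm-extr-condition-for doubly} itself, applied with the same bi-linear independence, shows that $\Phi$ is an extreme point of $\msc{CP}(\M{d},\M{d+1};\frac{I_d}{d},\frac{I_{d+1}}{d+1})$.

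The only genuinely laborious ingredient is Lemma~\ref{lem-CR-d+1 in d to d+1}, and that is where I expect the real work to sit: one tabulates every $W_i^{*}W_j$ as a sum of matrix units $E_{pq}^{(d+1)}$ and every $W_jW_i^{*}$ as a sum of rank-one operators $\ranko{e_p^{(d)}}{e_q^{(d)}}$, rewrites the two conditions $\sum_{i,j}a_{ij}W_i^{*}W_j=0$ and $\sum_{i,j}a_{ij}W_jW_i^{*}=0$ as explicit matrix equations in the unknowns $a_{ij}$, and then reads off that their only common solution is $a_{ij}\equiv 0$. The bookkeeping is bulkier than in Lemma~\ref{lem-CR-4 in 3 to 4} because of the cyclic shifts, but the supports of all the matrix units that occur are completely transparent from \eqref{eq-CR-d+1 in d to d+1}, so the argument is routine once organized.
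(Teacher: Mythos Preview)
Your proposal is correct and follows essentially the same route as the paper: define $\Phi=\frac{1}{d(d+1)}\sum_{j=1}^{d+1}\mathrm{Ad}_{W_j}$ with the cyclically shifted $W_j$'s of \eqref{eq-CR-d+1 in d to d+1}, check the two marginals, then invoke Lemma~\ref{lem-CR-d+1 in d to d+1} together with Theorem~\ref{thm-extr-condition-for doubly}. You actually supply more detail than the paper does (the paper simply writes ``Check that $\Phi\in\msc{CP}(\M{d},\M{d+1};\frac{I_d}{d},\frac{I_{d+1}}{d+1})$''), and your explicit observation that $W_jW_j^{*}=I_d$ while $\sum_j W_j^{*}W_j=d\,I_{d+1}$ is exactly what that check amounts to.
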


\begin{proof}
	Let $\Phi:\M{d}\to\M{d+1}$ be the map defined by $	\Phi=\frac{1}{d^2+d}\sum_{j=1}^{d+1}\mathrm{Ad}_{W_j}$, where $W_j$'s are given by \eqref{eq-CR-d+1 in d to d+1} for $1\leq j\leq d+1$. Check that $\Phi\in\msc{CP}(\M{d},\M{d+1}; \frac{I_{d}}{d},\frac{I_{d+1}}{d+1})$. By Lemma \ref{lem-CR-d+1 in d to d+1}, it follows that the pair of sets $\{W_i^*W_j\}_{i,j=1}^{d+1}\subseteq\M{d+1}$ and $\{W_jW_i\}_{i,j=1}^{d+1}\subseteq\M{d}$ is bi-linearly independent. Hence, by Theorem \ref{thm-extr-condition-for doubly}, we conclude that $\Phi$ is an extreme point of the set $\msc{CP}(\M{d},\M{d+1}; \frac{I_{d}}{d},\frac{I_{d+1}}{d+1})$ with  $CR(\Phi)=d+1$.
\end{proof}

\begin{cor}
	If $d\geq 2$, then $\msc{MR}(\frac{I_{d}}{d},\frac{I_{d+1}}{d+1})\geq d+1$. Moreover, $\msc{MR}(\frac{I_{d}}{d},\frac{I_{d+1}}{d+1})=d+1$ if $d=2,3$.
\end{cor}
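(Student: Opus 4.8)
The plan is to deduce both assertions directly from Theorem~\ref{thm-CR d+1-d to d+1} together with the Parthasarathy upper bound recalled in Section~\ref{pre}; no new construction is needed, since the substantive work — exhibiting the Kraus operators \eqref{eq-CR-d+1 in d to d+1} and checking the bi-linear independence of $\{W_i^*W_j\}_{i,j=1}^{d+1}$ and $\{W_jW_i^*\}_{i,j=1}^{d+1}$ via Lemma~\ref{lem-CR-d+1 in d to d+1} — has already been carried out.

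For the lower bound I would argue as follows. Theorem~\ref{thm-CR d+1-d to d+1} produces, for every $d\geq 2$, a map $\Phi\in\msc{CP}(\M{d},\M{d+1};\frac{I_d}{d},\frac{I_{d+1}}{d+1})$ which is an extreme point of this set and satisfies $CR(\Phi)=d+1$. Since $\msc{MR}(\frac{I_d}{d},\frac{I_{d+1}}{d+1})$ is by definition the supremum of $CR(\Psi)$ over all extreme points $\Psi$ of $\msc{CP}(\M{d},\M{d+1};\frac{I_d}{d},\frac{I_{d+1}}{d+1})$, this one map already forces $\msc{MR}(\frac{I_d}{d},\frac{I_{d+1}}{d+1})\geq d+1$, proving the first claim for all $d\geq 2$.

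For the equality when $d\in\{2,3\}$ I would invoke the bound $\msc{MR}(A_1,A_2)\leq\lfloor\sqrt{d_1^2+d_2^2-1}\rfloor$ of K.~R.~Parthasarathy, applied with $d_1=d$ and $d_2=d+1$. For $d=2$ one computes $\lfloor\sqrt{4+9-1}\rfloor=\lfloor\sqrt{12}\rfloor=3=d+1$, and for $d=3$ one gets $\lfloor\sqrt{9+16-1}\rfloor=\lfloor\sqrt{24}\rfloor=4=d+1$; combined with the lower bound from the previous paragraph this yields $\msc{MR}(\frac{I_d}{d},\frac{I_{d+1}}{d+1})=d+1$ for $d=2,3$. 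The restriction to $d=2,3$ is exactly because $\lfloor\sqrt{d^2+(d+1)^2-1}\rfloor\geq d+2$ for $d\geq 4$ (noted just before Theorem~\ref{thm-CR d+1-d to d+1}), so the sandwiching argument no longer closes the gap there. Given that all the analytic content sits in Lemma~\ref{lem-CR-d+1 in d to d+1}, I do not expect any genuine obstacle here; the only point requiring a moment's care is the arithmetic of the two floor evaluations, which I would simply display as above.
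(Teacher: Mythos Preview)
Your proposal is correct and matches the paper's own proof essentially line for line: the lower bound comes directly from Theorem~\ref{thm-CR d+1-d to d+1}, and the equality for $d=2,3$ follows by sandwiching against Parthasarathy's bound $\lfloor\sqrt{d^2+(d+1)^2-1}\rfloor=d+1$. The paper's proof is simply a terser statement of exactly this argument.
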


\begin{proof}
	The first part is immediate from the Theorem \ref{thm-CR d+1-d to d+1}. The second part also follows because $\msc{MR}(\frac{I_{d}}{d},\frac{I_{d+1}}{d+1})\leq \lfloor{\sqrt{d^2+(d+1)^2-1}}\rfloor=d+1$ if $d=2,3$.
\end{proof}

We wish to conclude this paper with the following mentioned. Given $d_1=d_2\geq 3$ or $d_1\neq d_2$ with $d_1,d_2\geq 2$, this would be interesting to study whether it is possible to give an algorithm to construct an extreme point that attain the upper bound given by K. R. Parthasarathy. We are not able to answer this at this stage. We leave this for furthur investigation in the future.

\section{Acknowledgement}

The authors are extremely grateful to B. V. Rajarama Bhat for introducing this area of research. The research of the first and second named author is supported by Institute Postdoctoral Fellowship of Indian Institute of Technology Bombay.

\section{Appendix}\label{appendix}

\begin{lem}\label{lem-CR-6-appendix}
	Let $\{W_j:1\leq j\leq 6\}\subseteq\M{5}$ be the set of matrices given by 
    \begin{align}\label{eq-CR-6-appendix}
		\begin{cases}
			W_1&=E_{13}+E_{32}=[0,e_3,e_1,0,0],\\
			W_2&=E_{24}+E_{43}=[0,0,e_4,e_2,0],\\
			W_3&=\sqrt{2}E_{35}+E_{54}=[0,0,0,e_5,\sqrt{2}e_3],\\
			W_4&=E_{14}+E_{42}=[0,e_4,0,e_1,0],\\
			W_5&=E_{15}+E_{41}+E_{53}=[e_4,0,e_5,0,e_1],\\
			W_6&=\sqrt{2}E_{21}+E_{52}=[\sqrt{2}e_2,e_5,0,0,0].
		\end{cases}
	\end{align}
	Then the pair of sets $\{W_i^*W_j:1\leq i,j\leq 6\}$ and $\{W_jW_i^*:1\leq i,j\leq 6\}$ is bi-linearly independent.
\end{lem}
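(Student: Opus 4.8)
The plan is to verify bi-linear independence directly by computing all $36$ products $W_i^*W_j$ in $\M{5}$ and all $36$ products $W_jW_i^*$ in $\M{5}$, then solving the resulting linear system. First I would record, for each pair $(i,j)$, the matrix $W_i^*W_j$ using the identity $E_{ab}^*E_{cd}=\delta_{bc}E_{ad}$ applied termwise to the two-term (or three-term) expansions in \eqref{eq-CR-6-appendix}; the structure of the supports (each $W_k$ is supported on a small set of matrix units chosen so that most products vanish) means the vast majority of the $W_i^*W_j$ are either zero or a single matrix unit, with only a few being sums of two units, and similarly for the $W_jW_i^*$. I would then suppose $A=[a_{ij}]\in\M{6}$ satisfies $\sum_{i,j=1}^{6}a_{ij}W_i^*W_j=0$ and $\sum_{i,j=1}^{6}a_{ij}W_jW_i^*=0$ simultaneously, and read off the coefficient of each matrix unit $E_{pq}^{(5)}$ in both equations.

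The key step is to extract a large collection of immediate relations $a_{ij}=0$ from the first equation: whenever $W_i^*W_j$ is a matrix unit that appears in no other product $W_{i'}^*W_{j'}$, its coefficient equation forces $a_{ij}=0$ outright. This should dispose of most of the $36$ entries of $A$, leaving only a handful (the indices $(i,j)$ for which $W_i^*W_j$ collides in support with some $W_{i'}^*W_{j'}$). For those surviving entries one gets a small linear system coupling them; then I would bring in the second equation $\sum a_{ij}W_jW_i^*=0$, whose matrix-unit coefficients give additional relations among exactly these remaining unknowns. The Kraus operators in \eqref{eq-CR-6-appendix} have been engineered (the coefficients $\sqrt{2}$ on $E_{35}$, $E_{21}$ and the three-term $W_5$) precisely so that the two systems taken together are nondegenerate, so the combined system forces all remaining $a_{ij}=0$. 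Concluding $A=0$ gives bi-linear independence by definition.

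The main obstacle is purely bookkeeping: correctly enumerating which products $W_i^*W_j$ (and $W_jW_i^*$) share support, since it is exactly at those overlaps that the two linear systems must be played off against each other, and an error there would either miss a relation or falsely claim independence. I expect the $\tr$-type combinations to appear on the diagonal of $\sum a_{ij}W_jW_i^*$ (each $W_jW_j^*$ contributes a diagonal projection, so the $(p,p)$ entries involve sums $\sum_j a_{jj}$-like expressions), mirroring the pattern seen in Lemma~\ref{lem-CR-4 in 3 to 4} and Lemma~\ref{lem-CR-d in 2 to d}; once the off-diagonal relations have pinned down the off-diagonal $a_{ij}$ and most diagonal ones, these trace relations finish off the rest. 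The argument is elementary throughout; no step requires anything beyond linear algebra over $\mbb{C}$ and the multiplication rules for matrix units.
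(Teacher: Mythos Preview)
Your proposal is correct and mirrors the paper's own argument almost exactly: the authors likewise tabulate all $W_i^*W_j$ and $W_jW_i^*$, set $\sum a_{ij}W_i^*W_j=0=\sum a_{ij}W_jW_i^*$, and read off from the resulting two $5\times5$ matrix equations that every off-diagonal $a_{ij}$ vanishes, after which the diagonal entries $a_{11},\ldots,a_{66}$ are killed by observing that the induced $10\times6$ linear system has full rank. The only slight discrepancy is organizational---in the paper none of the diagonal $a_{jj}$ are eliminated until the very last step, where all six are handled at once via the rank computation---but your outline is otherwise the same strategy.
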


\begin{proof}
We compute
    \footnotesize 
		\begin{align*}
			\begin{aligned}[c]
				W_1^*W_1&=E_{33}+E_{22},\\
				W_1^*W_2&=0,\\
				W_1^*W_3&=\sqrt{2}E_{25},\\
				W_1^*W_4&=E_{34},\\
				W_1^*W_5&=E_{35},\\
				W_1^*W_6&=0,\\
				W_2^*W_1&=0,\\
				W_2^*W_2&=E_{33}+E_{44},\\
				W_2^*W_3&=0,\\
				W_2^*W_4&=E_{32},\\
				W_2^*W_5&=E_{31},\\
				W_2^*W_6&=\sqrt{2}E_{41},\\
				W_3^*W_1&=\sqrt{2}E_{52},\\
				W_3^*W_2&=0,\\
				W_3^*W_3&=E_{44}+2E_{55},\\
				W_3^*W_4&=0,\\
				W_3^*W_5&=E_{43},\\
				W_3^*W_6&=E_{42},
			\end{aligned}
			\qquad 
			\begin{aligned}[c]
				W_4^*W_1&=E_{43},\\
				W_4^*W_2&=E_{23},\\
				W_4^*W_3&=0,\\
				W_4^*W_4&=E_{22}+E_{44},\\
				W_4^*W_5&=E_{21}+E_{45},\\
				W_4^*W_6&=0,\\
				W_5^*W_1&=E_{53},\\
				W_5^*W_2&=E_{13},\\
				W_5^*W_3&=E_{34},\\
				W_5^*W_4&=E_{12}+E_{54},\\
				W_5^*W_5&=E_{11}+E_{33}+E_{55},\\
				W_5^*W_6&=E_{32},\\
				W_6^*W_1&=0,\\
				W_6^*W_2&=\sqrt{2}E_{14},\\
				W_6^*W_3&=E_{24},\\
				W_6^*W_4&=0,\\
				W_6^*W_5&=E_{23},\\
				W_6^*W_6&=2E_{11}+E_{22},\\
			\end{aligned}
			\qquad
			\begin{aligned}[c]
				   W_1W_1^*&=E_{11}+E_{33},\\
					W_1W_2^*&=E_{14},\\
					W_1W_3^*&=0,\\
					W_1W_4^*&=E_{34},\\
					W_1W_5^*&=E_{15},\\
					W_1W_6^*&=E_{35},\\
					W_2W_1^*&=E_{41},\\
					W_2W_2^*&=E_{22}+E_{44},\\
					W_2W_3^*&=E_{25},\\
					W_2W_4^*&=E_{21},\\
					W_2W_5^*&=E_{45},\\
					W_2W_6^*&=0,\\
					W_3W_1^*&=0,\\
					W_3W_2^*&=E_{52},\\
					W_3W_3^*&=2E_{33}+E_{55},\\
					W_3W_4^*&=E_{51},\\
					W_3W_5^*&=\sqrt{2}E_{31},\\
					W_3W_6^*&=0,\\
			\end{aligned}
			\qquad 
			\begin{aligned}[c]
					W_4W_1^*&=E_{43},\\
					W_4W_2^*&=E_{12},\\
					W_4W_3^*&=E_{15},\\
					W_4W_4^*&=E_{11}+E_{44},\\
					W_4W_5^*&=0,\\
					W_4W_6^*&=E_{45},\\
					W_5W_1^*&=E_{51},\\
					W_5W_2^*&=E_{54},\\
					W_5W_3^*&=\sqrt{2}E_{13},\\
					W_5W_4^*&=0\\
					W_5W_5^*&=E_{11}+E_{44}+E_{55},\\
					W_5W_6^*&=\sqrt{2}E_{42},\\
					W_6W_1^*&=E_{53},\\
					W_6W_2^*&=0,\\
					W_6W_3^*&=0,\\
					W_6W_4^*&=E_{54},\\
					W_6W_5^*&=\sqrt{2}E_{24},\\
					W_6W_6^*&=2E_{22}+E_{55}.
				\end{aligned}
			\end{align*}
	\normalsize
	
	Let $A=[a_{ij}]\in\M{5}$ such that
	\begin{align*}
		\sum_{i,j=1}^6a_{ij}W_i^*W_j=0=\sum_{i,j=1}^6a_{ij}W_jW_i^*.
	\end{align*}

	A simple calculation yields
    \small
	\begin{align*}
		\Matrix{a_{55}+2a_{66}&a_{54}&a_{52}&\sqrt{2}a_{62}&0\\
			a_{45}&a_{11}+a_{44}+a_{66}&a_{42}+a_{65}&a_{63}&\sqrt{2}a_{13}\\
			a_{25}&a_{24}+a_{56}&a_{11}+a_{22}+a_{55}&a_{14}+a_{53}&a_{15}\\
			\sqrt{2}a_{26}&a_{36}&a_{35}+a_{41}&a_{22}+a_{33}+a_{44}&a_{45}\\
			0&\sqrt{2}a_{31}&a_{51}&a_{54}&2a_{33}+a_{55}}=0
	\end{align*}
    \normalsize
	and 
    \small
	\begin{align*}
		\Matrix{a_{11}+a_{44}+a_{55}&a_{24}&\sqrt{2}a_{35}&a_{21}&a_{34}+a_{51}\\
			a_{42}&a_{22}+2a_{66}&0&\sqrt{2}a_{56}&a_{32}\\
			\sqrt{2}a_{53}&0&a_{11}+2a_{33}&a_{41}&a_{61}\\
			a_{12}&\sqrt{2}a_{65}&a_{14}&a_{22}+a_{44}+a_{55}&a_{52}+a_{64}\\
			a_{15}+a_{43}&a_{23}&a_{16}&a_{25}+a_{46}&a_{33}+a_{55}+a_{66}}=0.
	\end{align*}
    \normalsize

	This implies that $a_{ij}=0$ for all $1\leq i\neq j\leq 6$. Furthermore, we have $Bx=0$, where $x=(a_{11},a_{22},\hdots, a_{66})^{\T}\in\mbb{C}^6$ and $B\in\M{10\times 6}$ given by
    \small
	\begin{align*}
		B^{\T}:=\Matrix{0&1&1&0&0&1&0&1&0&0\\0&0&1&1&0&0&1&0&1&0\\0&0&0&1&2&0&0&2&0&1\\0&1&0&1&0&1&0&0&1&0\\1&0&1&0&1&1&0&0&1&1\\2&1&0&0&0&0&2&0&0&1}.
	\end{align*}
    \normalsize
    
    Check that rank$(B)=6$. So the null space of $B$ is trivial. Therefore, $x=0$ and hence $a_{ij}=0$ for all $1\leq i,j\leq 6$. This shows that the pair of sets $\{W_i^*W_j:1\leq i,j\leq 6\}$ and $\{W_jW_i^*:1\leq i,j\leq 6\}$ is bi-linearly independent.
\end{proof}

 \begin{lem}\label{lem-CR-7-appendix}
	Let $\{W_j:1\leq j\leq 7\}\subseteq\M{5}$ be the set of matrices given by
	\begin{align}\label{eq-CR-7-appendix}
		\begin{cases}
			W_1&=\sqrt{2}E_{13}+E_{32}+E_{54}=[0,e_3,\sqrt{2}e_1,e_5,0],\\
			W_2&=E_{24}+E_{43}=[0,0,e_4,e_2,0],\\
			W_3&=\sqrt{2}E_{35}+\sqrt{3}E_{54}=[0,0,0,\sqrt{3}e_5,\sqrt{2}e_3],\\
			W_4&=E_{14}+E_{42}=[0,e_4,0,e_1,0]\\
			W_5&=E_{15}+2E_{41}+E_{53}=[2e_4,0,e_5,0,e_1],\\
			W_6&=\sqrt{2}E_{21}+E_{52}=[\sqrt{2}e_2,e_5,0,0,0],\\
			W_7&=\sqrt{2}E_{13}+\sqrt{3}iE_{32}+\sqrt{3}E_{25}=[0,\sqrt{3}ie_3,\sqrt{2}e_1,0,\sqrt{3}e_2].
		\end{cases}
	\end{align}
	Then the pair of sets $\{W_i^*W_j:1\leq i,j\leq 7\}$ and $\{W_jW_i^*: 1\leq i,j\leq 7\}$ is bi-linearly independent
\end{lem}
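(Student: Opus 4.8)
The proof runs exactly parallel to that of Lemma~\ref{lem-CR-6-appendix}, with one extra Kraus operator to carry along. Observe first that neither $\{W_i^*W_j\}_{i,j=1}^7\subseteq\M5$ nor $\{W_jW_i^*\}_{i,j=1}^7\subseteq\M5$ can be linearly independent on its own, since each family has $49$ elements while $\dim\M5=25$; so one genuinely has to exploit the \emph{joint} bi-linear condition, as the definition of bi-linear independence already anticipates.

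The plan is as follows. First I would tabulate all $49$ products $W_i^*W_j$ and all $49$ products $W_jW_i^*$ for $1\le i,j\le 7$, using $A\ranko{x}{y}B=\ranko{Ax}{B^*y}$ together with the orthonormality of $\{e_k^{(5)}\}$. Since $W_7$ carries the imaginary unit, one works with $W_7^*=\sqrt2E_{31}-\sqrt3iE_{23}+\sqrt3E_{52}$ and keeps careful track of the resulting $\pm\sqrt3i$ coefficients and their conjugates. Each product comes out as a short sum of matrix units $E_{kl}$.

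Next, assume $A=[a_{ij}]\in\M7$ satisfies $\sum_{i,j=1}^7a_{ij}W_i^*W_j=0$ and $\sum_{i,j=1}^7a_{ij}W_jW_i^*=0$, and expand each sum into a $5\times5$ matrix all of whose entries must vanish. The off-diagonal matrix positions of these two matrices yield linear relations among the off-diagonal unknowns $a_{ij}$ ($i\ne j$) only; reading these off, and combining the relations coming from both $5\times5$ matrices, forces $a_{ij}=0$ for all $i\ne j$, just as in Lemma~\ref{lem-CR-6-appendix}. What then remains are the relations carried by the diagonal matrix units $E_{kk}$, $1\le k\le 5$, in each of the two sums; these involve only the vector $x=(a_{11},a_{22},\dots,a_{77})^{\T}\in\mbb{C}^7$ and assemble into a system $Bx=0$ for an explicit integer matrix $B$ of size at most $10\times7$ (integer, since the surds $\sqrt2,\sqrt3$ appearing in the $W_j$ get squared in the diagonal of $W_i^*W_j$ and $W_jW_i^*$). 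The last step is to verify $\operatorname{rank}(B)=7$, so that $\ker{B}=\{0\}$; then $x=0$, whence $A=0$, which is precisely the bi-linear independence asserted.

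The only real obstacle is bookkeeping: producing the full $7\times7$ table of products correctly — especially the $W_7$-entries, where a sign slip in a conjugated $i$ would be easy to make — and then forming $B$ and confirming its rank. Conceptually nothing new is needed beyond Lemma~\ref{lem-CR-6-appendix}; indeed the extra operator $W_7$ with its imaginary entries is exactly what keeps the diagonal system non-degenerate, so one should check that its contributions genuinely enlarge the column space of $B$ to full rank.
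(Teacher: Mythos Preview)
Your plan mirrors the paper's argument in outline, but it contains a genuine oversight that would make the bookkeeping fail exactly where you say ``just as in Lemma~\ref{lem-CR-6-appendix}''. The cross products between $W_1$ and $W_7$ are \emph{diagonal}:
\[
W_1^*W_7=\sqrt{3}iE_{22}+2E_{33},\quad W_7^*W_1=-\sqrt{3}iE_{22}+2E_{33},\quad W_1W_7^*=2E_{11}-\sqrt{3}iE_{33},\quad W_7W_1^*=2E_{11}+\sqrt{3}iE_{33}.
\]
Consequently the off-diagonal entries of the two $5\times5$ constraint matrices do \emph{not} see $a_{17}$ or $a_{71}$ at all, and your assertion that the off-diagonal positions ``force $a_{ij}=0$ for all $i\ne j$'' is false: $a_{17}$ and $a_{71}$ survive and feed into the diagonal system. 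This is the one place where the $d=5$, rank-$7$ case genuinely departs from Lemma~\ref{lem-CR-6-appendix}.

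Two downstream consequences of this: the residual unknown vector is $x=(a_{11},\dots,a_{77},a_{17},a_{71})^{\T}\in\mbb{C}^9$, not $\mbb{C}^7$, and the coefficient matrix $B$ is $10\times 9$ with entries $\pm\sqrt{3}i$ in the columns corresponding to $a_{17},a_{71}$ (coming from the $\sqrt{3}i$ in $W_1^*W_7$, etc.), so $B$ is not an integer matrix as you predicted. The correct final step is to check $\operatorname{rank}(B)=9$. Once you account for this, your argument is exactly the paper's.
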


\begin{proof}
	We first compute $W_i^*W_j$ for all $1\leq i,j\leq 7$ as listed below:
	
	\footnotesize
	\begin{align*}
		\begin{aligned}[c]
			W_1^*W_1&=E_{22}+2E_{33}+E_{44},\\
			W_1^*W_2&=0,\\
			W_1^*W_3&=\sqrt{2}E_{25}+\sqrt{3}E_{44},\\
			W_1^*W_4&=\sqrt{2}E_{34},\\
			W_1^*W_5&=\sqrt{2}E_{35}+E_{43},\\
			W_1^*W_6&=E_{42},\\
			W_1^*W_7&=\sqrt{3}iE_{22}+2E_{33},\\
		\end{aligned}
		\qquad
		\begin{aligned}[c]
			W_2^*W_1&=0,\\
			W_2^*W_2&=E_{33}+E_{44},\\
			W_2^*W_3&=0,\\
			W_2^*W_4&=E_{32},\\
			W_2^*W_5&=2E_{31},\\
			W_2^*W_6&=\sqrt{2}E_{41},\\
			W_2^*W_7&=\sqrt{3}E_{45},\\
		\end{aligned}
		\qquad 
		\begin{aligned}[c]
			W_3^*W_1&=\sqrt{2}E_{52}+\sqrt{3}E_{44},\\
			W_3^*W_2&=0,\\
			W_3^*W_3&=3E_{44}+2E_{55},\\
			W_3^*W_4&=0,\\
			W_3^*W_5&=\sqrt{3}E_{43},\\
			W_3^*W_6&=\sqrt{3}E_{42},\\
			W_3^*W_7&=\sqrt{6}iE_{52},\\
			\end{aligned}
		\qquad 
		\begin{aligned}[c]
			W_4^*W_1&=\sqrt{2}E_{43},\\
			W_4^*W_2&=E_{23},\\
			W_4^*W_3&=0,\\
			W_4^*W_4&=E_{22}+E_{44}\\
			W_4^*W_5&=2E_{21}+E_{45},\\
			W_4^*W_6&=0,\\
			W_4^*W_7&=\sqrt{2}E_{43},\\
			\end{aligned}
	\end{align*}

	\begin{align*}
		\begin{aligned}[c]
			W_5^*W_1&=\sqrt{2}E_{53}+E_{34},\\
			W_5^*W_2&=2E_{13},\\
			W_5^*W_3&=\sqrt{3}E_{34},\\
			W_5^*W_4&=2E_{12}+E_{54},\\
			W_5^*W_5&=4E_{11}+E_{33}+E_{55},\\
			W_5^*W_6&=E_{32},\\
			W_5^*W_7&=\sqrt{2}E_{53},\\
		\end{aligned}
		\qquad
		\begin{aligned}[c]
			W_6^*W_1&=E_{24},\\
			W_6^*W_2&=\sqrt{2}E_{14},\\
			W_6^*W_3&=\sqrt{3}E_{24},\\
			W_6^*W_4&=0,\\
			W_6^*W_5&=E_{23},\\
			W_6^*W_6&=2E_{11}+E_{22},\\
			W_6^*W_7&=\sqrt{6}E_{15},\\
		\end{aligned}
		\qquad
		\begin{aligned}[c]
			W_7^*W_1&=-\sqrt{3}iE_{22}+2E_{33},\\
			W_7^*W_2&=\sqrt{3}E_{54},\\
			W_7^*W_3&=-\sqrt{6}iE_{25},\\
			W_7^*W_4&=\sqrt{2}E_{34},\\
			W_7^*W_5&=\sqrt{2}E_{35},\\
			W_7^*W_6&=\sqrt{6}E_{51},\\
			W_7^*W_7&=3E_{22}+2E_{33}+3E_{55}.\\
		\end{aligned}
	\end{align*}
	\normalsize

	Next, we compute $W_jW_i^*$ for all $1\leq i,j\leq 7$ as listed below:

	\footnotesize
	\begin{align*}
		\begin{aligned}[c]
			W_1W_1^*&=2E_{11}+E_{33}+E_{55},\\
			W_1W_2^*&=\sqrt{2}E_{14}+E_{52},\\
			W_1W_3^*&=\sqrt{3}E_{55},\\
			W_1W_4^*&=E_{51}+E_{34},\\
			W_1W_5^*&=\sqrt{2}E_{15},\\
			W_1W_6^*&=E_{35},\\
			W_1W_7^*&=2E_{11}-\sqrt{3}iE_{33},\\
		\end{aligned}
		\qquad
		\begin{aligned}[c]
			W_2W_1^*&=E_{25}+\sqrt{2}E_{41},\\
			W_2W_2^*&=E_{22}+E_{44},\\
			W_2W_3^*&=\sqrt{3}E_{25},\\
			W_2W_4^*&=E_{21},\\
			W_2W_5^*&=E_{45},\\
			W_2W_6^*&=0,\\
			W_2W_7^*&=\sqrt{2}E_{41},\\
		\end{aligned}
		\qquad 
		\begin{aligned}[c]
			W_3W_1^*&=\sqrt{3}E_{55},\\
			W_3W_2^*&=\sqrt{3}E_{52},\\
			W_3W_3^*&=2E_{33}+3E_{55},\\
			W_3W_4^*&=\sqrt{3}E_{51},\\
			W_3W_5^*&=\sqrt{2}E_{31},\\
			W_3W_6^*&=0,\\
			W_3W_7^*&=\sqrt{6}E_{32},\\
		\end{aligned}
		\qquad 
		\begin{aligned}[c]
			W_4W_1^*&=E_{15}+E_{43},\\
			W_4W_2^*&=E_{12},\\
			W_4W_3^*&=\sqrt{3}E_{15},\\
			W_4W_4^*&=E_{11}+E_{44},\\
			W_4W_5^*&=0,\\
			W_4W_6^*&=E_{45},\\
			W_4W_7^*&=-\sqrt{3}iE_{43},\\
		\end{aligned}
	\end{align*}

	\begin{align*}
		\begin{aligned}[c]
			W_5W_1^*&=\sqrt{2}E_{51},\\
			W_5W_2^*&=E_{54},\\
			W_5W_3^*&=\sqrt{2}E_{13},\\
			W_5W_4^*&=0,\\
			W_5W_5^*&=E_{11}+4E_{44}+E_{55},\\
			W_5W_6^*&=2\sqrt{2}E_{42},\\
			W_5W_7^*&=\sqrt{3}E_{12}+\sqrt{2}E_{51},\\
		\end{aligned}
		\qquad
		\begin{aligned}[c]
			W_6W_1^*&=E_{53},\\
			W_6W_2^*&=0,\\
			W_6W_3^*&=0,\\
			W_6W_4^*&=E_{54},\\
			W_6W_5^*&=2\sqrt{2}E_{24},\\
			W_6W_6^*&=2E_{22}+E_{55},\\
			W_6W_7^*&=-\sqrt{3}iE_{53},\\
		\end{aligned}
		\qquad
		\begin{aligned}[c]
			W_7W_1^*&=2E_{11}+\sqrt{3}iE_{33},\\
			W_7W_2^*&=\sqrt{2}E_{14},\\
			W_7W_3^*&=\sqrt{6}E_{23},\\
			W_7W_4^*&=\sqrt{3}iE_{34},\\
			W_7W_5^*&=\sqrt{2}E_{15}+\sqrt{3}E_{21},\\
			W_7W_6^*&=\sqrt{3}iE_{35},\\
			W_7W_7^*&=2E_{11}+3E_{22}+3E_{33}.\\
		\end{aligned}
	\end{align*}
	\normalsize

	Let $A=[a_{ij}]\in\M{5}$ such that $\sum_{i,j=1}^7a_{ij}W_i^*W_j=0=\sum_{i,j=1}^7a_{ij}W_jW_i^*$. A simple calculation yields

	\resizebox{.98\linewidth}{!}{
		\begin{minipage}{\linewidth}
			\begin{align*}
				&\Matrix{4a_{55}+2a_{66}&2a_{54}&2a_{52}&\sqrt{2}a_{62}&\sqrt{6}a_{67}\\
					2a_{45}&\sqrt{3}i(a_{17}-a_{71})+a_{11}+a_{44}+a_{66}+3a_{77}&a_{42}+a_{65}&a_{61}+\sqrt{3}a_{63}&\sqrt{2}a_{13}-\sqrt{6}ia_{73}\\
					2a_{25}&a_{24}+a_{56}&2(a_{17}+a_{71})+2a_{11}+a_{22}+a_{55}+2a_{77}&\sqrt{2}a_{14}+a_{51}+\sqrt{3}a_{53}+\sqrt{2}a_{74}&\sqrt{2}(a_{15}+a_{75})\\
					\sqrt{2}a_{26}&a_{16}+\sqrt{3}a_{36}&a_{15}+\sqrt{3}a_{35}+\sqrt{2}(a_{41}+a_{47})&\sqrt{3}(a_{13}+a_{31})+a_{11}+a_{22}+3a_{33}+a_{44}&\sqrt{3}a_{27}+a_{45}\\	\sqrt{6}a_{76}&\sqrt{2}a_{31}+\sqrt{6}ia_{37}&\sqrt{2}(a_{51}+a_{57})&a_{54}+\sqrt{3}a_{72}&2a_{33}+a_{55}+3a_{77}}=0,\\
				&\Matrix{2(a_{17}+a_{71})+2a_{11}+a_{44}+a_{55}+2a_{77}&a_{24}+\sqrt{3}a_{75}&\sqrt{2}a_{35}&\sqrt{2}(a_{21}+a_{27})&a_{14}+\sqrt{3}a_{34}+\sqrt{2}a_{51}+a_{57}\\
					a_{42}+\sqrt{3}a_{57}&a_{22}+2a_{66}+3a_{77}&\sqrt{6}a_{37}&2\sqrt{2}a_{56}&a_{12}+\sqrt{3}a_{32}\\
					\sqrt{2}a_{53}&\sqrt{6}a_{73}&\sqrt{3}i(a_{17}-a_{71})+a_{11}+2a_{33}+3a_{77}&a_{41}+\sqrt{3}ia_{47}&a_{61}+\sqrt{3}ia_{67}\\
					\sqrt{2}(a_{12}+a_{72})&2\sqrt{2}a_{65}&a_{14}-\sqrt{3}ia_{74}&a_{22}+a_{44}+4a_{55}&a_{52}+a_{64}\\
					\sqrt{2}(a_{15}+a_{75})+a_{41}+\sqrt{3}a_{43}&a_{21}+\sqrt{3}a_{23}&a_{16}-\sqrt{3}ia_{76}&a_{25}+a_{46}&\sqrt{3}(a_{13}+a_{31})+a_{11}+3a_{33}+a_{55}+a_{66}}=0.
			\end{align*}
		\end{minipage}
	}
	
	This implies that $a_{ij}=0$ for all $1\leq i\neq j\leq 7$ except $a_{17}$ and $a_{71}$. Furthermore, we have $Bx=0$, where $x=(a_{11},a_{22},\hdots,a_{77},a_{17},a_{71})^{\T}\in\mbb{C}^9$ and $B\in\M{10\times 9}$ is given by
	\footnotesize
	\begin{align*}
		B:=\Matrix{2&1&0&0&1&0&2&2&2\\2&0&0&1&1&0&2&2&2\\1&0&0&1&0&1&3&\sqrt{3}i&-\sqrt{3}i\\1&0&2&0&0&0&3&\sqrt{3}i&-\sqrt{3}i\\1&1&3&1&0&0&0&0&0\\1&0&3&0&1&1&0&0&0\\0&1&0&1&4&0&0&0&0\\0&1&0&0&0&2&3&0&0\\0&0&2&0&1&0&3&0&0\\0&0&0&0&4&2&0&0&0}.
	\end{align*}
	\normalsize

	Check that rank$(B)=9$. So the null space of $B$ is trivial. Therefore, $x=0$ and hence $a_{ij}=0$ for all $1\leq i,j\leq 7$. This shows that the pair of sets $\{W_i^*W_j:1\leq i,j\leq 7\}$ and $\{W_jW_i^*:1\leq i,j\leq 7\}$ is bi-linearly independent. 
\end{proof}

\bibliographystyle{acm}
\bibliography{Bibliography}	

 \end{document}